%% file: main.tex
\documentclass[12pt]{amsart}
\usepackage{amsthm}
\usepackage{amssymb}
\usepackage{amsmath}
\usepackage{parskip}
\usepackage{tikz}
\usepackage{tikz-cd}
\usepackage{bbm}
\usepackage{enumitem}

\usepackage{parskip}
\usepackage{graphicx}
\usepackage{import}
\usepackage{yhmath}
\usepackage{aliascnt}
\usepackage{standalone}
\usepackage{caption}
\usepackage{subcaption}

\usepackage[colorlinks=true, linkcolor=cyan, anchorcolor=cyan,citecolor=cyan,filecolor=cyan,menucolor=cyan,runcolor=cyan,urlcolor=cyan]{hyperref}

\usepackage{mathtools}

\usepackage[margin=1in,marginparwidth=0.8in, marginparsep=0.1in]{geometry}

\usepackage[draft]{say}
\definecolor{egyptianblue}{rgb}{0.06, 0.2, 0.65}

\usepackage[dvipsnames]{xcolor}
\definecolor{MHcol}{RGB}{16, 195, 235}

\definecolor{JBcol}{RGB}{109, 50, 168}

\usepackage{zref-clever}
\zcsetup{cap}

\newtheorem{theorem}{Theorem}[section]

\newtheorem{lemma}[theorem]{Lemma}
\AddToHook{env/lemma/begin}{%
   \zcsetup{countertype={theorem=lemma}}}
\zcRefTypeSetup{lemma}{Name-sg=Lemma}

\newtheorem{corollary}[theorem]{Corollary}
\AddToHook{env/corollary/begin}{%
   \zcsetup{countertype={theorem=corollary}}}
\zcRefTypeSetup{corollary}{Name-sg=Corollary}

\newtheorem{proposition}[theorem]{Proposition}
\AddToHook{env/proposition/begin}{%
   \zcsetup{countertype={theorem=proposition}}}
\zcRefTypeSetup{proposition}{Name-sg=Proposition}

\theoremstyle{definition}
\newtheorem{definition}[theorem]{Definition}
\AddToHook{env/definition/begin}{%
   \zcsetup{countertype={theorem=definition}}}
\zcRefTypeSetup{definition}{Name-sg=Definition}

\AddToHook{env/example/begin}{%
   \zcsetup{countertype={theorem=example}}}
\zcRefTypeSetup{example}{Name-sg=Example}

\newtheorem{remark}[theorem]{Remark}
\AddToHook{env/remark/begin}{%
   \zcsetup{countertype={theorem=remark}}}
\zcRefTypeSetup{remark}{Name-sg=Remark}

\newtheorem{construction}[theorem]{Construction}
\AddToHook{env/construction/begin}{%
   \zcsetup{countertype={theorem=construction}}}
\zcRefTypeSetup{construction}{Name-sg=Construction}

\AddToHook{env/claim/begin}{%
   \zcsetup{countertype={theorem=claim}}}
\zcRefTypeSetup{claim}{Name-sg=Claim}

\input{macros}

\DeclareRobustCommand{\SkipTocEntry}[5]{}

\newcommand{\red}[1]{{\color{red}#1}}

\newcommand{\norm}[1]{\left\Vert #1 \right\Vert}

\newcommand{\R}{\mathbb{R}}
\newcommand{\C}{\mathbb{C}}
\newcommand{\Z}{\mathbb{Z}}

\renewcommand{\P}{\mathbb{P}}

\newcommand{\Hom}{\mathrm{Hom}}

\newcommand{\dd}{\mathrm{d}}

\title{Mirror Symmetry for the Painlevé  Character Varieties}

\author{Joël Beimler, Mingyuan Hu, William Olsen, Vivek Shende}

\begin{document}
\begin{abstract}
We establish a homological mirror  theorem for the 4-manifolds arising as moduli of (irregular) rank two local systems on the projective line.  Specifically, we prove that the Fukaya category of  a moduli of such local systems with  generic microlocal monodromy at punctures is equivalent to the category of coherent sheaves on the minimal resolution of the corresponding moduli of local systems with trivial microlocal monodromy. 
\end{abstract}

\maketitle



\input{sections/introduction.tex}
\input{sections/A-side_handleslide}

 \clearpage
\input{sections/mirrorsymmetry}
\input{sections/B-side_toric_model}

\appendix

\input{sections/degenerations}

\newpage


\bibliographystyle{plain}
\bibliography{biblio}

\end{document}

%% file: macros.tex
\newcommand{\bA}{\mathbb{A}}
\newcommand{\bP}{\mathbb{P}}

\newcommand{\bC}{\mathbb{C}}

\newcommand{\bR}{\mathbb{R}}

\newcommand{\bM}{\mathbf{M}}
\newcommand{\bZ}{\mathbb{Z}}

\newcommand{\fp}{\mathfrak{p}}

\newcommand{\cO}{\mathcal{O}}

\newcommand{\PI}{{ {\mathrm{PI}}}}
\renewcommand{\PII}{{\mathrm{PII}}}
\newcommand{\PIII}{{\mathrm{PIII}}}
\newcommand{\PIV}{{\mathrm{PIV}}}
\newcommand{\PV}{{\mathrm{PV}}}
\newcommand{\PVI}{{\mathrm{PVI}}}

\newcommand{\FN}{{\mathrm{FN}}}

\newcommand{\Dsix}{D_6}
\newcommand{\Dseven}{D_7}
\newcommand{\Deight}{D_8}
\newcommand{\pdeg}{\mathrm{deg}}

\newcommand{\PIIFN}{{\PII(\FN)}}
\newcommand{\PIIIDsix}{{\PIII(\Dsix)}}
\newcommand{\PIIIDseven}{{\PIII(\Dseven)}}
\newcommand{\PIIIDeight}{{\PIII(\Deight)}}
\newcommand{\PVdeg}{{\PV(\pdeg)}}

\newcommand{\un}{\mathrm{un}}

\DeclareMathOperator{\Coh}{Coh}
\DeclareMathOperator{\Sh}{Sh}
\DeclareMathOperator{\Loc}{Loc}

\newcommand{\cD}{\mathcal{D}}
\newcommand{\cH}{\mathcal{H}}
\newcommand{\cU}{\mathcal{U}}
\newcommand{\cX}{\mathcal{X}}

\newcommand{\cB}{\mathcal{B}} 

\newcommand{\gen}{\mathrm{gen}}

%% file: sections/introduction.tex

\section{Introduction}

Mirror symmetry is paradigmatically about the study of dual torus fibrations \cite{SYZ}, and   Hitchin fibrations of moduli of Higgs bundles on curves have long been recognized as a natural example \cite{hausel-thaddeus}, where mirror symmetry is expected to moreover be synonymous with  geometric Langlands \cite{kapustin-witten} -- a subject which has seen much recent progress \cite{gaitsgory-raskin-proof-I, ABCCFGKRR-proof-II}. 

Nevertheless, homological mirror symmetry for these moduli spaces -- in the original sense of matching a Fukaya category of a symplectic manifold with a category of coherent sheaves on an algebraic variety \cite{kontsevich-homological} -- has remained elusive.
In fact, we know of at most one instance for which a complete mirror theorem was previously known: the moduli of rank two local systems on the projective line with a single irregular singularity with trefoil Stokes Legendrian -- i.e. the Painlev\'e I character variety -- is self-mirror: its Fukaya category is isomorphic to its category of coherent sheaves.\footnote{Even in this case, it is not entirely trivial to extract a proof from the literature.  In \cite{casals-murphy}, one finds a determination that the A-side space is obtained by attaching a handle to the 4-ball along a Legendrian trefoil.  The result would then follow from the surgery formula for the Fukaya category \cite{BEE}, were it known that the Legendrian DGA of said trefoil had no higher cohomology.  Said vanishing has no direct proof in the literature.  Experts know various ways to complete the argument; we will give one below.}

Let us clarify a confusing point: moduli of Higgs bundles are hyperk\"ahler manifolds, and  carry many symplectic forms.  In the above assertion, and the remainder of the article, we will use the real part of the holomorphic symplectic form on the Higgs moduli.  In particular, the symplectic form is exact, and the Hitchin fibers are Lagrangian.
It is known that this symplectic form is carried, by the nonabelian Hodge diffeomorphism \cite{hitchin-self-duality, donaldson-twisted, simpson-noncompact, biquard-boalch}, to a K\"ahler form for the corresponding character variety.
That is, the K\"ahler structures relevant to our desired mirror symmetry are those of the character variety, though the mirror SYZ fibrations are most readily described via the algebraic geometry of Hitchin's integrable system.  As we will not explicitly use said fibrations here, we will stick to the character variety description henceforth. 

Here we will study the lowest -- two complex -- dimensional examples: the moduli spaces of rank two irregular local systems on the projective line which are the (Betti realizations of) Okamoto's spaces of initial conditions \cite{okamoto-initial} for the Painlev\'e equations, which we will refer to as Painlev\'e character varieties. These can also be understood as parameterizing sheaves on $\P^1$ with microsupport on the Legendrians whose front projections are depicted in \zcref{fig:stokes_legendrians}.

\begin{figure}
    \centering
    \def\svgwidth{0.6\textwidth}
    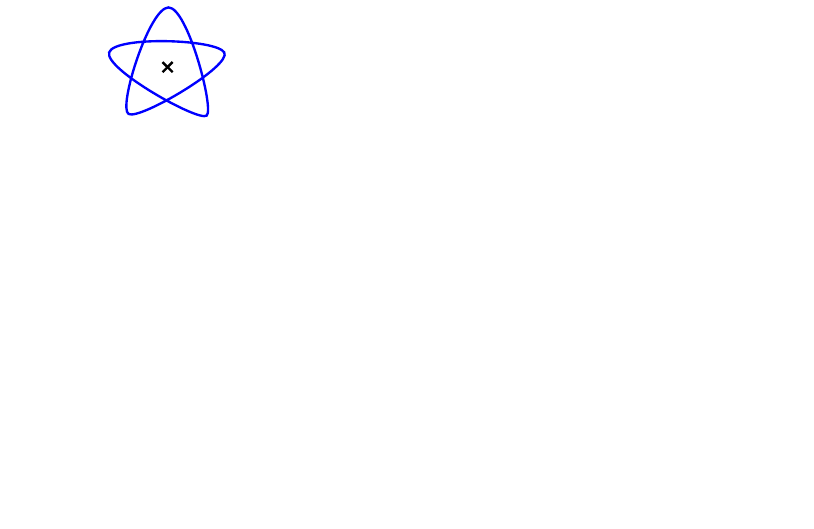
    \caption{Stokes Legendrians for all Painlevé types.}
    \label{fig:stokes_legendrians}
\end{figure}

More precisely, the relevant moduli of (irregular) local systems come in families parameterized by the (formal) monodromies at punctures; only after fixing said monodromies does the space become two dimensional. 

In fact, the Painlev\'e character varieties can be exhibited as certain explicit families of affine cubic surfaces.  In the most classical case -- the rank two characters of the fundamental group of the four punctured sphere -- this identification dates at least to the 1890s \cite{fricke, vogt}.  The other cases can be found in \cite{vanderput-saito}. 

We denote the set of Painlevé types as:
\[
\{\PI,\PII,\PIIFN,\PIIIDsix,\PIIIDseven,\PIIIDeight,\PIV,\PV,\PVdeg,\PVI\}.
\]

In this article we prove:

\begin{theorem} \label{thm: mirror}
    The Fukaya category of a generic Painlev\'e character variety 
    is equivalent to the derived category of coherent sheaves of the minimal resolution of the most singular Painlev\'e character variety 
    of the same irregular type.
\end{theorem}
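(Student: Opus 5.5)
We prove \zcref{thm: mirror} type by type, by making both sides explicit and matching them through a common combinatorial model. The natural intermediate object is a \emph{toric model}: a log Calabi--Yau surface presented as a blowup of a toric surface at points on the toric boundary. On the A-side, a generic Painlev\'e character variety is an affine log Calabi--Yau surface. By the Casals--Murphy style description (attaching Weinstein handles along the Stokes Legendrians of \zcref{fig:stokes_legendrians}), it admits a Weinstein handle presentation. On the B-side, the most singular member of each family is an affine cubic surface with $ADE$ singularities, and its minimal resolution $Y$ is a smooth surface containing chains of $(-2)$-curves.

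The plan proceeds in four steps.

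First, on the A-side, we use Legendrian handleslides to rewrite the Weinstein presentation of the generic character variety in a standard form. This is the content of the handleslide section. The target is the form known from the work of Hacking--Keating and others: a cotangent bundle of a torus $T^*T^2$, with critical handles attached along conormal lifts of lines of rational slopes $v_1,\dots,v_n$. Each slope appears with some multiplicity, and the slopes and multiplicities are read off from the Legendrian fronts.

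Second, we invoke the known homological mirror symmetry for such Weinstein manifolds. The wrapped Fukaya category of $T^*T^2$ with these handles is generated by cocores. Via the surgery formula, or via gluing of microlocal sheaf categories, it is equivalent to $D^b\Coh$ of the toric surface with fan $\{v_i\}$, blown up at points on the toric boundary divisors $D_{v_i}$. The number of points on $D_{v_i}$ equals the multiplicity of $v_i$, and the points lie at positions determined by the holonomy parameters, which in the exact setting are all at $1$. In the generic case we use the version in which the A-side is exact and the mirror is the complement of an anticanonical cycle; alternatively we use Fukaya categories without boundary, in which case the mirror is the full compactified blowup minus the boundary divisor.

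Third, on the B-side, which is the toric-model section, we show that $Y$ is exactly this blowup with the boundary removed. Blowing up points at identical positions on a single boundary divisor produces precisely the $(-2)$-chains of an $A_k$ configuration. The point is that the specialization ``all blowup points coincide with the identity of the torus'' corresponds to trivial microlocal monodromy, and that in turn corresponds to the most singular fiber. Concretely, we compactify the cubic surface in $\bP^3$, or in the appropriate weighted projective space, to a pair $(\overline{Y},D)$ with $D$ a cycle of rational curves at infinity, compute the self-intersections of the components of $D$, and find a toric model by contracting $(-1)$-curves. We then check that the resulting fan and blowup data match the slopes found in the first step. The appendix on degenerations is used to confirm which fiber is the most singular one and that its singularity type matches the $(-2)$-configuration.

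We expect the main obstacle to be the first step. For each of the ten irregular types we must find an explicit handleslide sequence that turns the Stokes-Legendrian surgery diagram into a toric-model diagram, and we must correctly track the generic-versus-special monodromy. The approach we would try first is to identify the boundary cycle self-intersection sequence from the Legendrian. The Legendrian determines the Looijenga pair up to deformation, and the mirror pair corresponds to the dual cycle. This identification constrains the slopes. After that, each handleslide only needs to be verified locally by Reidemeister and handleslide moves, carried out case by case.
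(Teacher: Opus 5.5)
Your architecture matches the paper's: handle calculus to a presentation of $T^*T^2$ with $2$-handles along conormal lifts of $k_i$ parallel circles of slope $v_i^\perp$ (\zcref{thm:torus_skeleta}), then a mirror theorem for such manifolds (\zcref{cor: nonstacky mirror}), then an identification of the B-side surface with the minimal resolution of the most singular fiber (\zcref{thm: B precise}). The gap is in how you locate the blowup centers, and it breaks your third step. You make two claims, and both are wrong:
\begin{itemize}
\item that the centers sit at $1$ (``trivial holonomy'', ``the identity of the torus'');
\item that the $(-2)$-curves are precisely the chains produced by coincident centers on a single boundary divisor.
\end{itemize}
To identify $\mu sh(\Lambda^\infty)$ with $\mathrm{Loc}(\Lambda^\infty)$ compatibly with the standard $\mathrm{Loc}(T^2)\cong \mathrm{Coh}((\mathbb{C}^*)^2)$, one must use the non-bounding spin structure on each attaching circle. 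Then the trivial rank one local system on the core disk restricts to the local system with monodromy $-1$, so the centers are at $-1$. The paper singles this out as the one point needing care once the rays are not linearly independent, and it is not cosmetic.
\begin{itemize}
\item For $\mathrm{PII}/\mathrm{PII(FN)}$ all multiplicities are $1$, so your mechanism produces no $(-2)$-curve, yet $\mathbf{M}^{\mathrm{un}}$ has an $A_1$ point. The exceptional curve of its resolution is the strict transform of a line through all three centers. For centers $[0:1:a],[b:0:1],[1:c:0]$ on the coordinate lines of $\mathbb{P}^2$, collinearity is the torus-invariant condition $abc=-1$. The identity points ($abc=1$) give a surface with no $(-2)$-curve, hence not the resolution of the $A_1$ cubic, whereas $[0:1:-1],[1:0:-1],[1:-1:0]$ lie on $u+v+w=0$.
\item For $\mathrm{PVI}$ your recipe sees only the three curves $E_i$. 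It misses the central $\widetilde L$ of the $D_4$ configuration, which does not exist for identity centers.
\item For $\mathrm{PIV}$ and $\mathrm{PV}$ the $A_2$ and $A_3$ chains contain the strict transform of $\{u=-1\}$, which passes through centers on two different divisors.
\end{itemize}

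So the slogan ``identity points $=$ trivial microlocal monodromy $=$ most singular fiber'' has to be replaced by an actual determination of the centers from the gluing: the microsheaf pushout over $\Lambda^\infty$ matched with the blowup pushout of \cite{gammage-le}. The B-side check must also account for all curves through several centers, not only infinitely near ones. Your Step 2 also hides where the multiplicities enter. The $k_i$ parallel circles form the FLTZ skeleton of the \emph{stacky} fan with rays $k_i v_i$, so the gluing yields a blowup of a toric Deligne--Mumford stack at the points $\{-1\}\times B\mu_{k_i}$, leaving orbifold points $\mathbb{C}^2/\mu_{k_i}$. You need Kapranov--Vasserot and the toric computation of \zcref{lem:BSigma_resolution} to trade these for your $k_i$ infinitely near blowups along $D_i$.

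Once the centers are right, your Step 3 strategy (compactify, find a toric model) is a reasonable, more conceptual alternative to the paper's explicit regular functions on $\mathcal{B}_\tau$. In Step 1 the paper works in the opposite direction. It guesses the torus presentation from the B-side and verifies it with the algorithm of \cite{acu_complements} plus handle moves (and a mutation for $\mathrm{PVI}$), which avoids having to ``read off'' slopes from the fronts. Finally, the appendix plays no role in the proof: the most singular fiber is read off directly from the explicit cubic equations.
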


In \zcref{table:pain_params}, we specify the explicit parameters which cut out the most singular member of each respective family. One can check that imposing this choice of parameters is equivalent to demanding that the microlocal monodromy along the Stokes Legendrian be trivial.

The proof combines two brute calculations -- one algebraic, one symplectic -- for each Painlev\'e type -- with some standard methods of homological mirror symmetry (the Fukaya/microsheaf correspondence  \cite{GPS3} and the microsheaf model of toric mirror symmetry  \cite{FLTZ}).

\begin{figure}
    \centering
    \def\svgwidth{0.65\textwidth}
    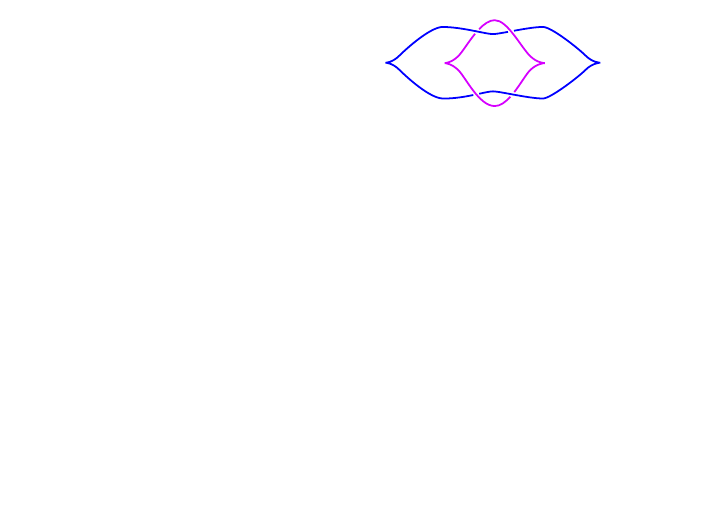
    \caption{Painlevé character varieties as Legendrian handle attachments along links in $J^1 \R \subset \partial B^4$.}
    \label{fig:painleve_handlebodies}
\end{figure}

\begin{figure}
    \centering
    \def\svgwidth{0.65\textwidth}
    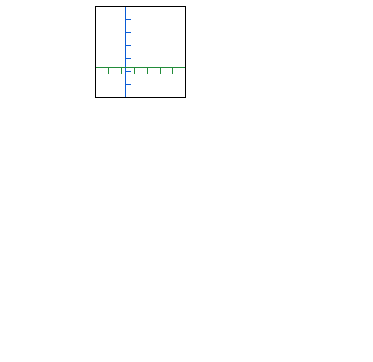
    \caption{Painlevé character varieties as Legendrian handle attachments along links in $S^* T^2 = \partial T^* T^2$.}
    \label{fig:torus_skeleta}
\end{figure}

The  symplectic topology of the general member of each family was characterized  in \cite{beimler-olsen}, where a Weinstein handle presentation was computed from the aforementioned cubic equations by applying the explicit Lefschetz fibration methods of \cite{casals-murphy}.  One presentation of the result was given in  \zcref{fig:painleve_handlebodies}.  

In \cite{beimler-olsen},  another equivalent presentation for the same Weinstein manifolds was derived: attach handles along the Stokes Legendrians (shown in \zcref{fig:stokes_legendrians}) of the corresponding irregular type of linear differential equation. This presentation indicates that a Lagrangian skeleton of the resulting Weinstein manifold contains, as an open subset, the conical locus in the cotangent bundle of $\P^1$ which gives the microlocal characterization of the irregular local systems of the given type.  The handle attachments impose the condition that the microlocal monodromies must extend over a disk, hence be trivial.  This gives, for formal reasons, a map from the Fukaya category of said Weinstein manifold to the category of coherent sheaves on the moduli of said local systems, suggesting some relation along the lines of Theorem \ref{thm: mirror}.  However, we don't know how to directly analyze this map.

Instead, we will obtain, by handle calculus, a  presentation of these spaces of the form:   attach 2-handles to the cotangent bundle of the torus along Legendrian lifts of embedded curves.  

Let us explain one reason to expect such a presentation to exist.  As shown in \cite{STWZ}, the Stokes Legendrians of irregular local systems admit Lagrangian fillings.  The Lagrangians here would be punctured tori; attaching  handles along Stokes Legendrians fills the punctures.  Thus our  spaces  contain an exact Lagrangian torus.  Inspection of the \cite{STWZ} construction  suggests that the desired 4-manifold is recovered by attaching disks to said torus to reconstruct the original zero section of the $T^* \mathbb{P}^1$; this view is substantiated by the fact that the corresponding cluster structures arise geometrically in such a manner \cite{STW, casals-gao}.  But rather than attempt to make this construction precise, here we just directly manipulate the handle presentation.  The result is the following:

\begin{theorem}\label{thm:torus_skeleta}
 \zcref{fig:painleve_handlebodies} and \zcref{fig:torus_skeleta} present equivalent Weinstein manifolds.
\end{theorem}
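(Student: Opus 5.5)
The plan is to pass from \zcref{fig:painleve_handlebodies} to \zcref{fig:torus_skeleta} by Kirby calculus for Weinstein handlebodies, treating each of the ten Painlev\'e types separately. The starting point is the standard model of $T^*T^2$ as a Weinstein handlebody: $B^4$ with two $1$-handles and one $2$-handle, attached along a Legendrian in $\#^2(S^1\times S^2)$ in Gompf standard form. In this model the $2$-handle is the ``square'' Legendrian crossing each $1$-handle twice. In this presentation, Legendrian lifts (conormal lifts) of embedded curves on $T^2$ in $S^*T^2$ have explicit front diagrams: a curve of slope $(p,q)$ lifts to a Legendrian that runs through the $1$-handles $p$ and $q$ times, with cusps recording the co-orientation. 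So the first step is to redraw each link of \zcref{fig:torus_skeleta} as a Legendrian link in Gompf standard form, with two $1$-handles and one extra component for the $T^2$ $2$-handle.

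The second step is to eliminate the two $1$-handles, so that we land in a presentation by $2$-handles on $B^4$ alone, i.e.\ a link in $J^1\R\subset\partial B^4$. For each $1$-handle we find a $2$-handle attaching circle passing over it geometrically once; a Legendrian handle cancellation then removes the pair. Before cancelling, we slide the other strands through that $1$-handle off it using Legendrian handle slides, via Ding--Geiges/Casals--Murphy style front moves. Each slide replaces a strand by its band sum with a Legendrian push-off of the cancelling component. One natural choice is to cancel one $1$-handle against the torus $2$-handle and the other against one of the curve-lifts that meets it once. In all the figures at least one curve should have a slope with a $\pm1$ coordinate, and if not we first perform handle slides among the $2$-handles to produce one.

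The third step is to simplify the resulting front by Legendrian Reidemeister moves and further $2$-handle slides until it matches the corresponding link of \zcref{fig:painleve_handlebodies}. For instance, for $\PI$ the result should be the single trefoil, and for the others the links of the figure. Where direct matching is awkward, we can compare both sides with the Stokes-Legendrian presentation of \cite{beimler-olsen}, using whichever intermediate is closer.

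The main obstacle is this last step. Handle slides change the Thurston--Bennequin numbers and the rotation data via push-offs, so each resulting front must be checked to be Legendrian isotopic to the target, not just smoothly isotopic. A useful sanity check at each stage is to compare invariants. The linking and framing matrix, i.e.\ the intersection form, must be preserved. The tb$-1$ framings should match the $(-2)$-self-intersections of the curves in $T^2$ plus Lagrangian framing. The Euler characteristic $\chi=2-2+\#\text{curves}$ should agree with the number of $2$-handles in \zcref{fig:painleve_handlebodies}. Finally, the Legendrian isotopy itself has to be found by hand case by case, working type by type from $\PI$ upward through the degenerations.
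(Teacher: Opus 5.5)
Your outline is the paper's route: redraw the conormal lifts of \zcref{fig:torus_skeleta} in Gompf standard form on the two-$1$-handle model of $T^*T^2$ (the paper does this with \cite[Theorem 8.1]{acu_complements}), cancel $1$-handles, then match fronts by Reidemeister moves, slides and cancellations type by type. The paper also first mutates the $\PVI$ configuration as in \cite{STW}, but only for convenience.

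However, your second step cannot be carried out for $\PIIIDeight$. There the rays are $(1,0)$ and $(1,2)$, so the two curves lie in classes $(0,1)$ and $(2,-1)$. These span an index-two sublattice of $H_1(T^2)$, so $H_1(\mathbf{A}_{\PIIIDeight})\cong\mathbb{Z}/2$. The target agrees. On $\{xyz+x^2+y^2+y=0\}$, the complement of the lines $\{x=y=0\}$ and $\{x=0,\,y=-1\}$ is $(\mathbb{C}^*)^2_{x,y}$, and their meridians are $(1,2)$ and $(1,0)$, so $\pi_1\cong\mathbb{Z}/2$.

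$H_1$ is invariant under all the moves, and a presentation by $2$-handles on $B^4$ alone has $H_1=0$. So no sliding produces a second cancelling pair. After you cancel one $1$-handle against the vertical curve, the remaining $2$-handles run algebraically $0$ and $\pm2$ times over the surviving $1$-handle, and slides only take integral combinations of these. Your fallback (``perform handle slides among the $2$-handles to produce one'') therefore fails. The criterion ``some curve has a $\pm1$ coordinate'' is also the wrong one: cancelling both $1$-handles requires the curve classes to generate $\mathbb{Z}^2$. For $\PIIIDeight$ you must stop with one $1$-handle and compare with a diagram that keeps it. The $\PIIIDeight$ picture in \zcref{fig:painleve_handlebodies} must contain such a $1$-handle, the caption's ``links in $J^1\mathbb{R}$'' notwithstanding.

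Two smaller corrections. First, the torus $2$-handle is attached along the commutator and runs algebraically zero times over each $1$-handle, so it is not a natural cancellation partner. Both cancellations should use curve lifts; for $\PI$, the horizontal and vertical curves cancel the two $1$-handles and the torus handle becomes the trefoil. Second, your Euler characteristic check is off by one. $\chi(\mathbf{A})$ equals the number of curves, whereas a diagram with $h_1$ $1$-handles and $h_2$ $2$-handles has $\chi=1-h_1+h_2$. For $\PI$, two curves give $\chi=2$ but only a single $2$-handle.
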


\zcref{thm:torus_skeleta} of PI has long been known to experts\footnote{In the context of the expected comparison between \cite{STWZ} and \cite{STW}, the last-named author learned how to do this calculation from some combination of Casals, Murphy, and Starkston circa 2017.} and appears e.g. in the proof of \cite[Thm. 10.1]{acu_complements}.  The 
result for PII is asserted (without proof) in \cite[Fig. 26 and 27]{acu_intro}.  The remaining cases are new here, but are proven similarly.


By \cite{GPS3}, we may compute Fukaya categories of Weinstein manifolds via  microsheaves  \cite{kashiwara-schapira, shende-microlocal, nadler-shende} on their Lagrangian skeleta.  
Here we have a skeleton obtained by attaching handles to conormal lifts of curves on tori.
The corresponding category can be computed by a gluing calculation, with the most complicated factor being that given by the part already contained inside the $T^* T^2$; this, conveniently, is a (very) special case of the skeleta known to be mirror to toric stacks \cite{FLTZ, FLTZ-stack, Kuwagaki-CCC}.  We give the fans characterizing the corresponding toric stacks in \zcref{fig:stacky-toric-models}.

\begin{figure}
    \centering
    \begin{subfigure}[b]{.3 \textwidth}
        \centering
        
        \begin{tikzpicture}[line width = 1pt, Blue, scale = 1.2]
        \foreach \x in {-1,...,1}
            \foreach \y in {-1,...,1}
                \node[circle, fill=gray!80, inner sep=0pt, minimum size=3pt] at (\x,\y) {};
                
            \draw[-Stealth] (0, 0) -- (0, 1);
            \draw[-Stealth] (0, 0) -- (1, 0); 
        \end{tikzpicture}
        \caption{ \(\PI\)}
    \end{subfigure}
    \begin{subfigure}{.3 \textwidth}
        \centering
        \begin{tikzpicture}[Blue, line width = 1pt, scale = 1.2]
        \foreach \x in {-1,...,1}
            \foreach \y in {-1,...,1}
                \node[circle, fill=gray!80, inner sep=0pt, minimum size=3pt] at (\x,\y) {};
            \draw[-Stealth] (0, 0) -- (0, 1);
            \draw[-Stealth] (0, 0) -- (1, 0); 
            \draw[-Stealth] (0, 0) -- (-1, -1);
        \end{tikzpicture}
        \caption{ \(\PII / \PIIFN\)}
    \end{subfigure}
    \begin{subfigure}{.3 \textwidth}
        \centering
        \begin{tikzpicture}[line width = 1pt, scale = 1, Blue]
        \foreach \x in {-1,...,2}
            \foreach \y in {-1,...,2}
                \node[circle, fill=gray!80, inner sep=0pt, minimum size=3pt] at (\x,\y) {};
            \draw[-Stealth] (0, 0) -- (0, 2);
            \draw[-Stealth] (0, 0) -- (2, 0);
        \end{tikzpicture}
        \caption{ \(\PIIIDsix / \PVdeg\)}
    \end{subfigure}

    \vspace{1em}
     \begin{subfigure}{.3 \textwidth}
        \centering
        \begin{tikzpicture}[line width = 1pt, scale = 1.2, Blue]
        \foreach \x in {-1,...,1}
            \foreach \y in {-1,...,1}
                \node[circle, fill=gray!80, inner sep=0pt, minimum size=3pt] at (\x,\y) {};
            \draw[-Stealth] (0, 0) -- (0, 1);
            \draw[-Stealth] (0, 0) -- (1, 0);
            \draw[-Stealth] (0, 0) -- (1, 1); 
        \end{tikzpicture}
        \caption{ \(\PIIIDseven\)}
    \end{subfigure}
    \begin{subfigure}{.3 \textwidth}
        \centering
        \begin{tikzpicture}[line width = 1pt, scale = 1.2, Blue]
        \foreach \x in {-1,...,2}
            \foreach \y in {0,...,2}
                \fill[gray!80] (\x,\y) circle (1.5pt);
            \draw[-Stealth] (0, 0) -- (1, 0);
            \draw[-Stealth] (0, 0) -- (1, 2); 
        \end{tikzpicture}
        \caption{ \(\PIIIDeight\)}
    \end{subfigure}
    \begin{subfigure}{.3 \textwidth}
        \centering
        \begin{tikzpicture}[line width = 1pt, scale = 1, Blue]
        \foreach \x in {-1,...,1}
            \foreach \y in {-1,...,2}
                \fill[gray!80] (\x,\y) circle (1.5pt);
            \draw[-Stealth] (0, 0) -- (0, 2);
            \draw[-Stealth] (0, 0) -- (1, 0); 
            \draw[-Stealth] (0, 0) -- (0, -1); 
            
        \end{tikzpicture}
        \caption{ \(\PIV\)}
    \end{subfigure}

    \vspace{1em}
    \begin{subfigure}{.3 \textwidth}
        \centering
        \begin{tikzpicture}[line width = 1pt, scale = 1, Blue]
        \foreach \x in {-1,...,1}
            \foreach \y in {-2,...,2}
                \fill[gray!80] (\x,\y) circle (1.5pt);
            \draw[-Stealth] (0, 0) -- (0, 2);
            \draw[-Stealth] (0, 0) -- (1, 0); 
            \draw[-Stealth] (0, 0) -- (0, -2);
            
        \end{tikzpicture}
        \caption{ \(\PV\)}
    \end{subfigure}
    \begin{subfigure}{.3 \textwidth}
        \centering
        \begin{tikzpicture}[line width = 1pt, scale = 1, Blue]
        \foreach \x in {-2,...,2}
            \foreach \y in {-2,...,2}
                \fill[gray!80] (\x,\y) circle (1.5pt);
            \draw[-Stealth] (0, 0) -- (0, 2);
            \draw[-Stealth] (0, 0) -- (2, 0); 
            \draw[-Stealth] (0, 0) -- (-2, -2);
            
        \end{tikzpicture}
        \caption{ \(\PVI\)}
    \end{subfigure}
    \caption{The fans of the toric stacks mirror, under \cite{FLTZ, FLTZ-stack}, to the skeleta of \zcref{fig:torus_skeleta}. Note that the fans do not contain \(2\)-dimensional cones.  }
    \label{fig:stacky-toric-models}
\end{figure}

The gluing calculation we need is essentially the main result of \cite{gammage-le}, though strictly speaking we  need a slight generalization, given in \zcref{thm:HMS-for-U-M}.  We will also trade the resulting stacky objects for resolutions of their coarse moduli (which in these cases  have equivalent derived categories \cite{Kapranov-Vasserot}).  
We deduce that each Fukaya category in question is equivalent to the  category of coherent sheaves on the complement of the strict transform of the toric boundary divisor in a certain explicit (nontoric) blowup of a toric surface, illustrated in \zcref{fig:B-side toric models}.

\begin{figure}

\centering

\begin{subfigure}[b]{.3 \textwidth}
    \centering
\begin{tikzpicture}[line width = 1pt, scale = .8]
    \draw[blue] (3, 0) -- (-1, 0) -- (-1,4);
    \draw (1, -.4) .. controls (.9, .3) and (1.1, .8) .. (1.2, 1);
    \draw (-1.4,  2) .. controls (-.8, 1.9) and (-.4, 2) .. (0, 2.2); 

\end{tikzpicture}
\caption{\(\PI\)}
\end{subfigure}
\begin{subfigure}[b]{.3\textwidth}
    \centering
\begin{tikzpicture}[line width = 1pt, scale = .7]
    \draw[blue] (4, 0) -- (-1, 0) -- (-1,5) -- (4, 0);
    \draw (1.5, -.4) .. controls (1.4, .3) and (1.3, .8) .. (1.2, 1);
    \draw (-1.4,  2.5) .. controls (-1, 2.3) and (-.4, 2.2) .. (0, 2.2); 
    \draw (1.8, 2.8) ..controls (1.5, 2.6) and (1.1, 2.2) .. (.9, 1.9); 
    \draw[rotate=-45, red] (-.7, 1.4) ellipse (1.2 and 1);
    
\end{tikzpicture}

\caption{\(\PII / \PIIFN\)}
\end{subfigure}
\begin{subfigure}[b]{.3\textwidth}
\centering
\begin{tikzpicture}[yscale = .35, xscale = .35, line width = 1pt]
    \draw[blue] (10, 0) -- (0,0) -- (0, 10); 

    \draw (0,0) (5, -1) arc (-10:20:5);
    \draw[red] (0, 2) ++(-17:5) arc (-20:10:5);
    
    \draw (0,0) ++(95: 5) arc (100:70:5);
    \draw[red] (2, 0) ++(105:5) arc (105:75:5.5);
\end{tikzpicture}
    \caption{\(\PIIIDsix/\PVdeg\)}
\end{subfigure}

\begin{subfigure}[b]{.3\textwidth}
\centering
\begin{tikzpicture}[yscale = .35, xscale = .35, line width = 1pt]
    \draw[blue] (10, 0) -- (3,0) -- (0, 3) -- (0, 10); 

    \begin{scope}[shift = {(2,0)}]
    \draw (0,0) ++ (-5: 5) arc (-5:25:5);
    \end{scope}
    
    \begin{scope}[shift = {(0,2)}]
    \draw (0,0) ++(95: 5) arc (95:70:5);
    \end{scope}

    \begin{scope}[shift = {(5,-2)}, rotate = 45]
    \draw (0,0) ++(95: 5) arc (95:70:5);
    \end{scope}

\end{tikzpicture}
\caption{\(\PIIIDseven\)}
\end{subfigure}
\begin{subfigure}[b]{.3 \textwidth}
    \centering
    \begin{tikzpicture}[scale = .24, line width = 1pt]
    \draw[blue] (-10, 5) -- (0, 0) -- (10, 0);

    \draw (0,0) (5, -1) arc (-10:25:6);

    \begin{scope}[rotate = 140]
        \draw (0,0) (5, -1) arc (-5:25:6);

    \end{scope}
    \end{tikzpicture}
    \caption{\(\PIIIDeight\)}
\end{subfigure}
\begin{subfigure}[b]{.3\textwidth}
    \centering
    \begin{tikzpicture}[scale = .3, line width = 1pt]
    \draw[blue] (10, 10) -- (0, 10) -- (0,0) -- (10, 0);
    
    \draw (0,1) ++ (-20: 5) arc (-20:20:5);
    
    \draw (0, 10) ++ (5: 5) arc (5: -20: 6);
    \draw[red] (0, 7) ++ (25: 5) arc (30: -5: 5);

    \draw[red] (5, 4.3) ellipse (1.2 and 2.3);
    
    \draw (0, 0) ++ (95: 5) arc (95: 65:5);
    \end{tikzpicture}
    \caption{\(\PIV\)}
\end{subfigure}

\begin{subfigure}[b]{.45\textwidth}
    \centering
    
    \begin{tikzpicture}[scale = .4, line width = 1pt]
    \draw[blue] (10, 10) -- (0, 10) -- (0,0) -- (10, 0) ;
    
    \draw (0,0) ++ (-5: 5) arc (-5:25:5);
    \draw[red] (0, 2) ++ (-10: 4.7) arc (-20:20:4);
    
    \draw (0, 10) ++ (5: 5) arc (5: -25:5);
    \draw[red] (0, 7.8) ++ (15: 4.8) arc (20: -15: 4.5);
    
    \draw (0, 0) ++ (95: 5) arc (95: 65:5);
     \draw[red] (4.8, 5) ellipse (1.2 and 1.9);
    \end{tikzpicture}

    \caption{\(\PV\)}
\end{subfigure}
\begin{subfigure}[b]{.45\textwidth}
    \centering
    \begin{tikzpicture}[scale = .28, line width = 1pt]
        \draw[blue] (-2, 0) -- (14, 0) -- (-2, 16) -- cycle;
        \draw (2.5, -1) ++ (10: 3) arc (10: 70: 3);
        \draw[red] (10, 3) ++ (-5: -6.2) arc (-5: 25: -5); 
        
        \draw (-3, 3.5) ++ (90: 3) arc (90: 35: 3 ); 
        \draw[red] (-.1, 2.7) ++ (110: 3) arc (110: 60: 3);

        \draw (10, 5.5) ++ (140: 5) arc (140: 165: 5);
        \draw[red] (7.6, 4) ++ (120: 4) arc (120: 155: 4);
        \draw[rotate=0, red] (2.8, 4.7) ellipse (2.1 and 2.1);
    \end{tikzpicture}
    \caption{\(\PVI\)}

\end{subfigure}

\caption{The toric models on the B-side.  The {\color{blue}{blue}} lines are the strict transforms of the original toric boundary, and the arcs are the exceptional divisors of blowups.  The desired surface is obtained by deleting the strict transform of the toric boundary.  Note that the \red{red} curves are \(\bP^1\)'s which survive in the resulting open surface (and so the cases with red arcs are not affine).}
  \label{fig:B-side toric models}
\end{figure}

To complete the proof of our asserted mirror symmetry, we prove: 

\begin{theorem}[\zcref{thm: B precise}]
\label{thm: B} 
    For each Painlev\'e type, the surface which is mirror to the handle attachment depicted in \zcref{fig:torus_skeleta} -- i.e., the complement of the strict transform of the toric boundary divisor in a certain  (nontoric) blowup of a certain toric surface -- is isomorphic to the minimal resolution of the corresponding unipotent Painlev\'e character variety. 
\end{theorem}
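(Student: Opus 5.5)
The plan is a case-by-case comparison of two explicit projective compactifications. On one side sits the toric model of \zcref{fig:B-side toric models}. On the other sits the classical compactification of the cubic surface from \cite{vanderput-saito}.

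For each type, the unipotent character variety is an affine cubic surface $\{xyz + Q(x,y,z) = 0\}$ in $\bA^3$, with the parameters fixed as in \zcref{table:pain_params}. I would first take its closure $\bar X \subset \bP^3$. The divisor at infinity $\bar X \cap \{w=0\}$ is a triangle of lines $xyz=0$ in the generic cases, and degenerates to fewer lines or a conic plus line in the types $\PI$, $\PII$, $\PIIIDseven$, $\PIIIDeight$. I would then list the singularities of $\bar X$. The affine singular points are the ADE points whose minimal resolution we want. There may also be singular points on the boundary triangle.

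Next I would resolve everything, both the affine singularities and the boundary singularities, and pass to a minimal snc compactification $(Y, D)$ of the minimal resolution $\tilde X$. Here $D$ is a cycle, or in the degenerate cases a chain, of rational curves. Since $\tilde X$ carries a nowhere vanishing $2$-form with poles along $D$, the pair $(Y,D)$ is a log Calabi--Yau surface. By the Gross--Hacking--Keel theory, after toric blowups at nodes of $D$, such a pair admits a toric model: a toric surface $(\bar Y, \bar D)$ together with a sequence of non-toric blowups at smooth points of $\bar D$, possibly infinitely near, whose blown-up surface minus the strict transform of $\bar D$ is $\tilde X$.

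Rather than invoke existence abstractly, I would produce the toric model explicitly. The approach is to find $(-1)$-curves in $Y$ meeting $D$ transversally once, namely the lines on the cubic not contained in the boundary, or their transforms. Contracting these should reach a toric pair. I would then match the resulting blowup configuration, including the multiplicities recorded by the fan rays of \zcref{fig:stacky-toric-models}, against \zcref{fig:B-side toric models}.

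A cleaner way to organize the matching is through the cluster/character coordinates. On the toric model, the functions $x, y, z$ should be realized as theta functions, that is, Laurent polynomials in the torus coordinates $(u,v)$ that become regular after the blowups. Concretely, I would write down Laurent polynomials in $u, v$ that satisfy the cubic relation identically. I would then check two things:
\begin{itemize}[label={}]
\item \emph{Properness.} The resulting map from the blown-up surface minus $D$ to $\bar X \setminus \{w=0\}$ is proper and birational. This is checked by computing the pole orders of $x, y, z$ along each boundary component.
\item \emph{Crepancy and minimality.} The map contracts exactly the red $(-2)$-curves of \zcref{fig:B-side toric models} onto the singular points, and no $(-1)$-curves survive in the open part.
\end{itemize}
Since a crepant resolution of an ADE surface singularity is unique and minimal, these two checks identify the surface with the minimal resolution. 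The Dynkin type of each red configuration should match the singularity type of the unipotent cubic, for example $D_4$ for $\PVI$ (the red central curve meets three others).

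The main obstacle is the cases where the blowup centers are infinitely near, or lie at special positions on the boundary: $\PIIIDsix$, $\PVdeg$, $\PIIIDeight$, $\PIV$, $\PV$, $\PVI$. There the exceptional curves form chains and the specific positions of the points matter; for example, for $\PVI$ the blown-up points on each side must be the ones that produce a $D_4$ configuration. Moving the points would instead give the generic smooth fiber. For these cases I would first verify that the points are positioned so that the Laurent expressions satisfy the cubic with the unipotent parameters. I would confirm this by computing intersection numbers of the strict transforms, checking that the red curves have self-intersection $-2$ and that $D$ has the same self-intersection sequence as the boundary of the resolved cubic compactification.
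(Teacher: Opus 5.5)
Your operative route (the ``cleaner way'': write $x,y,z$ as Laurent polynomials in the toric coordinates satisfying the unipotent cubic, check in the blowup charts that their only poles lie on the deleted strict transforms of the toric boundary, and identify the contracted locus with the red $(-2)$-curves in the right ADE configuration) is essentially the paper's proof, with two differences: the paper actually writes the functions down type by type (e.g.\ $x=-(u+v)/w$, $y=-(u+w)/v$, $z=-(v+w)/u$ on $\mathbb{P}^2$ for $\mathrm{PII}$), which is where all the content lies and which you only promise, and it replaces your properness check by surjectivity (via inverse rational maps and the stratification of $\mathbf{M}_\tau^{\mathrm{un}}$) plus injectivity away from the red curves --- your check would in any case need more than pole orders along boundary components, since toric models such as $\mathbb{A}^2$ and $\mathbb{A}^1\times\mathbb{P}^1$ must first be completed, and a pole along a component gives no control at points where a numerator also vanishes. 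The GHK/compactification preamble is unnecessary and contains a slip: every cubic in the list has $xyz$ as its only cubic term, so the closure in $\mathbb{P}^3$ always meets the plane at infinity in the triangle $xyz=0$ (the types differ only through the singularities at its vertices), and the resolved boundary $D$ is always a cycle, never a chain.
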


The main step in the proof of \zcref{thm: B} is determining explicitly the rings of global functions on the mirror surfaces.   Plausibly, this could be done  using the ideas of \cite{sakai}. 

Instead, we asked ChatGPT, which was able to provide an explicit answer in coordinates (which we then verified).
More precisely: we knew \zcref{thm:torus_skeleta} for $\PI$ and $\PII$, and thus expected \zcref{thm: B}; we verified it after obtaining the regular functions from ChatGPT.  By analyzing the topology and singularities of the Painlev\'e character varieties, we were able to guess toric models of the form: blow up these points on that toric surface.  From this description, ChatGPT could determine regular functions, which we verified to be correct and to agree with the explicit formulas of \cite{vanderput-saito}.  Finally, these B-side calculations suggested (by mirror symmetry) the formulation of \zcref{thm:torus_skeleta}, which we then proved by handle calculus.


\clearpage

\begin{remark}
    Mirror symmetry for character varieties is closely related to the `Betti' form of the geometric Langlands correspondence \cite{nadler-benzvi}; indeed, it is expected to be a microlocalization thereof; see e.g. discussions in the introductions of \cite{shende-higgs, nadler-shende-higgs}.  
    However, we do not know how to deduce \zcref{thm: mirror} from known results in Betti Langlands.  There are at least three reasons for this.  The first is that we are not aware of a precise formulation of the geometric Langlands equivalence in the presence of wild ramification, though some directions can be found in \cite{witten-wild}.  
    In the tamely ramified case -- which overlaps with the Painlev\'e series only in $\PVI$ -- such a conjectural formulation is known, and it is possible that Betti Langlands for $\PVI$ can be extracted from the literature.  But, second, even here the authors do not understand how Langlands predicts that passing to generic  monodromy on one side matches passing to unipotent  monodromy on the other.  Finally, even in the unramified case --- where the Betti correspondence was shown, as a corollary of the more famous de Rham version, in \cite{gaitsgory-raskin-proof-I, ABCCFGKRR-proof-II} --  it is not presently known how to derive the desired mirror symmetry results from the  geometric Langlands equivalence; see e.g. \cite[Question 1.3]{nadler-shende-higgs} for an indication of our ignorance. 
\end{remark}

\vspace{2mm}
{\bf Acknowledgements.}  The authors are  supported by Villum Fonden Villum Investigator grant 37814.

%% file: images/stokes_legendrians.pdf_tex
\begingroup%
  \makeatletter%
  \providecommand\color[2][]{%
    \errmessage{(Inkscape) Color is used for the text in Inkscape, but the package 'color.sty' is not loaded}%
    \renewcommand\color[2][]{}%
  }%
  \providecommand\transparent[1]{%
    \errmessage{(Inkscape) Transparency is used (non-zero) for the text in Inkscape, but the package 'transparent.sty' is not loaded}%
    \renewcommand\transparent[1]{}%
  }%
  \providecommand\rotatebox[2]{#2}%
  \newcommand*\fsize{\dimexpr\f@size pt\relax}%
  \newcommand*\lineheight[1]{\fontsize{\fsize}{#1\fsize}\selectfont}%
  \ifx\svgwidth\undefined%
    \setlength{\unitlength}{390.55181068bp}%
    \ifx\svgscale\undefined%
      \relax%
    \else%
      \setlength{\unitlength}{\unitlength * \real{\svgscale}}%
    \fi%
  \else%
    \setlength{\unitlength}{\svgwidth}%
  \fi%
  \global\let\svgwidth\undefined%
  \global\let\svgscale\undefined%
  \makeatother%
  \begin{picture}(1,0.64908767)%
    \lineheight{1}%
    \setlength\tabcolsep{0pt}%
    \put(0,0){\includegraphics[width=\unitlength,page=1]{stokes_legendrians.pdf}}%
    \put(0.19360898,0.47628519){\color[rgb]{0,0,0}\makebox(0,0)[lt]{\lineheight{1.25}\smash{\begin{tabular}[t]{l}PI\end{tabular}}}}%
    \put(0,0){\includegraphics[width=\unitlength,page=2]{stokes_legendrians.pdf}}%
    \put(0.45398303,0.48240709){\color[rgb]{0,0,0}\makebox(0,0)[lt]{\lineheight{1.25}\smash{\begin{tabular}[t]{l}PII\\\end{tabular}}}}%
    \put(0,0){\includegraphics[width=\unitlength,page=3]{stokes_legendrians.pdf}}%
    \put(0.7274159,0.48540579){\color[rgb]{0,0,0}\makebox(0,0)[lt]{\lineheight{1.25}\smash{\begin{tabular}[t]{l}PII(FN)\\\end{tabular}}}}%
    \put(0,0){\includegraphics[width=\unitlength,page=4]{stokes_legendrians.pdf}}%
    \put(0.05662207,0.29936745){\color[rgb]{0,0,0}\makebox(0,0)[lt]{\lineheight{1.25}\smash{\begin{tabular}[t]{l}PIII(D6)\\\\\end{tabular}}}}%
    \put(0,0){\includegraphics[width=\unitlength,page=5]{stokes_legendrians.pdf}}%
    \put(0.31750271,0.29962564){\color[rgb]{0,0,0}\makebox(0,0)[lt]{\lineheight{1.25}\smash{\begin{tabular}[t]{l}PIII(D7)\\\\\end{tabular}}}}%
    \put(0,0){\includegraphics[width=\unitlength,page=6]{stokes_legendrians.pdf}}%
    \put(0.58145846,0.30591028){\color[rgb]{0,0,0}\makebox(0,0)[lt]{\lineheight{1.25}\smash{\begin{tabular}[t]{l}PIII(D8)\\\end{tabular}}}}%
    \put(0,0){\includegraphics[width=\unitlength,page=7]{stokes_legendrians.pdf}}%
    \put(0.8590786,0.30120397){\color[rgb]{0,0,0}\makebox(0,0)[lt]{\lineheight{1.25}\smash{\begin{tabular}[t]{l}PIV\end{tabular}}}}%
    \put(0,0){\includegraphics[width=\unitlength,page=8]{stokes_legendrians.pdf}}%
    \put(0.18887195,0.00719901){\color[rgb]{0,0,0}\makebox(0,0)[lt]{\lineheight{1.25}\smash{\begin{tabular}[t]{l}PV\end{tabular}}}}%
    \put(0,0){\includegraphics[width=\unitlength,page=9]{stokes_legendrians.pdf}}%
    \put(0.46658412,0.01796028){\color[rgb]{0,0,0}\makebox(0,0)[lt]{\lineheight{1.25}\smash{\begin{tabular}[t]{l}PV(deg)\\\end{tabular}}}}%
    \put(0,0){\includegraphics[width=\unitlength,page=10]{stokes_legendrians.pdf}}%
    \put(0.81762017,0.01212932){\color[rgb]{0,0,0}\makebox(0,0)[lt]{\lineheight{1.25}\smash{\begin{tabular}[t]{l}PVI\end{tabular}}}}%
  \end{picture}%
\endgroup%

%% file: images/s3_painleve_links.pdf_tex
\begingroup%
  \makeatletter%
  \providecommand\color[2][]{%
    \errmessage{(Inkscape) Color is used for the text in Inkscape, but the package 'color.sty' is not loaded}%
    \renewcommand\color[2][]{}%
  }%
  \providecommand\transparent[1]{%
    \errmessage{(Inkscape) Transparency is used (non-zero) for the text in Inkscape, but the package 'transparent.sty' is not loaded}%
    \renewcommand\transparent[1]{}%
  }%
  \providecommand\rotatebox[2]{#2}%
  \newcommand*\fsize{\dimexpr\f@size pt\relax}%
  \newcommand*\lineheight[1]{\fontsize{\fsize}{#1\fsize}\selectfont}%
  \ifx\svgwidth\undefined%
    \setlength{\unitlength}{337.30014831bp}%
    \ifx\svgscale\undefined%
      \relax%
    \else%
      \setlength{\unitlength}{\unitlength * \real{\svgscale}}%
    \fi%
  \else%
    \setlength{\unitlength}{\svgwidth}%
  \fi%
  \global\let\svgwidth\undefined%
  \global\let\svgscale\undefined%
  \makeatother%
  \begin{picture}(1,0.75205057)%
    \lineheight{1}%
    \setlength\tabcolsep{0pt}%
    \put(0,0){\includegraphics[width=\unitlength,page=1]{s3_painleve_links.pdf}}%
    \put(0.66247737,0.57065267){\color[rgb]{0,0,0}\makebox(0,0)[lt]{\lineheight{1.25}\smash{\begin{tabular}[t]{l}PII, PIIFN\end{tabular}}}}%
    \put(0,0){\includegraphics[width=\unitlength,page=2]{s3_painleve_links.pdf}}%
    \put(0.07662161,0.35681732){\color[rgb]{0,0,0}\makebox(0,0)[lt]{\lineheight{1.25}\smash{\begin{tabular}[t]{l}PIII(D6), PV(deg)\end{tabular}}}}%
    \put(0,0){\includegraphics[width=\unitlength,page=3]{s3_painleve_links.pdf}}%
    \put(0.82854968,0.38072757){\color[rgb]{0,0,0}\makebox(0,0)[lt]{\lineheight{1.25}\smash{\begin{tabular}[t]{l}PIII(D8)\end{tabular}}}}%
    \put(0,0){\includegraphics[width=\unitlength,page=4]{s3_painleve_links.pdf}}%
    \put(0.48633005,0.35897366){\color[rgb]{0,0,0}\makebox(0,0)[lt]{\lineheight{1.25}\smash{\begin{tabular}[t]{l}PIII(D7)\end{tabular}}}}%
    \put(0,0){\includegraphics[width=\unitlength,page=5]{s3_painleve_links.pdf}}%
    \put(0.1345779,0.00833718){\color[rgb]{0,0,0}\makebox(0,0)[lt]{\lineheight{1.25}\smash{\begin{tabular}[t]{l}PIV\end{tabular}}}}%
    \put(0,0){\includegraphics[width=\unitlength,page=6]{s3_painleve_links.pdf}}%
    \put(0.5095329,0.00833718){\color[rgb]{0,0,0}\makebox(0,0)[lt]{\lineheight{1.25}\smash{\begin{tabular}[t]{l}PV\end{tabular}}}}%
    \put(0,0){\includegraphics[width=\unitlength,page=7]{s3_painleve_links.pdf}}%
    \put(0.86458617,0.00833716){\color[rgb]{0,0,0}\makebox(0,0)[lt]{\lineheight{1.25}\smash{\begin{tabular}[t]{l}PVI\end{tabular}}}}%
    \put(0,0){\includegraphics[width=\unitlength,page=8]{s3_painleve_links.pdf}}%
    \put(0.32683494,0.5775546){\color[rgb]{0,0,0}\makebox(0,0)[lt]{\lineheight{1.25}\smash{\begin{tabular}[t]{l}PI\end{tabular}}}}%
  \end{picture}%
\endgroup%

%% file: images/torus_skeleta.pdf_tex
\begingroup%
  \makeatletter%
  \providecommand\color[2][]{%
    \errmessage{(Inkscape) Color is used for the text in Inkscape, but the package 'color.sty' is not loaded}%
    \renewcommand\color[2][]{}%
  }%
  \providecommand\transparent[1]{%
    \errmessage{(Inkscape) Transparency is used (non-zero) for the text in Inkscape, but the package 'transparent.sty' is not loaded}%
    \renewcommand\transparent[1]{}%
  }%
  \providecommand\rotatebox[2]{#2}%
  \newcommand*\fsize{\dimexpr\f@size pt\relax}%
  \newcommand*\lineheight[1]{\fontsize{\fsize}{#1\fsize}\selectfont}%
  \ifx\svgwidth\undefined%
    \setlength{\unitlength}{178.98235892bp}%
    \ifx\svgscale\undefined%
      \relax%
    \else%
      \setlength{\unitlength}{\unitlength * \real{\svgscale}}%
    \fi%
  \else%
    \setlength{\unitlength}{\svgwidth}%
  \fi%
  \global\let\svgwidth\undefined%
  \global\let\svgscale\undefined%
  \makeatother%
  \begin{picture}(1,0.97063714)%
    \lineheight{1}%
    \setlength\tabcolsep{0pt}%
    \put(0,0){\includegraphics[width=\unitlength,page=1]{torus_skeleta.pdf}}%
    \put(0.36885521,0.67972463){\color[rgb]{0,0,0}\makebox(0,0)[lt]{\lineheight{1.25}\smash{\begin{tabular}[t]{l}PI\end{tabular}}}}%
    \put(0,0){\includegraphics[width=\unitlength,page=2]{torus_skeleta.pdf}}%
    \put(0.64228309,0.6729776){\color[rgb]{0,0,0}\makebox(0,0)[lt]{\lineheight{1.25}\smash{\begin{tabular}[t]{l}PII, PIIFN\end{tabular}}}}%
    \put(0,0){\includegraphics[width=\unitlength,page=3]{torus_skeleta.pdf}}%
    \put(0.09727475,0.35807223){\color[rgb]{0,0,0}\makebox(0,0)[lt]{\lineheight{1.25}\smash{\begin{tabular}[t]{l}PIII(D6), PV(deg)\end{tabular}}}}%
    \put(0,0){\includegraphics[width=\unitlength,page=4]{torus_skeleta.pdf}}%
    \put(0.48554255,0.35807223){\color[rgb]{0,0,0}\makebox(0,0)[lt]{\lineheight{1.25}\smash{\begin{tabular}[t]{l}PIII(D7)\end{tabular}}}}%
    \put(0,0){\includegraphics[width=\unitlength,page=5]{torus_skeleta.pdf}}%
    \put(0.81168198,0.35807223){\color[rgb]{0,0,0}\makebox(0,0)[lt]{\lineheight{1.25}\smash{\begin{tabular}[t]{l}PIII(D8)\end{tabular}}}}%
    \put(0,0){\includegraphics[width=\unitlength,page=6]{torus_skeleta.pdf}}%
    \put(0.18565202,0.03325235){\color[rgb]{0,0,0}\makebox(0,0)[lt]{\lineheight{1.25}\smash{\begin{tabular}[t]{l}PIV\end{tabular}}}}%
    \put(0,0){\includegraphics[width=\unitlength,page=7]{torus_skeleta.pdf}}%
    \put(0.52593461,0.03325242){\color[rgb]{0,0,0}\makebox(0,0)[lt]{\lineheight{1.25}\smash{\begin{tabular}[t]{l}PV\end{tabular}}}}%
    \put(0.82244476,0.03325242){\color[rgb]{0,0,0}\makebox(0,0)[lt]{\lineheight{1.25}\smash{\begin{tabular}[t]{l}PVI\end{tabular}}}}%
    \put(0,0){\includegraphics[width=\unitlength,page=8]{torus_skeleta.pdf}}%
  \end{picture}%
\endgroup%

%% file: sections/A-side_handleslide.tex

\section{Recollections on Legendrian handle calculus}

A Weinstein manifold is by definition an exact symplectic manifold $(X, \omega = \dd \lambda)$ such that the `Liouville' vector field $Z$ defined via
$        \imath_Z \dd \lambda = \lambda$
    has the property that there exists a proper, bounded below, Morse function $\phi: X \to \R$, satisfying $
        \dd \phi(Z) \geq \delta \norm{Z}^2
    $
    for some $\delta > 0$, with respect to some choice of metric.
    A Weinstein manifold is of finite type if $\phi$ has finitely many critical points; all manifolds considered here will be of finite type.   
    

    The skeleton of a Weinstein manifold $(X, \lambda, \phi)$ is the set
    \[
        \mathrm{Skel}(X,\lambda, \phi) = \bigcap_{t > 0} Z^{-t}(X),
    \]
    that is, the union of  stable manifolds of  critical points of $\phi$ with respect to the flow of $Z$.


Some examples of Weinstein manifolds are smooth complex affine varieties and cotangent bundles of closed manifolds. We refer to the standard textbook \cite{ce_stein_weinstein} for details.

The key consequence of this definition is that the Morse function $\phi$ induces a handlebody decomposition of $X$ compatible with the symplectic structure \cite{weinstein_original}. One may thus construct Weinstein manifolds by attaching handles along increasing sublevel sets, where the attaching spheres are given by intersecting with the skeleton: suppose $x \in X$ is a critical point of $\phi$ with critical value $\phi(x) = y$. Then for small $\epsilon$, the sublevel set $X_{\phi \leq y+\epsilon}$ is obtained from $X_{\phi \leq y-\epsilon}$ by attaching a handle along the intersection of $X_{\phi = y-\epsilon}$ with the stratum of $\mathrm{Skel}(X, \lambda, \phi)$ corresponding to the stable manifold of $x$. 

We specialize to dimension $4$. Following the above, we can obtain any connected Weinstein $4$-manifold from the $4$-ball by attaching $1$- and $2$-handles along its contact boundary $S^3$. 

One can now draw the attaching spheres of these handles in a front projection to $\R^2$, and manipulate them according to the Legendrian Reidemeister moves from \zcref{fig:reidemeister_moves}.  These simply change the presentation of the isotropic attaching spheres.
More nontrivial changes of Morse function are captured by Gompf's moves from \zcref{fig:gompf_moves}, as well as Legendrian handle slides and handle cancellations \cite{gompf_stein_handlebodies}, see also \cite{ding_geiges_handle_moves}.  These are depicted in \zcref{fig:slides_cancellations}.

\begin{figure}
    \centering
    \def\svgwidth{0.4\textwidth}
    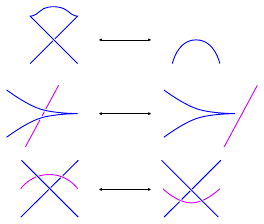
    \caption{Legendrian Reidemeister moves}
    \label{fig:reidemeister_moves}
\end{figure}

\begin{figure}
    \centering
    \def\svgwidth{0.8\textwidth}
    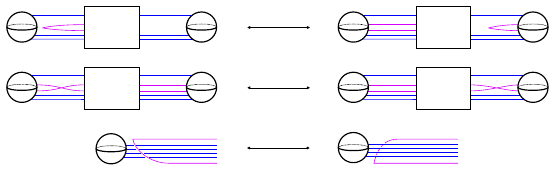
    \caption{Gompf moves}
    \label{fig:gompf_moves}
\end{figure}

\begin{figure}
    \centering
    \def\svgwidth{0.8\textwidth}
    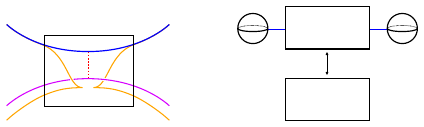
    \caption{Left: a local picture of a handle slide corresponding to the Reeb chord indicated by the red vertical line. Outside this region, the orange curve coincides with the blue curve or is a small Reeb pushoff of the magenta curve. 
    Right: a $1$- and $2$-handle in cancelling configuration. $T'$ is obtained from the Legendrian tangle $T$ by removing the blue curve.}
    \label{fig:slides_cancellations}
\end{figure}


In the special case when some sublevel set is (a Weinstein domain completing to a) cotangent bundle,  $X_{\phi < r} \cong T^*M$, there is another convenient way  of presenting Legendrian attaching spheres: as the Legendrian conormal lifts of curves drawn on the surface.\footnote{The typical front diagrams mentioned above are essentially the case $T^* {\R^2}$; the `no vertical tangencies' restriction corresponds to the fact that most authors prefer their Legendrians to lie in $\R^3 =J^1 \R^2 \subsetneq S^* \R^2 \subsetneq S^3 = \partial_\infty T^* \R^2$. 
This restriction is irrelevant for the purpose of handle attachment, and anyway has no natural analogue in general $T^*M$.  Dropping it corresponds to working in $S^* \R^2$ in this case.}  


Casals and Murphy explained how iterated Lefschetz fibration of affine varieties can be used to produce handle presentations for the underlying Weinstein manifolds \cite[Recipe 3.3]{casals-murphy}. 

We will need to translate between the two presentations mentioned above: handle attachments building from $T^* \R^2$ via Gompf diagrams, and handle attachments building from more general $T^*M$; we do this using \cite[Theorem 8.1]{acu_complements}.

\section{Toric models on the A-side.}


To prove \zcref{thm:torus_skeleta}, we apply the algorithm from \cite[Theorem 8.1]{acu_complements} to the diagrams in \zcref{fig:torus_skeleta}, and match the results with the links in \zcref{fig:painleve_handlebodies} via Legendrian handle calculus. 

In our handlebody calculus diagrams, we will often combine several simple Reidemeister moves, in particular the move from \zcref{fig:r1_cusp_move}.

\begin{figure}
    \centering
    \def\svgwidth{0.5\textwidth}
    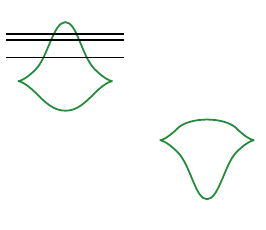
    \caption{Sliding the cusps of an unknot over horizontal strands.}
    \label{fig:r1_cusp_move}
\end{figure}

See \zcref{fig:p1} for PI, \zcref{fig:p2} for PII and PII(FN), \zcref{fig:p3d6}, \zcref{fig:PIIID6-2}, \zcref{fig:PIIID6-3} for PIII(D6) and PV(deg), \zcref{fig:p4} for PIV, \zcref{fig:p5}, \zcref{fig:p5_2} for PV, and \zcref{fig:PVI1}, \zcref{fig:PVI2}, \zcref{fig:PVI3}, \zcref{fig:PVI-continued} for PVI. 

For PVI, before the handle calculus, we perform a mutation of the curve configuration from \zcref{fig:torus_skeleta} along the blue curve, yielding \zcref{fig:p6_mutation}.
For more on mutations of curve configurations on tori, see \cite{STW}; we only need the fact that the Weinstein manifolds obtained by attaching handles along the Legendrian lifts of the curve configurations before and after mutation are Weinstein deformation equivalent.

\begin{figure}
    \centering
    \def\svgwidth{0.95\textwidth}
    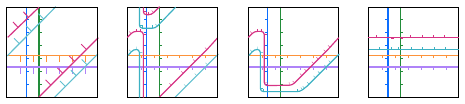
    \caption{Mutating the curve configuration for PVI along the blue curve. The computation in \zcref{fig:PVI1} uses a rotation of the last picture.}
    \label{fig:p6_mutation}
\end{figure}

\begin{figure}
    \centering
    \def\svgwidth{0.90\textwidth}
    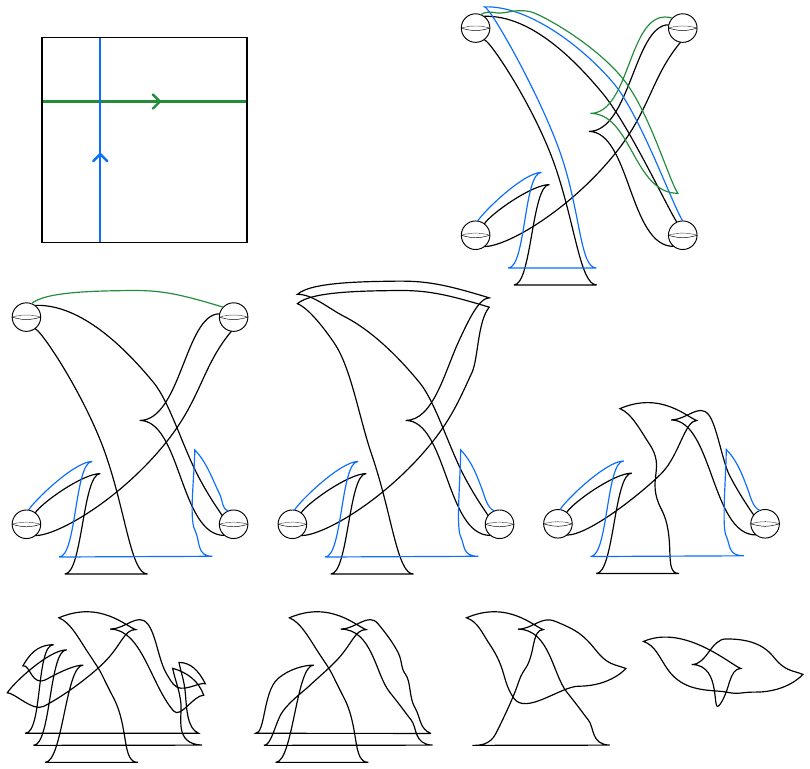
    \caption{Simplifications for PI.}
    
    \label{fig:p1}
\end{figure}

\begin{figure}[htbp]
    \centering
    \def\svgwidth{0.95\textwidth}
    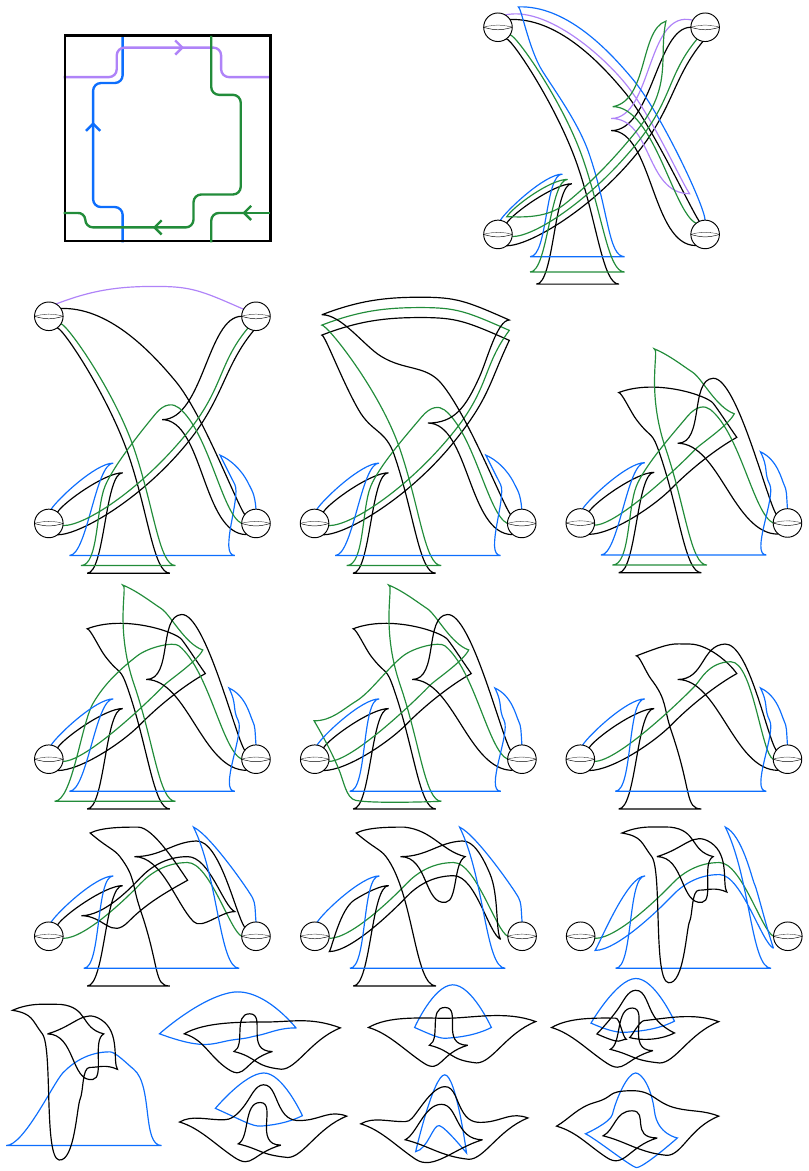
    \caption{Simplifications for PII and PII(FN). Undoing the evident Reidemeister 1 move in the last figure yields the $(2,4)$-torus link.  }
    \label{fig:p2}
\end{figure}

\begin{figure}[htbp]
    \centering
    \def\svgwidth{0.95\textwidth}
    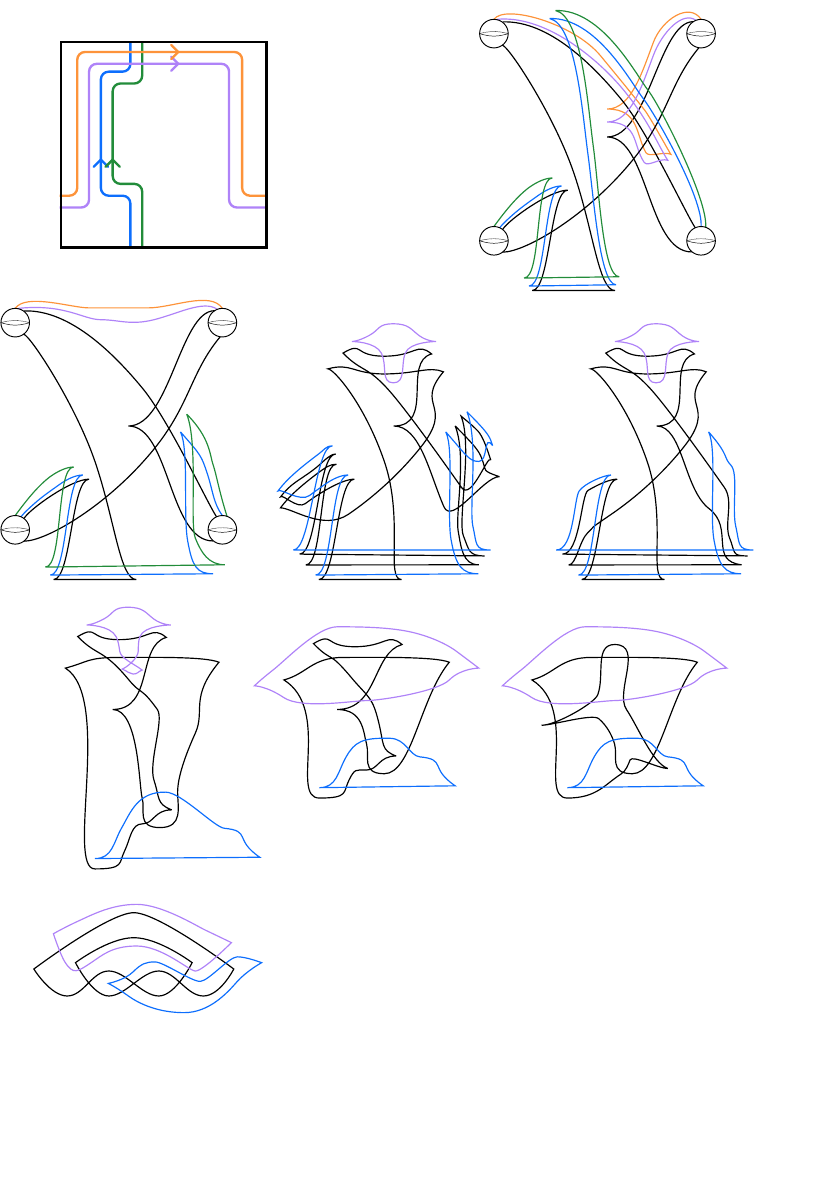
    \caption{Simplifications for PIII(D6). 
    }
    
    \label{fig:p3d6}
\end{figure}

\begin{figure}
    \centering
    \includegraphics[width=0.95\textwidth]{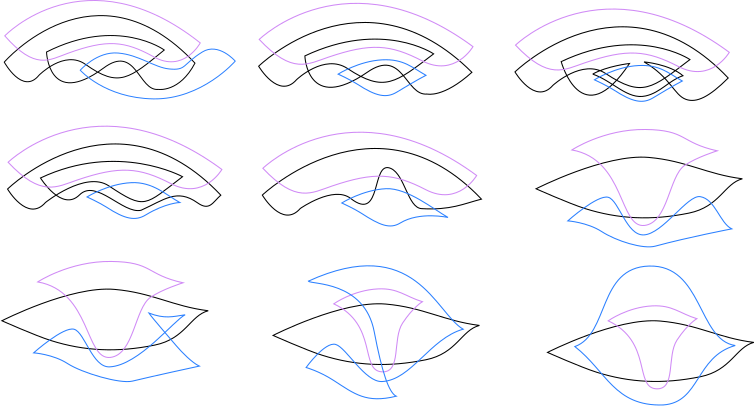}
    \caption{Simplification for \(\PIIIDsix\), continued}
    \label{fig:PIIID6-2}
\end{figure}

\begin{figure}
    \centering
    \includegraphics[width=0.95\linewidth]{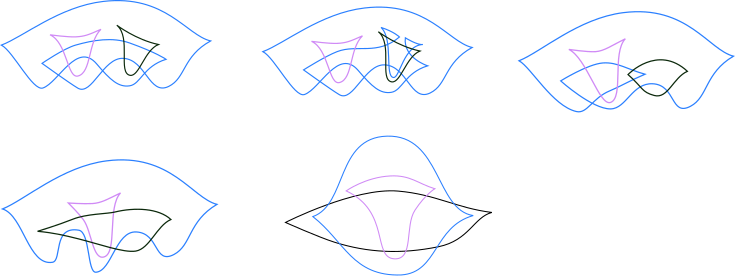}
    \caption{Simplification for \(\PIIIDsix\), from another direction.}
    \label{fig:PIIID6-3}
\end{figure}

\begin{figure}[htbp]
    \centering
    \def\svgwidth{0.95\textwidth}
    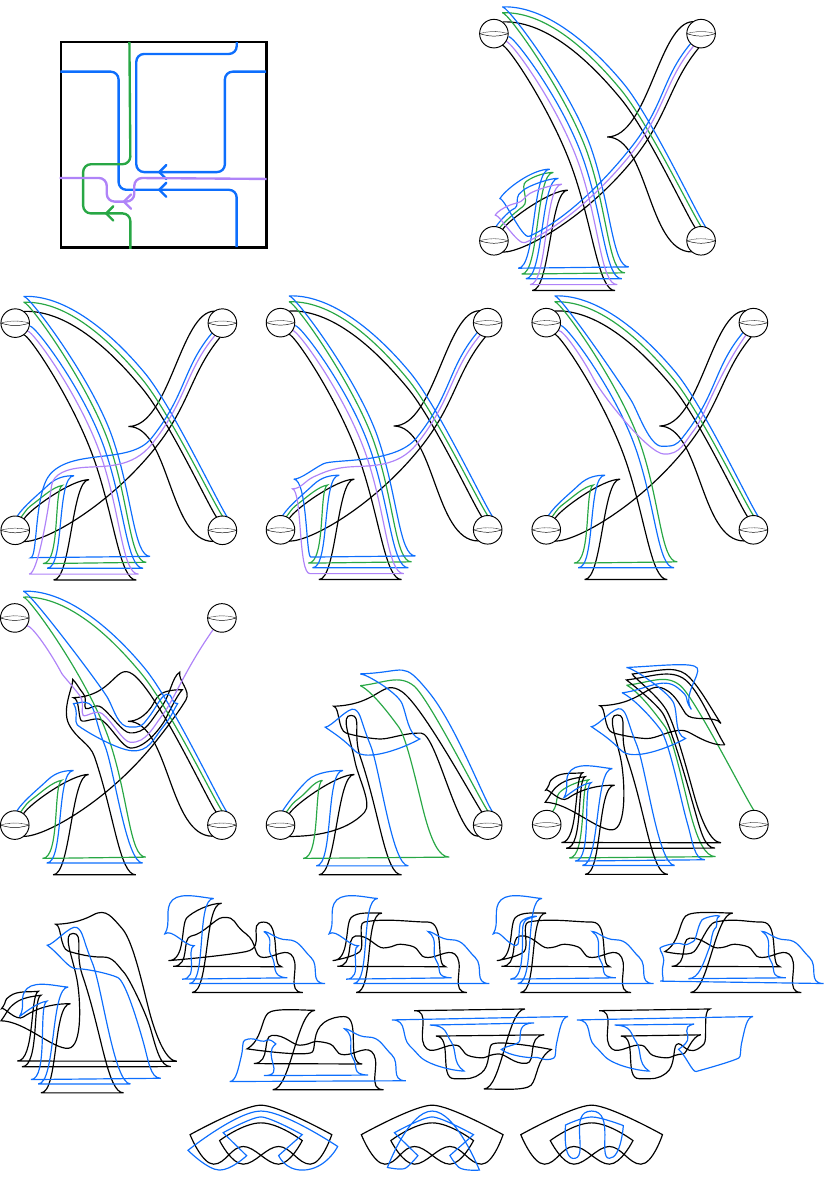
    \caption{Simplifications for PIII(D7). Observe that the second to last row contains a 180-degree rotation of the projection.}
    
    \label{fig:p3d7}
\end{figure}

\begin{figure}[htbp]
    \centering
    \def\svgwidth{0.95\textwidth}
    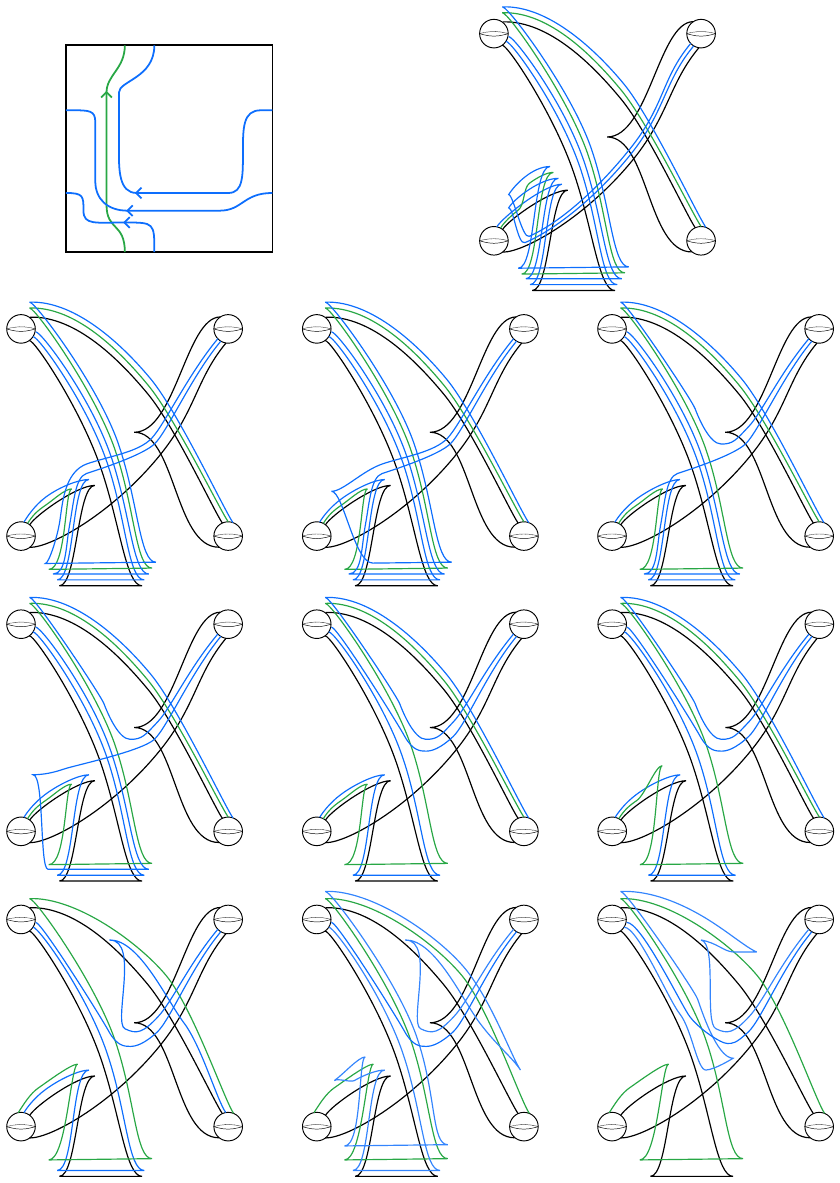
    \caption{First set of simplifications for PIII(D8).}
    
    \label{fig:p3d8_1}
\end{figure}
\begin{figure}[htbp]
    \centering
    \def\svgwidth{0.95\textwidth}
    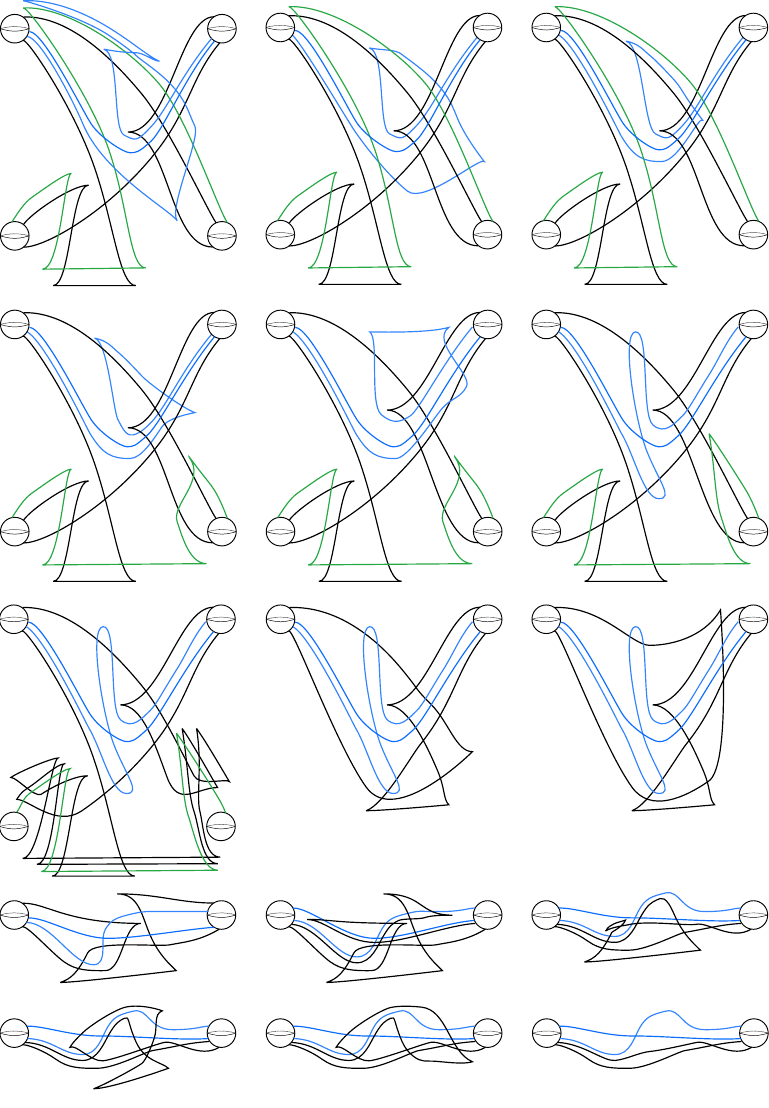
    \caption{Second set of simplifications for PIII(D8).}
    
    \label{fig:p3d8_2}
\end{figure}

\begin{figure}[htbp]
    \centering
    \def\svgwidth{0.95\textwidth}
    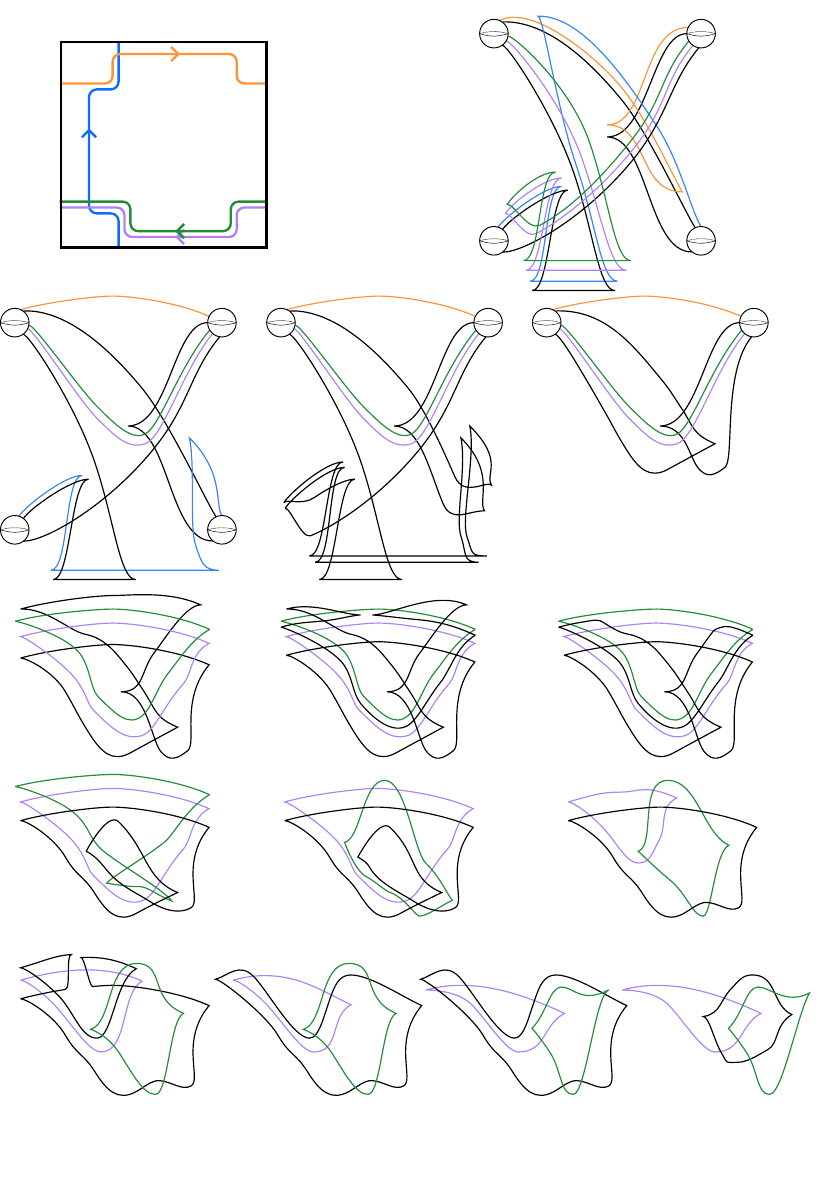
    \caption{Simplifications for PIV.}
    
    \label{fig:p4}
\end{figure}

\begin{figure}[htbp]
    \centering
    \def\svgwidth{0.95\textwidth}
    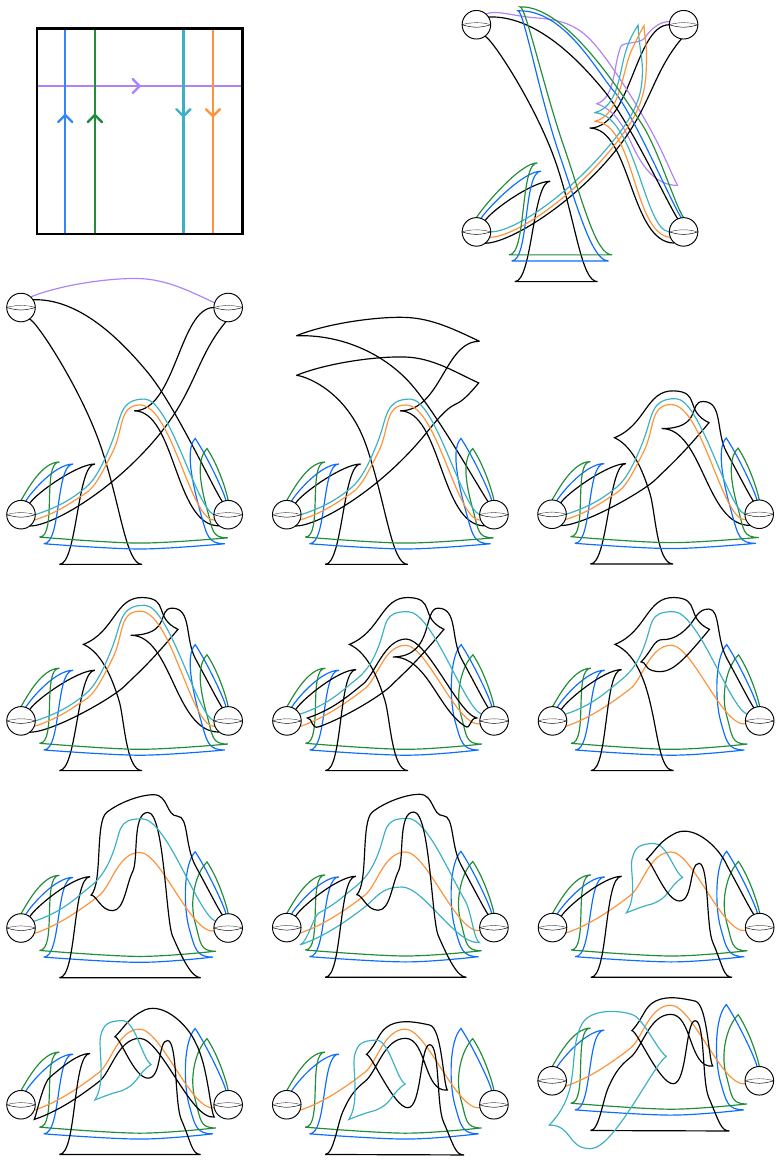
    \caption{First set of simplifications for PV.}
    
    \label{fig:p5}
\end{figure}

\begin{figure}[htbp]
    \centering
    \def\svgwidth{0.95\textwidth}
    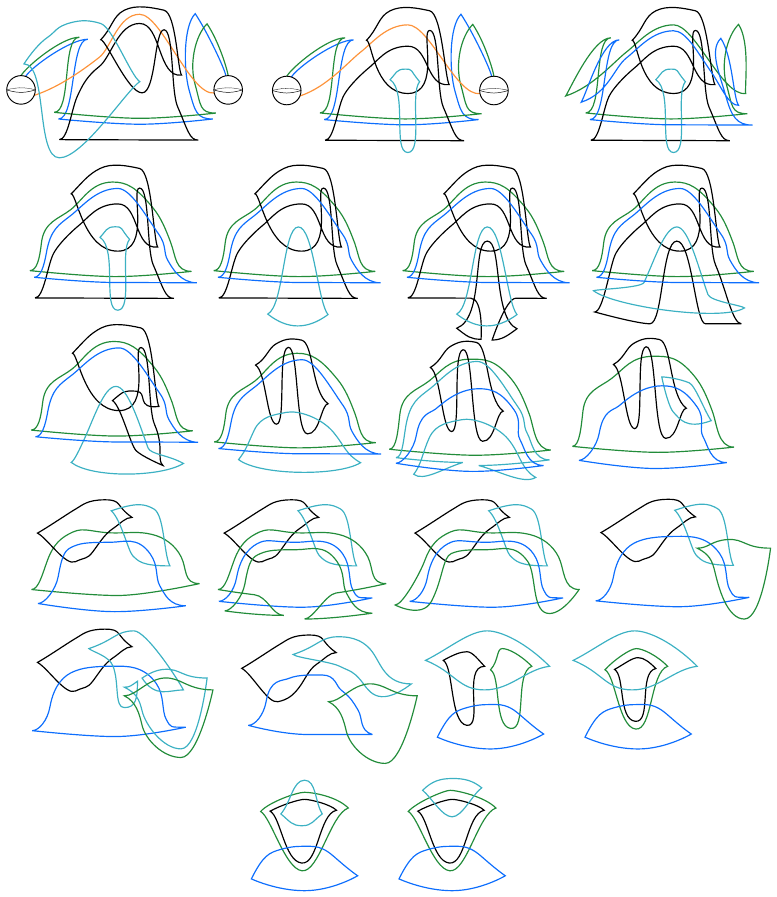
    \caption{Second set of simplifications for PV.}
    
    \label{fig:p5_2}
\end{figure}

\begin{figure}[htbp]
    \centering
    \scalebox{1}[1.03]{%
    \includegraphics[width = 1\linewidth]{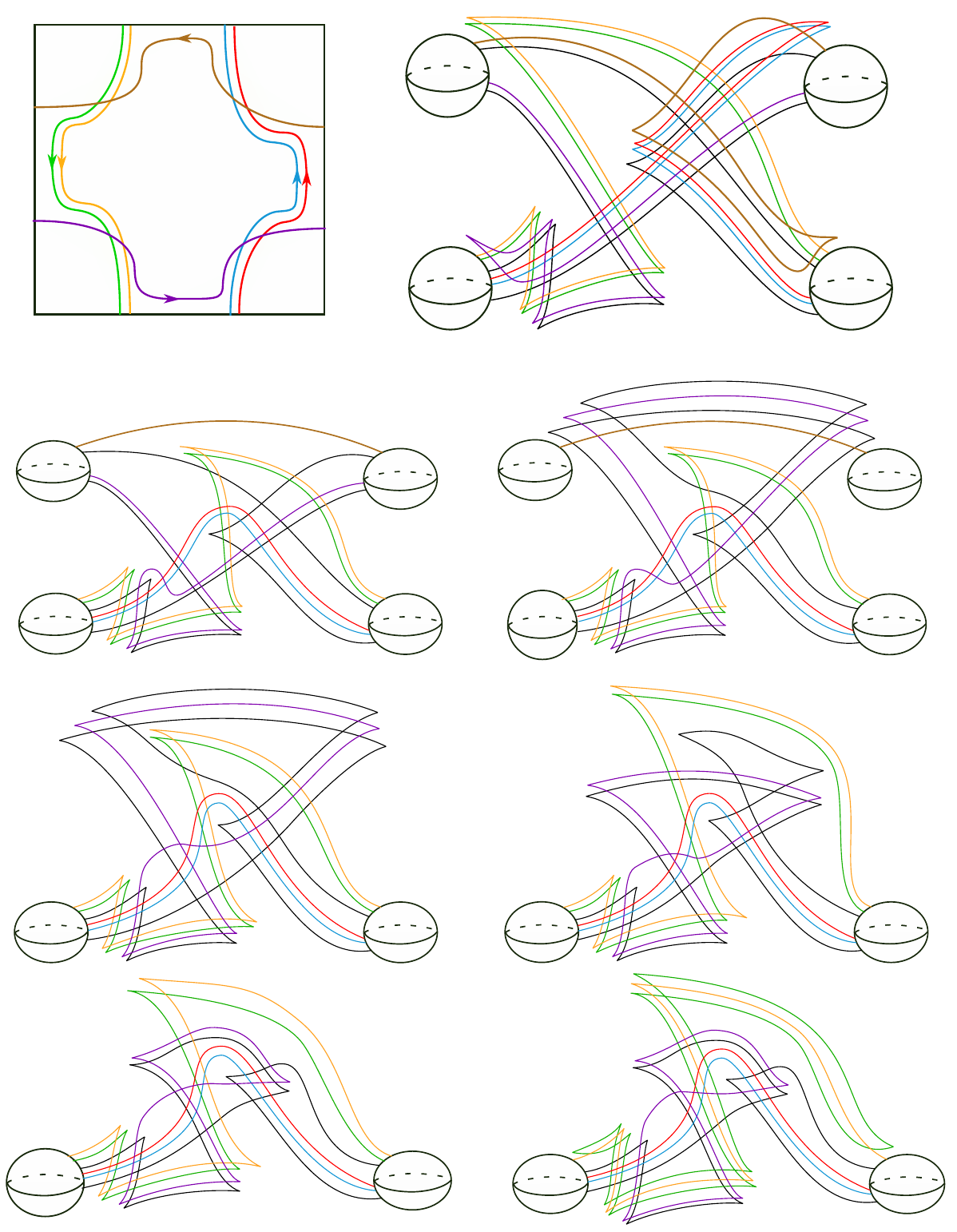}%
    }
    \caption{The first set of simplifications of \(\PVI\).}
    \label{fig:PVI1}
\end{figure}

\begin{figure}[htbp]
    \centering
        \scalebox{1}[1.03]{%
        \includegraphics[width = 1\linewidth]{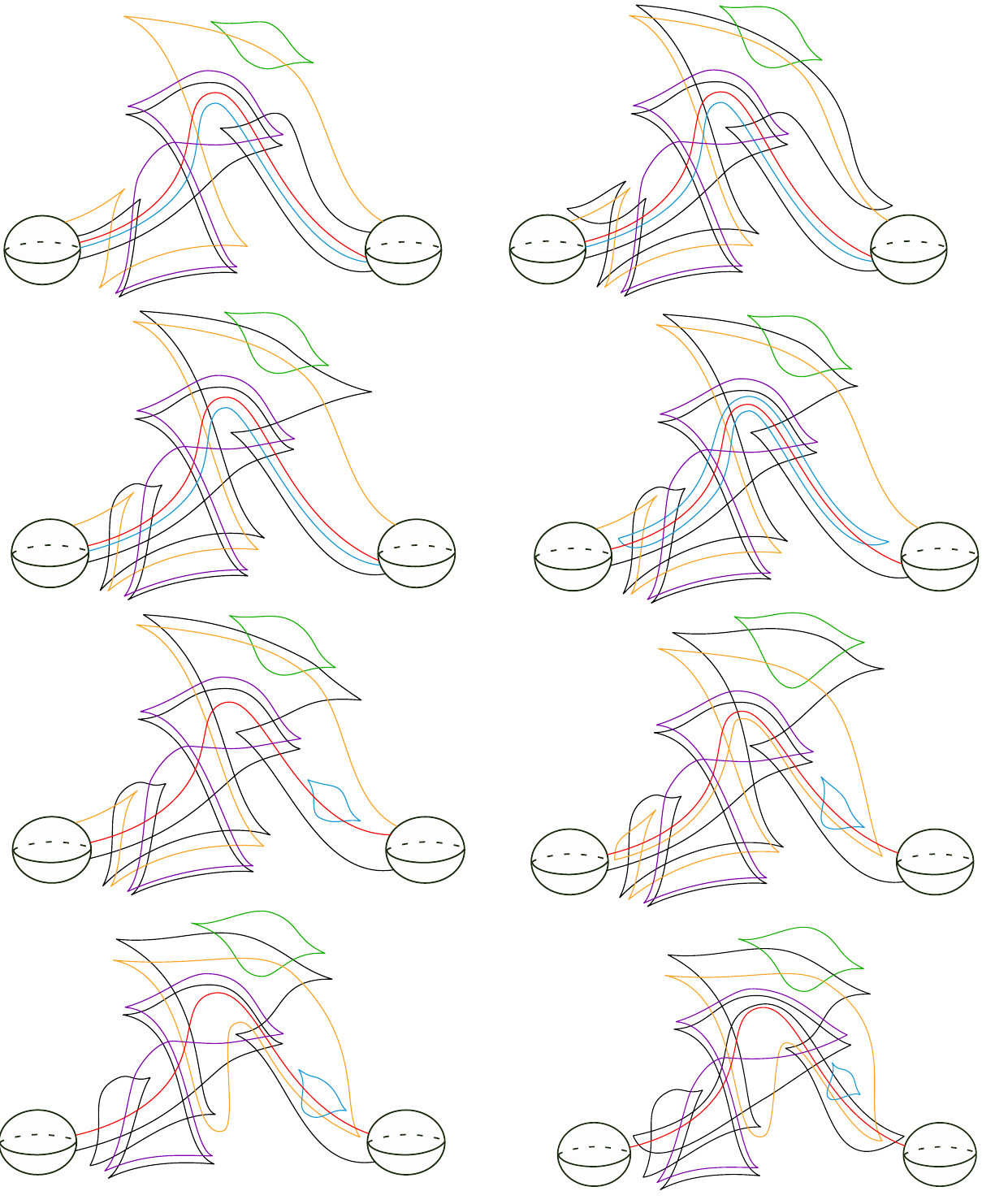}%
        } 
        \caption{The second set of simplifications of \(\PVI\).}
        \label{fig:PVI2}
    \end{figure}

    \begin{figure}[htbp]
        \centering
        \includegraphics[width = 1\linewidth]{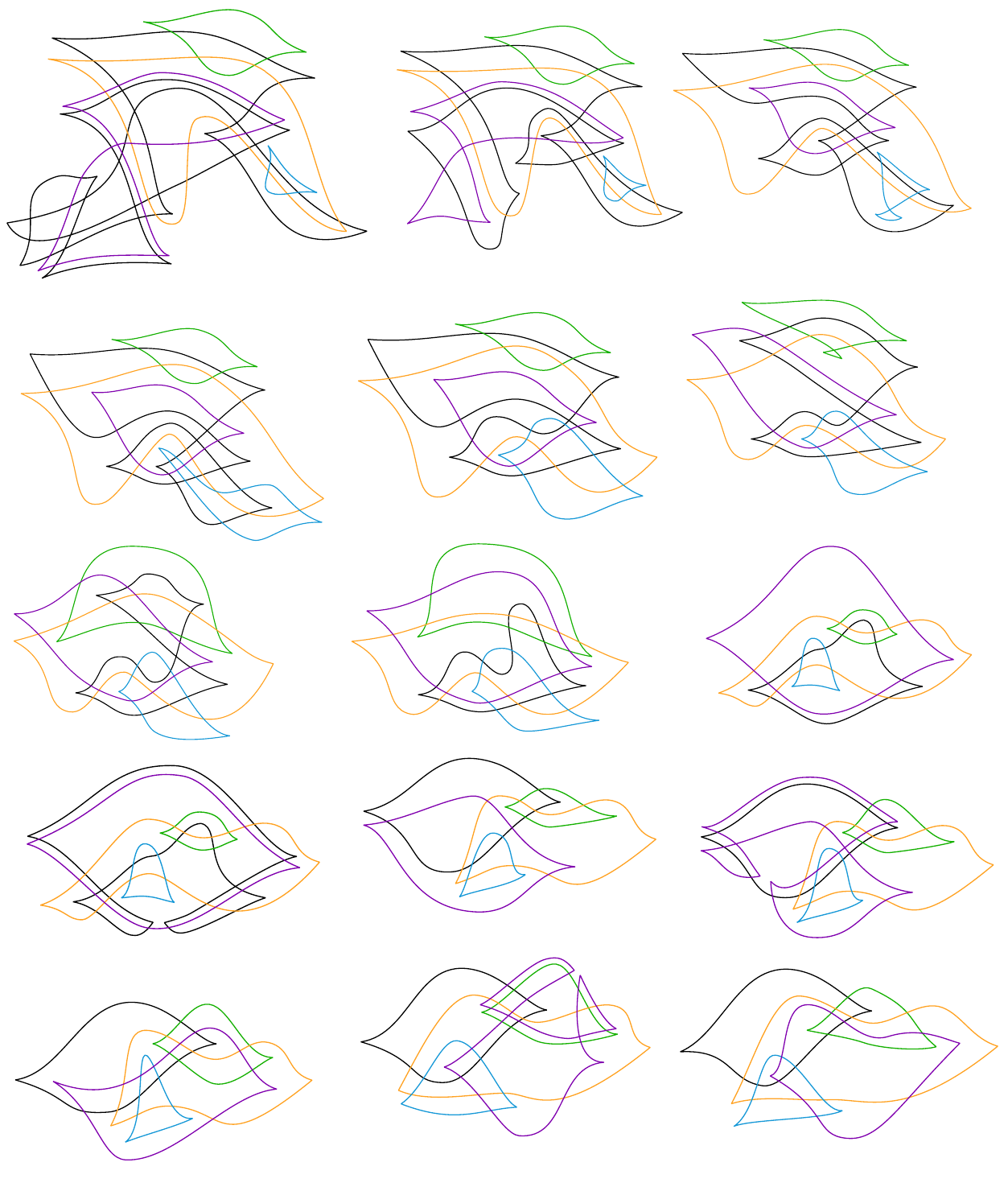}
        \caption{The third set of simplifications of \(\PVI\).}
        \label{fig:PVI3}
    \end{figure}

    \begin{figure}[htbp]
        \centering
        \includegraphics[width=1\linewidth]{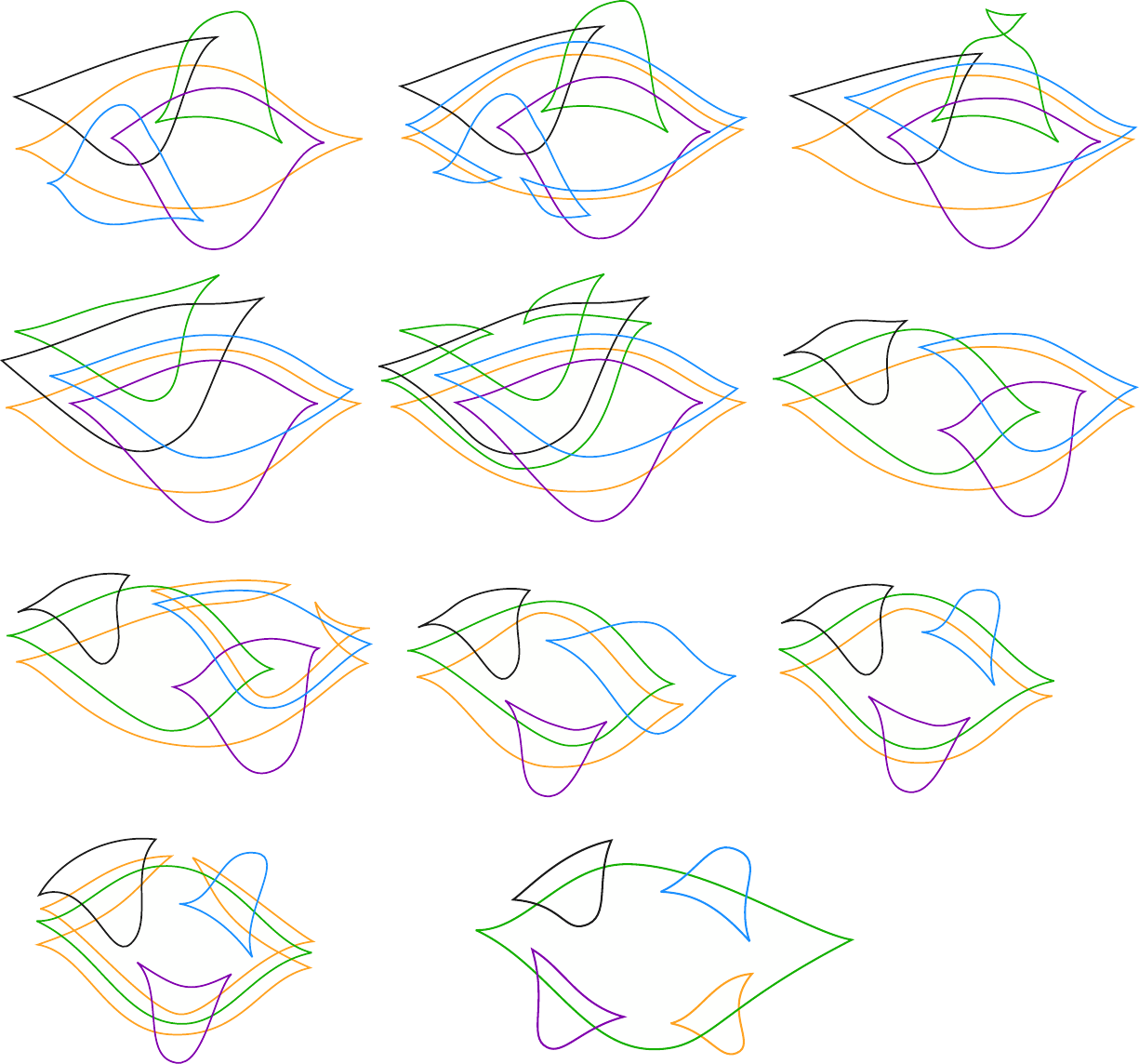}
        \caption{The last set of simplifications of \(\PVI\).}
        \label{fig:PVI-continued}
    \end{figure}

%% file: images/reidemeister.pdf_tex
\begingroup%
  \makeatletter%
  \providecommand\color[2][]{%
    \errmessage{(Inkscape) Color is used for the text in Inkscape, but the package 'color.sty' is not loaded}%
    \renewcommand\color[2][]{}%
  }%
  \providecommand\transparent[1]{%
    \errmessage{(Inkscape) Transparency is used (non-zero) for the text in Inkscape, but the package 'transparent.sty' is not loaded}%
    \renewcommand\transparent[1]{}%
  }%
  \providecommand\rotatebox[2]{#2}%
  \newcommand*\fsize{\dimexpr\f@size pt\relax}%
  \newcommand*\lineheight[1]{\fontsize{\fsize}{#1\fsize}\selectfont}%
  \ifx\svgwidth\undefined%
    \setlength{\unitlength}{126.36566523bp}%
    \ifx\svgscale\undefined%
      \relax%
    \else%
      \setlength{\unitlength}{\unitlength * \real{\svgscale}}%
    \fi%
  \else%
    \setlength{\unitlength}{\svgwidth}%
  \fi%
  \global\let\svgwidth\undefined%
  \global\let\svgscale\undefined%
  \makeatother%
  \begin{picture}(1,0.8473048)%
    \lineheight{1}%
    \setlength\tabcolsep{0pt}%
    \put(0,0){\includegraphics[width=\unitlength,page=1]{reidemeister.pdf}}%
  \end{picture}%
\endgroup%

%% file: images/gompf.pdf_tex
\begingroup%
  \makeatletter%
  \providecommand\color[2][]{%
    \errmessage{(Inkscape) Color is used for the text in Inkscape, but the package 'color.sty' is not loaded}%
    \renewcommand\color[2][]{}%
  }%
  \providecommand\transparent[1]{%
    \errmessage{(Inkscape) Transparency is used (non-zero) for the text in Inkscape, but the package 'transparent.sty' is not loaded}%
    \renewcommand\transparent[1]{}%
  }%
  \providecommand\rotatebox[2]{#2}%
  \newcommand*\fsize{\dimexpr\f@size pt\relax}%
  \newcommand*\lineheight[1]{\fontsize{\fsize}{#1\fsize}\selectfont}%
  \ifx\svgwidth\undefined%
    \setlength{\unitlength}{266.00882703bp}%
    \ifx\svgscale\undefined%
      \relax%
    \else%
      \setlength{\unitlength}{\unitlength * \real{\svgscale}}%
    \fi%
  \else%
    \setlength{\unitlength}{\svgwidth}%
  \fi%
  \global\let\svgwidth\undefined%
  \global\let\svgscale\undefined%
  \makeatother%
  \begin{picture}(1,0.306707)%
    \lineheight{1}%
    \setlength\tabcolsep{0pt}%
    \put(0,0){\includegraphics[width=\unitlength,page=1]{gompf.pdf}}%
  \end{picture}%
\endgroup%

%% file: images/slide_cancel.pdf_tex
\begingroup%
  \makeatletter%
  \providecommand\color[2][]{%
    \errmessage{(Inkscape) Color is used for the text in Inkscape, but the package 'color.sty' is not loaded}%
    \renewcommand\color[2][]{}%
  }%
  \providecommand\transparent[1]{%
    \errmessage{(Inkscape) Transparency is used (non-zero) for the text in Inkscape, but the package 'transparent.sty' is not loaded}%
    \renewcommand\transparent[1]{}%
  }%
  \providecommand\rotatebox[2]{#2}%
  \newcommand*\fsize{\dimexpr\f@size pt\relax}%
  \newcommand*\lineheight[1]{\fontsize{\fsize}{#1\fsize}\selectfont}%
  \ifx\svgwidth\undefined%
    \setlength{\unitlength}{203.29512468bp}%
    \ifx\svgscale\undefined%
      \relax%
    \else%
      \setlength{\unitlength}{\unitlength * \real{\svgscale}}%
    \fi%
  \else%
    \setlength{\unitlength}{\svgwidth}%
  \fi%
  \global\let\svgwidth\undefined%
  \global\let\svgscale\undefined%
  \makeatother%
  \begin{picture}(1,0.29963189)%
    \lineheight{1}%
    \setlength\tabcolsep{0pt}%
    \put(0,0){\includegraphics[width=\unitlength,page=1]{slide_cancel.pdf}}%
    \put(0.76060406,0.05581727){\color[rgb]{0,0,0}\makebox(0,0)[lt]{\lineheight{1.25}\smash{\begin{tabular}[t]{l}$T'$\end{tabular}}}}%
    \put(0.76192173,0.2303333){\color[rgb]{0,0,0}\makebox(0,0)[lt]{\lineheight{1.25}\smash{\begin{tabular}[t]{l}$T$\end{tabular}}}}%
  \end{picture}%
\endgroup%

%% file: images/r1_cusp.pdf_tex
\begingroup%
  \makeatletter%
  \providecommand\color[2][]{%
    \errmessage{(Inkscape) Color is used for the text in Inkscape, but the package 'color.sty' is not loaded}%
    \renewcommand\color[2][]{}%
  }%
  \providecommand\transparent[1]{%
    \errmessage{(Inkscape) Transparency is used (non-zero) for the text in Inkscape, but the package 'transparent.sty' is not loaded}%
    \renewcommand\transparent[1]{}%
  }%
  \providecommand\rotatebox[2]{#2}%
  \newcommand*\fsize{\dimexpr\f@size pt\relax}%
  \newcommand*\lineheight[1]{\fontsize{\fsize}{#1\fsize}\selectfont}%
  \ifx\svgwidth\undefined%
    \setlength{\unitlength}{127.55905512bp}%
    \ifx\svgscale\undefined%
      \relax%
    \else%
      \setlength{\unitlength}{\unitlength * \real{\svgscale}}%
    \fi%
  \else%
    \setlength{\unitlength}{\svgwidth}%
  \fi%
  \global\let\svgwidth\undefined%
  \global\let\svgscale\undefined%
  \makeatother%
  \begin{picture}(1,0.86302219)%
    \lineheight{1}%
    \setlength\tabcolsep{0pt}%
    \put(0,0){\includegraphics[width=\unitlength,page=1]{r1_cusp.pdf}}%
    \put(0.03856884,0.69669519){\color[rgb]{0,0,0}\rotatebox{-90}{\makebox(0,0)[lt]{\lineheight{1.25}\smash{\begin{tabular}[t]{l}...\end{tabular}}}}}%
    \put(0,0){\includegraphics[width=\unitlength,page=2]{r1_cusp.pdf}}%
    \put(0.54967999,0.69669519){\color[rgb]{0,0,0}\rotatebox{-90}{\makebox(0,0)[lt]{\lineheight{1.25}\smash{\begin{tabular}[t]{l}...\end{tabular}}}}}%
    \put(0,0){\includegraphics[width=\unitlength,page=3]{r1_cusp.pdf}}%
    \put(0.03856884,0.27447297){\color[rgb]{0,0,0}\rotatebox{-90}{\makebox(0,0)[lt]{\lineheight{1.25}\smash{\begin{tabular}[t]{l}...\end{tabular}}}}}%
    \put(0,0){\includegraphics[width=\unitlength,page=4]{r1_cusp.pdf}}%
    \put(0.54967999,0.27447297){\color[rgb]{0,0,0}\rotatebox{-90}{\makebox(0,0)[lt]{\lineheight{1.25}\smash{\begin{tabular}[t]{l}...\end{tabular}}}}}%
  \end{picture}%
\endgroup%

%% file: images/p6_mutation.pdf_tex
\begingroup%
  \makeatletter%
  \providecommand\color[2][]{%
    \errmessage{(Inkscape) Color is used for the text in Inkscape, but the package 'color.sty' is not loaded}%
    \renewcommand\color[2][]{}%
  }%
  \providecommand\transparent[1]{%
    \errmessage{(Inkscape) Transparency is used (non-zero) for the text in Inkscape, but the package 'transparent.sty' is not loaded}%
    \renewcommand\transparent[1]{}%
  }%
  \providecommand\rotatebox[2]{#2}%
  \newcommand*\fsize{\dimexpr\f@size pt\relax}%
  \newcommand*\lineheight[1]{\fontsize{\fsize}{#1\fsize}\selectfont}%
  \ifx\svgwidth\undefined%
    \setlength{\unitlength}{223.19149468bp}%
    \ifx\svgscale\undefined%
      \relax%
    \else%
      \setlength{\unitlength}{\unitlength * \real{\svgscale}}%
    \fi%
  \else%
    \setlength{\unitlength}{\svgwidth}%
  \fi%
  \global\let\svgwidth\undefined%
  \global\let\svgscale\undefined%
  \makeatother%
  \begin{picture}(1,0.22324954)%
    \lineheight{1}%
    \setlength\tabcolsep{0pt}%
    \put(0,0){\includegraphics[width=\unitlength,page=1]{p6_mutation.pdf}}%
  \end{picture}%
\endgroup%

%% file: images/p1.pdf_tex
\begingroup%
  \makeatletter%
  \providecommand\color[2][]{%
    \errmessage{(Inkscape) Color is used for the text in Inkscape, but the package 'color.sty' is not loaded}%
    \renewcommand\color[2][]{}%
  }%
  \providecommand\transparent[1]{%
    \errmessage{(Inkscape) Transparency is used (non-zero) for the text in Inkscape, but the package 'transparent.sty' is not loaded}%
    \renewcommand\transparent[1]{}%
  }%
  \providecommand\rotatebox[2]{#2}%
  \newcommand*\fsize{\dimexpr\f@size pt\relax}%
  \newcommand*\lineheight[1]{\fontsize{\fsize}{#1\fsize}\selectfont}%
  \ifx\svgwidth\undefined%
    \setlength{\unitlength}{388.50805087bp}%
    \ifx\svgscale\undefined%
      \relax%
    \else%
      \setlength{\unitlength}{\unitlength * \real{\svgscale}}%
    \fi%
  \else%
    \setlength{\unitlength}{\svgwidth}%
  \fi%
  \global\let\svgwidth\undefined%
  \global\let\svgscale\undefined%
  \makeatother%
  \begin{picture}(1,0.94948814)%
    \lineheight{1}%
    \setlength\tabcolsep{0pt}%
    \put(0,0){\includegraphics[width=\unitlength,page=1]{p1.pdf}}%
  \end{picture}%
\endgroup%

%% file: images/p2.pdf_tex
\begingroup%
  \makeatletter%
  \providecommand\color[2][]{%
    \errmessage{(Inkscape) Color is used for the text in Inkscape, but the package 'color.sty' is not loaded}%
    \renewcommand\color[2][]{}%
  }%
  \providecommand\transparent[1]{%
    \errmessage{(Inkscape) Transparency is used (non-zero) for the text in Inkscape, but the package 'transparent.sty' is not loaded}%
    \renewcommand\transparent[1]{}%
  }%
  \providecommand\rotatebox[2]{#2}%
  \newcommand*\fsize{\dimexpr\f@size pt\relax}%
  \newcommand*\lineheight[1]{\fontsize{\fsize}{#1\fsize}\selectfont}%
  \ifx\svgwidth\undefined%
    \setlength{\unitlength}{387.90709007bp}%
    \ifx\svgscale\undefined%
      \relax%
    \else%
      \setlength{\unitlength}{\unitlength * \real{\svgscale}}%
    \fi%
  \else%
    \setlength{\unitlength}{\svgwidth}%
  \fi%
  \global\let\svgwidth\undefined%
  \global\let\svgscale\undefined%
  \makeatother%
  \begin{picture}(1,1.44783511)%
    \lineheight{1}%
    \setlength\tabcolsep{0pt}%
    \put(0,0){\includegraphics[width=\unitlength,page=1]{p2.pdf}}%
  \end{picture}%
\endgroup%

%% file: images/p3d6.pdf_tex
\begingroup%
  \makeatletter%
  \providecommand\color[2][]{%
    \errmessage{(Inkscape) Color is used for the text in Inkscape, but the package 'color.sty' is not loaded}%
    \renewcommand\color[2][]{}%
  }%
  \providecommand\transparent[1]{%
    \errmessage{(Inkscape) Transparency is used (non-zero) for the text in Inkscape, but the package 'transparent.sty' is not loaded}%
    \renewcommand\transparent[1]{}%
  }%
  \providecommand\rotatebox[2]{#2}%
  \newcommand*\fsize{\dimexpr\f@size pt\relax}%
  \newcommand*\lineheight[1]{\fontsize{\fsize}{#1\fsize}\selectfont}%
  \ifx\svgwidth\undefined%
    \setlength{\unitlength}{403.05827464bp}%
    \ifx\svgscale\undefined%
      \relax%
    \else%
      \setlength{\unitlength}{\unitlength * \real{\svgscale}}%
    \fi%
  \else%
    \setlength{\unitlength}{\svgwidth}%
  \fi%
  \global\let\svgwidth\undefined%
  \global\let\svgscale\undefined%
  \makeatother%
  \begin{picture}(1,1.40824949)%
    \lineheight{1}%
    \setlength\tabcolsep{0pt}%
    \put(0,0){\includegraphics[width=\unitlength,page=1]{p3d6.pdf}}%
  \end{picture}%
\endgroup%

%% file: images/p3d7.pdf_tex
\begingroup%
  \makeatletter%
  \providecommand\color[2][]{%
    \errmessage{(Inkscape) Color is used for the text in Inkscape, but the package 'color.sty' is not loaded}%
    \renewcommand\color[2][]{}%
  }%
  \providecommand\transparent[1]{%
    \errmessage{(Inkscape) Transparency is used (non-zero) for the text in Inkscape, but the package 'transparent.sty' is not loaded}%
    \renewcommand\transparent[1]{}%
  }%
  \providecommand\rotatebox[2]{#2}%
  \newcommand*\fsize{\dimexpr\f@size pt\relax}%
  \newcommand*\lineheight[1]{\fontsize{\fsize}{#1\fsize}\selectfont}%
  \ifx\svgwidth\undefined%
    \setlength{\unitlength}{403.05827464bp}%
    \ifx\svgscale\undefined%
      \relax%
    \else%
      \setlength{\unitlength}{\unitlength * \real{\svgscale}}%
    \fi%
  \else%
    \setlength{\unitlength}{\svgwidth}%
  \fi%
  \global\let\svgwidth\undefined%
  \global\let\svgscale\undefined%
  \makeatother%
  \begin{picture}(1,1.40824949)%
    \lineheight{1}%
    \setlength\tabcolsep{0pt}%
    \put(0,0){\includegraphics[width=\unitlength,page=1]{p3d7.pdf}}%
  \end{picture}%
\endgroup%

%% file: images/p3d8.pdf_tex
\begingroup%
  \makeatletter%
  \providecommand\color[2][]{%
    \errmessage{(Inkscape) Color is used for the text in Inkscape, but the package 'color.sty' is not loaded}%
    \renewcommand\color[2][]{}%
  }%
  \providecommand\transparent[1]{%
    \errmessage{(Inkscape) Transparency is used (non-zero) for the text in Inkscape, but the package 'transparent.sty' is not loaded}%
    \renewcommand\transparent[1]{}%
  }%
  \providecommand\rotatebox[2]{#2}%
  \newcommand*\fsize{\dimexpr\f@size pt\relax}%
  \newcommand*\lineheight[1]{\fontsize{\fsize}{#1\fsize}\selectfont}%
  \ifx\svgwidth\undefined%
    \setlength{\unitlength}{403.05827464bp}%
    \ifx\svgscale\undefined%
      \relax%
    \else%
      \setlength{\unitlength}{\unitlength * \real{\svgscale}}%
    \fi%
  \else%
    \setlength{\unitlength}{\svgwidth}%
  \fi%
  \global\let\svgwidth\undefined%
  \global\let\svgscale\undefined%
  \makeatother%
  \begin{picture}(1,1.40824949)%
    \lineheight{1}%
    \setlength\tabcolsep{0pt}%
    \put(0,0){\includegraphics[width=\unitlength,page=1]{p3d8.pdf}}%
  \end{picture}%
\endgroup%

%% file: images/p3d8_2.pdf_tex
\begingroup%
  \makeatletter%
  \providecommand\color[2][]{%
    \errmessage{(Inkscape) Color is used for the text in Inkscape, but the package 'color.sty' is not loaded}%
    \renewcommand\color[2][]{}%
  }%
  \providecommand\transparent[1]{%
    \errmessage{(Inkscape) Transparency is used (non-zero) for the text in Inkscape, but the package 'transparent.sty' is not loaded}%
    \renewcommand\transparent[1]{}%
  }%
  \providecommand\rotatebox[2]{#2}%
  \newcommand*\fsize{\dimexpr\f@size pt\relax}%
  \newcommand*\lineheight[1]{\fontsize{\fsize}{#1\fsize}\selectfont}%
  \ifx\svgwidth\undefined%
    \setlength{\unitlength}{368.96782822bp}%
    \ifx\svgscale\undefined%
      \relax%
    \else%
      \setlength{\unitlength}{\unitlength * \real{\svgscale}}%
    \fi%
  \else%
    \setlength{\unitlength}{\svgwidth}%
  \fi%
  \global\let\svgwidth\undefined%
  \global\let\svgscale\undefined%
  \makeatother%
  \begin{picture}(1,1.44019622)%
    \lineheight{1}%
    \setlength\tabcolsep{0pt}%
    \put(0,0){\includegraphics[width=\unitlength,page=1]{p3d8_2.pdf}}%
  \end{picture}%
\endgroup%

%% file: images/p4.pdf_tex
\begingroup%
  \makeatletter%
  \providecommand\color[2][]{%
    \errmessage{(Inkscape) Color is used for the text in Inkscape, but the package 'color.sty' is not loaded}%
    \renewcommand\color[2][]{}%
  }%
  \providecommand\transparent[1]{%
    \errmessage{(Inkscape) Transparency is used (non-zero) for the text in Inkscape, but the package 'transparent.sty' is not loaded}%
    \renewcommand\transparent[1]{}%
  }%
  \providecommand\rotatebox[2]{#2}%
  \newcommand*\fsize{\dimexpr\f@size pt\relax}%
  \newcommand*\lineheight[1]{\fontsize{\fsize}{#1\fsize}\selectfont}%
  \ifx\svgwidth\undefined%
    \setlength{\unitlength}{403.05827464bp}%
    \ifx\svgscale\undefined%
      \relax%
    \else%
      \setlength{\unitlength}{\unitlength * \real{\svgscale}}%
    \fi%
  \else%
    \setlength{\unitlength}{\svgwidth}%
  \fi%
  \global\let\svgwidth\undefined%
  \global\let\svgscale\undefined%
  \makeatother%
  \begin{picture}(1,1.40824949)%
    \lineheight{1}%
    \setlength\tabcolsep{0pt}%
    \put(0,0){\includegraphics[width=\unitlength,page=1]{p4.pdf}}%
  \end{picture}%
\endgroup%

%% file: images/p5.pdf_tex
\begingroup%
  \makeatletter%
  \providecommand\color[2][]{%
    \errmessage{(Inkscape) Color is used for the text in Inkscape, but the package 'color.sty' is not loaded}%
    \renewcommand\color[2][]{}%
  }%
  \providecommand\transparent[1]{%
    \errmessage{(Inkscape) Transparency is used (non-zero) for the text in Inkscape, but the package 'transparent.sty' is not loaded}%
    \renewcommand\transparent[1]{}%
  }%
  \providecommand\rotatebox[2]{#2}%
  \newcommand*\fsize{\dimexpr\f@size pt\relax}%
  \newcommand*\lineheight[1]{\fontsize{\fsize}{#1\fsize}\selectfont}%
  \ifx\svgwidth\undefined%
    \setlength{\unitlength}{374.40000519bp}%
    \ifx\svgscale\undefined%
      \relax%
    \else%
      \setlength{\unitlength}{\unitlength * \real{\svgscale}}%
    \fi%
  \else%
    \setlength{\unitlength}{\svgwidth}%
  \fi%
  \global\let\svgwidth\undefined%
  \global\let\svgscale\undefined%
  \makeatother%
  \begin{picture}(1,1.48860541)%
    \lineheight{1}%
    \setlength\tabcolsep{0pt}%
    \put(0,0){\includegraphics[width=\unitlength,page=1]{p5.pdf}}%
  \end{picture}%
\endgroup%

%% file: images/p5_2.pdf_tex
\begingroup%
  \makeatletter%
  \providecommand\color[2][]{%
    \errmessage{(Inkscape) Color is used for the text in Inkscape, but the package 'color.sty' is not loaded}%
    \renewcommand\color[2][]{}%
  }%
  \providecommand\transparent[1]{%
    \errmessage{(Inkscape) Transparency is used (non-zero) for the text in Inkscape, but the package 'transparent.sty' is not loaded}%
    \renewcommand\transparent[1]{}%
  }%
  \providecommand\rotatebox[2]{#2}%
  \newcommand*\fsize{\dimexpr\f@size pt\relax}%
  \newcommand*\lineheight[1]{\fontsize{\fsize}{#1\fsize}\selectfont}%
  \ifx\svgwidth\undefined%
    \setlength{\unitlength}{372.85793995bp}%
    \ifx\svgscale\undefined%
      \relax%
    \else%
      \setlength{\unitlength}{\unitlength * \real{\svgscale}}%
    \fi%
  \else%
    \setlength{\unitlength}{\svgwidth}%
  \fi%
  \global\let\svgwidth\undefined%
  \global\let\svgscale\undefined%
  \makeatother%
  \begin{picture}(1,1.15456612)%
    \lineheight{1}%
    \setlength\tabcolsep{0pt}%
    \put(0,0){\includegraphics[width=\unitlength,page=1]{p5_2.pdf}}%
  \end{picture}%
\endgroup%

%% file: sections/mirrorsymmetry.tex

\section{Recollections on mirror symmetry for toric varieties.} \label{sec:HMS}

We review some definitions and results, especially from \cite{FLTZ, Kuwagaki-CCC,  gammage-le}. 

\begin{definition} \cite{Geraschenko-Satriano-toric}
    A stacky fan is a collection \(\mathbf{\Sigma} = (L, N, \widehat{\Sigma}, \beta)\), where \(\widehat{\Sigma}\) is a fan in a lattice \(L\), and \(\beta: L \rightarrow N\) is a homomorphism between lattices with finite cokernel. 
Let \(X_{\widehat{\Sigma}}\) be the toric variety associated to \(\widehat{\Sigma}\). Denote the dual lattices
by \( K = \Hom (L, \bZ)\) and \(M = \Hom (N, \bZ)\). 
The dual map \(\beta^*: M \rightarrow K \) induces a map between the algebraic tori: \(T_L \rightarrow T_N\). Take \(G_\beta\) to be the kernel of this map. The toric stack associated to \(\mathbf{\Sigma}\) is defined to be \(\cX_{\mathbf{\Sigma}} = [X_{\widehat{\Sigma}}/ G_\beta]\). 
\end{definition}

\begin{definition}{\cite{FLTZ, FLTZ-stack}}
\label{def:fltz-skeleton} 
    Let \(\mathbf{\Sigma} = (L, N, \widehat{\Sigma}, \beta)\) be a stacky fan such that \(\beta\) maps \(\widehat{\Sigma}\) to a fan \({\Sigma}\) in \(N\), and each cone \(\widehat{\sigma} \in \widehat{\Sigma}\) is mapped homeomorphically to a cone \(\sigma \in \Sigma\). 

    Take \(M_\sigma = (\beta^*)^{-1} \left( K / (\widehat{\sigma}^\perp \cap K) \right)\), which is a discrete subset of \(M_\bR / \sigma^\perp\). Suppose that \(N\) is of rank \(n\).  Then 
\begin{equation}\label{eq:fltz-skeleton}
\Lambda_{\mathbf{\Sigma}} := \bigcup_{\sigma\in \Sigma} \bigcup_{\chi \in  M_\sigma} [\sigma^\perp + \chi] \times -\sigma \subset M_{\bR} / M \times N_\bR \cong T^*T^n. 
\end{equation}    
\end{definition}

The subset $\Lambda_{\mathbf{\Sigma}}$ (often referred to as the FLTZ skeleton) 
was introduced in \cite{FLTZ} in order to give a (then conjectural, now established) microlocal characterization of a category introduced previously \cite{bondal}, and to correspondingly formulate the (then conjectural, now established) mirror symmetry:

\begin{theorem}\label{thm:ccc}\cite{Kuwagaki-CCC}
There is an equivalence of categories $
        \Coh \left( \mathcal{X}_{\mathbf{\Sigma}} \right) \simeq        \Sh (T^n, \Lambda_{\mathbf{\Sigma}})$.
    \end{theorem}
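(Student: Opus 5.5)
The plan is to prove the equivalence of \zcref{thm:ccc} by a local-to-global argument: establish it first for affine toric stacks, then glue using a descent statement on each side.

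On the B-side, $\Coh(\mathcal{X}_{\mathbf{\Sigma}})$ (better: its ind-completion $\mathrm{QCoh}$) is the limit over the cone poset of $\Sigma$ of $\mathrm{QCoh}(\mathcal{X}_{\mathbf{\sigma}})$. Here $\mathcal{X}_{\mathbf{\sigma}} = [X_{\widehat\sigma}/G_\beta]$ are the open affine substacks, and the transition maps are restrictions along face inclusions $\tau \subset \sigma$.

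On the A-side, $\Lambda_{\mathbf{\Sigma}}$ is a union of the pieces indexed by $\sigma$ in \eqref{eq:fltz-skeleton}. Over $M_\bR/M$ one has the corresponding statement that sheaves with microsupport in $\Lambda_{\mathbf{\Sigma}}$ form the limit of the categories $\Sh(T^n, \Lambda_{\mathbf{\sigma}})$, where $\Lambda_{\mathbf{\sigma}}$ is the skeleton for the subfan of faces of $\sigma$. I would prove this by sheafifying microsupport conditions, i.e.\ working with the Kashiwara--Schapira stack on $T^*T^n$ and covering a conic neighborhood of $\Lambda_{\mathbf{\Sigma}}$ by neighborhoods of $\Lambda_{\mathbf{\sigma}}$. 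This parallels the Zariski cover, but it needs care because $\Lambda_{\mathbf\sigma}$ are not open in $\Lambda_{\mathbf\Sigma}$. One should instead use that the nearby cycles/microlocalization of a sheaf near $-\sigma$ only sees the faces of $\sigma$. This is the step I expect to be the main obstacle: showing the gluing on the constructible side is genuinely a limit diagram (a ``Čech descent'' for microsupport in the FLTZ skeleton) and that the restriction functors match.

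For the affine case $\mathcal{X}_{\mathbf\sigma}$, I would write down the functor directly. Send the $T$-equivariant line bundle $\mathcal{O}(\chi)$, $\chi \in M_\sigma$, to the costandard sheaf on the (image in $T^n$ of) the shifted open dual cone $\chi + \sigma^\vee{}^\circ$, pushed forward from $M_\bR$ to $M_\bR/M$ (Bondal's construction, as in \cite{FLTZ}). Then:
\begin{itemize}
\item Compute Hom spaces on both sides. Homs between costandards are computed by compactly supported cohomology of intersections of translated cones, giving $\bC$ or $0$ according to whether $\chi'-\chi$ lies in the monoid. This matches $\Hom(\mathcal{O}(\chi),\mathcal{O}(\chi'))$ after summing over the $M$-translates. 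This yields full faithfulness on a generating set.
\item Show that these costandard objects generate $\Sh(T^n,\Lambda_{\mathbf\sigma})$. Use the stratification by the sets $\sigma^\perp+\chi$ and induction on strata: any sheaf with that microsupport has a finite filtration by such objects. This gives essential surjectivity.
\end{itemize}
The stacky quotient by the finite group $G_\beta$ is handled by the fact that $M_\sigma$ already encodes characters of $G_\beta$ via $\beta^*$, so equivariant line bundles match the enlarged set of translates.

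Finally, I would check that these affine equivalences commute with restriction to faces. On the coherent side restriction is localization at $\chi$ with $\chi\in\tau^\perp$; on the constructible side it is restriction of costandards to a neighborhood of the smaller cone. I would then pass to the limit and restrict to compact objects to obtain $\Coh(\mathcal X_{\mathbf\Sigma})\simeq\Sh(T^n,\Lambda_{\mathbf\Sigma})$, with sheaves understood as the corresponding compact/finite objects.
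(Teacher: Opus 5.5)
The paper gives no argument for this statement; it is quoted from \cite{Kuwagaki-CCC}. Judged on its own terms, your local-to-global outline is a reasonable shape for a proof. However, both of its load-bearing steps are missing, and the mechanism you propose for the first one cannot work.

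\emph{Gluing on the A-side.} The functor mirror to restriction $\Coh(\cX_{\mathbf{\Sigma}})\to\Coh(\cX_\sigma)$ is the localization $\Sh(T^n,\Lambda_{\mathbf{\Sigma}})\to\Sh(T^n,\Lambda_\sigma)$ that is left adjoint to the full inclusion, i.e.\ removing the part of the stop outside $-\sigma$. Descent for the Kashiwara--Schapira stack is descent along restrictions to open subsets of $\Lambda_{\mathbf{\Sigma}}$, and these are different functors. Your cover is also degenerate. Each $\Lambda_\sigma$ contains the zero section, and a conic open subset of $\Lambda_{\mathbf{\Sigma}}$ containing the zero section is all of $\Lambda_{\mathbf{\Sigma}}$. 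For the fan of $\bP^1$, for instance, any conic neighbourhood of $(0,0)$ contains both hairs of $T^*_0S^1$. The remark that nearby cycles ``only see faces of $\sigma$'' does not fill this hole. What is actually needed is:
\begin{enumerate}
\item[(i)] A concrete model of the localization. Since $\Lambda_\sigma=\Lambda_{\mathbf{\Sigma}}\cap(T^n\times(-\sigma))$, on the universal cover $M_\bR$ it can be realized by convolution with a constant sheaf on a dual cone (microlocal cut-off lemma). You must then check that it preserves the $\Lambda_{\mathbf{\Sigma}}$ condition and is compatible with faces $\tau\subset\sigma$.
\item[(ii)] A \v{C}ech-type resolution of the identity of $\Sh(T^n,\Lambda_{\mathbf{\Sigma}})$ by these cone kernels, which is a sheaf-level Brion/Lawrence--Varchenko relation. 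For $\bP^1$ it reads $0\to\bC_{\{0\}}\to\bC_{[0,\infty)}\oplus\bC_{(-\infty,0]}\to\bC_{\bR}\to 0$. You also need joint conservativity of the localizations.
\end{enumerate}

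\emph{Affine case.} Full faithfulness on the objects $p_!\Theta(\sigma,\chi)$ is fine, once you know they are compact so that Hom commutes with the sum over $M$. Your essential-surjectivity argument, however, does not work.
\begin{itemize}
\item Objects built from strata $\chi+\tau^\perp$ (extensions of local systems on strata) are generally not in $\Sh(T^n,\Lambda_\sigma)$, because their microsupport contains conormals outside $-\sigma$. For $\sigma=\bR_{\ge0}$, both the skyscraper at $0\in S^1$ and $j_!\bC_{S^1\setminus 0}$ have the full fibre $T^*_0S^1$. So an induction on strata leaves the category.
\item The costandards live on open cones, not on strata. A finite filtration by them cannot exist for general objects of the large category, and for compact objects it is exactly what must be proved.
\end{itemize}
The input you need is that the $p_!\Theta(\sigma,\chi)$ jointly detect zero: if $\Gamma(\chi+\mathrm{Int}\,\sigma^\vee,p^*F)=0$ for all $\chi\in M_\sigma$, then $F=0$. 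Proving this requires a genuine propagation argument using $SS(F)\subset M_\bR\times(-\sigma)$. For smooth cones one can instead use a K\"unneth reduction to a circle with one hair, whose category is $\bC[x]$-modules. The same must be done for stacky cones, not merely matched on objects.

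Finally, ``compact/finite'' must mean compact. The object $p_!\Theta(\sigma,\chi)$ has infinite-rank stalks: for $\sigma=\bR_{\ge0}$, $p_!j_!\bC_{(0,\infty)}$ has generic stalk $\bigoplus_{n\ge 0}\bC$. Sheaves with finite-rank stalks correspond to coherent sheaves with proper support. That category is not $\Coh(\cX_{\mathbf{\Sigma}})$ for incomplete fans, which includes all the fans used in this paper.
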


\begin{remark}
    For our purposes here, we don't require the full strength of \cite{Kuwagaki-CCC}; e.g. the case of toric surfaces established earlier in \cite{kuwagaki-surfaces} would do. 
\end{remark}

We now consider attaching handles, following the ideas of \cite{gammage-le}.  The main tool on the B-side is the following result. 

\begin{theorem}{\cite[Theorem 4.13]{gammage-le}}
\label{thm:coh-of-blow-up}
    Let \(\cX\) be a smooth variety or Deligne--Mumford stack, \(\cD \stackrel{j}{\hookrightarrow} \cX\) a divisor, and \(\cH \stackrel{i}{\hookrightarrow} \cD \) a codimension \(2\) smooth substack of \(\cX\). Denote by \(\mathrm{Bl}_{\cH} \cX\) the blow up of \(\cX\) along \(\cH\), \(\widetilde{\cD}\) the strict transform of \(\cD\), and \(\cU = \mathrm{Bl}_{\cH} \cX \setminus \widetilde{\cD}\). Then there is a pushout diagram of dg-categories:
    \begin{equation}\label{eq:pushout-on-B-side}
    \begin{tikzcd}
        \Coh(\cD) \ar[r, " i^*"] \ar[d, "j_*"'] & \Coh(\cH) \ar[d]\\
        \Coh (\cX) \ar[r] &  \Coh(\cU)
        \arrow[from=1-1, to=2-2, phantom, "\ulcorner", very near end]
    \end{tikzcd}
    \end{equation}
\end{theorem}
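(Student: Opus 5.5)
The plan is to exhibit both sides of the square as computing the same category via semiorthogonal decompositions and gluing. Write $p:\mathrm{Bl}_{\cH}\cX \to \cX$, let $E\cong \cH$ be the exceptional divisor (since $\cH$ has codimension $2$ in $\cX$ and codimension $1$ in $\cD$, $E\to\cH$ is a $\bP^1$-bundle), and write $\widetilde{\cD}$ for the strict transform. Since $\cH\subset\cD$ is a Cartier divisor in $\cD$, the map $\widetilde{\cD}\to\cD$ is an isomorphism. Moreover $p^*\cD = \widetilde{\cD}+E$. Everything will be phrased for ind-coherent (or stable, idempotent complete) dg-categories, so that the pushout is computed as a colimit in presentable categories and then restricted to compact objects.

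The first step is to use localization sequences. We have $\Coh(\cU)\simeq \Coh(\mathrm{Bl}_{\cH}\cX)/\langle \Coh_{\widetilde{\cD}}\rangle$, the Verdier quotient by objects supported on $\widetilde{\cD}$. This quotient is generated by pushforwards from $\widetilde{\cD}\cong\cD$. By Orlov's blowup formula,
\[
\Coh(\mathrm{Bl}_{\cH}\cX)=\langle \iota_{E*}\pi_E^*\Coh(\cH)\otimes\cO(-1)\text{-twist},\ p^*\Coh(\cX)\rangle ,
\]
where $\iota_E:E\hookrightarrow \mathrm{Bl}_{\cH}\cX$ and $\pi_E:E\to\cH$.

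The second step is to compute the bottom-right corner of the square in this decomposition. The pushout of $\Coh(\cX)\leftarrow\Coh(\cD)\to\Coh(\cH)$ along $j_*$ and $i^*$ is the quotient of $\Coh(\cX)\sqcup\Coh(\cH)$, freely glued, by identifying $j_*F$ with $i^*F$. Concretely, it is the category generated by $p^*\Coh(\cX)$ and a copy of $\Coh(\cH)$, in which the image of $j_*F$ is killed and replaced by $i^*F$. The key geometric identity is
\[
p^* j_* F \;\simeq\; \mathrm{cone}\bigl(\widetilde{j}_*\widetilde F \to \iota_{E*}(\text{pullback of } i^*F)\bigr)
\]
up to a twist, which follows from the exact sequence $0\to\cO_{\widetilde{\cD}}(-E)\to\cO_{p^*\cD}\to\cO_E\to0$ restricted along $\widetilde{\cD}\cap E\cong\cH$. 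After quotienting by $\widetilde{\cD}$-supported objects, this says exactly that $p^*j_*F$ becomes identified with the $E$-component attached to $i^*F$. Thus the functor $\Coh(\cH)\to\Coh(\cU)$ is $F\mapsto \iota_{E*}\pi_E^*F|_{\cU}$, and $\Coh(\cX)\to\Coh(\cU)$ is $p^*$ followed by restriction; the square commutes by the identity just displayed.

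The third step is to show that the induced functor from the pushout to $\Coh(\cU)$ is an equivalence. For essential surjectivity, $\Coh(\cU)$ is generated by the images of $p^*\Coh(\cX)$ and $\iota_{E*}\pi_E^*\Coh(\cH)$ by Orlov's formula together with the localization sequence. For full faithfulness, I would compare Hom complexes. The quotient by $\widetilde{\cD}$-supported objects computes morphisms as colimits over twists by $\cO(n\widetilde{\cD})$. Using $\cO(\widetilde{\cD})=p^*\cO(\cD)(-E)$, these colimits are matched with the bar-type formula for Homs in a pushout of dg-categories, as in the analogous computation for the A-side gluing.

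I expect this third step, matching the Hom colimits, to be the main obstacle. To handle it I would first reduce to the local model $\cX=\bA^2\times\cH$, $\cD=\{x=0\}$, $\cH=\{x=y=0\}$, using descent of both sides in étale/Zariski covers of $\cX$ (the pushout of dg-categories commutes with these limits since all functors preserve compacts and have adjoints). In the local model I would compute directly with toric methods, where $\cU$ is the affine chart complement and everything is explicit.
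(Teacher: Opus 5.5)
The paper does not reprove this statement (it is quoted from \cite{gammage-le}), so I am judging your argument on its own. It has a genuine gap exactly where you anticipate one.

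Steps 1 and 2 are fine as far as they go. Localization, Orlov's formula, and the triangle
\[
\widetilde j_*\bigl(\widetilde F\otimes\cO(-E)\bigr)\longrightarrow p^*j_*F\longrightarrow \iota_{E*}\pi_E^*i^*F
\]
obtained from $0\to\cO_{\widetilde\cD}(-E)\to\cO_{p^{-1}\cD}\to\cO_E\to 0$ (this is the precise form of your ``key identity'') show that the square commutes and that the two images generate $\Coh(\cU)$. But this only gives a functor from the pushout to $\Coh(\cU)$. The whole content of the theorem is that this functor is fully faithful, and Step 3 gives no mechanism for that.

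Matching $\mathrm{colim}_n \Hom\bigl(-,-\otimes\cO(n\widetilde\cD)\bigr)$ with a bar-type formula for Homs in a dg pushout is asserted, not explained. There is also no such Hom computation on the A-side to imitate: there the pushout comes for free from the cosheaf property of $\mu sh$. The reduction to a local model only moves the difficulty. With $\cX=\bA^2$, $\cD=\{x=0\}$, $\cH=\{0\}$, the claim $\Coh(\bA^2)\sqcup_{\Coh(\bA^1)}\Coh(\mathrm{pt})\simeq\Coh(\cU)$ is the same theorem, and toric geometry does not by itself compute a pushout of dg-categories. Relatedly, your picture of the pushout as $\Coh(\cX)\sqcup\Coh(\cH)$ with $j_*F$ ``identified with'' $i^*F$ cannot be taken literally. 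In a disjoint union there are no morphisms $j_*F\to i^*F$ whose cones one could kill.

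The missing ingredient is the standard presentation of a pushout $A\sqcup_{C} B$ of small idempotent-complete stable categories along $f\colon C\to A$, $g\colon C\to B$. It is the Verdier quotient of the lax pushout by the objects $\mathrm{fib}\bigl(f(c)\to g(c)\bigr)$, where $f(c)\to g(c)$ is the tautological morphism. Here the lax pushout is the semiorthogonal gluing $\langle A,B\rangle$ with $\Hom(b,a)=0$ and $\Hom(a,b)=\Hom_{A}(a,f(-))\otimes_{C}\Hom_{B}(g(-),b)$. With this, your ingredients finish the proof with no Hom-colimits and no local models.

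Use the decomposition $\Coh(\mathrm{Bl}_{\cH}\cX)=\langle p^*\Coh(\cX),\,\iota_{E*}\pi_E^*\Coh(\cH)\rangle$, a mutation of Orlov's. It is semiorthogonal because $\omega_p=\cO(E)$ gives $\Hom(\iota_{E*}\pi_E^*h,p^*a)=\Hom\bigl(p_*\iota_{E*}(\pi_E^*h\otimes\cO_E(-1)),a\bigr)=0$. Its gluing bimodule is
\[
\Hom(p^*a,\iota_{E*}\pi_E^*h)\cong\Hom_{\cX}(a,j_*i_*h)\cong\Hom_{\cX}\bigl(a,j_*(-)\bigr)\otimes_{\Coh(\cD)}\Hom_{\cH}\bigl(i^*(-),h\bigr).
\]
This is exactly the bimodule of the lax pushout of $j_*$ and $i^*$, so $\Coh(\mathrm{Bl}_{\cH}\cX)$ is that lax pushout. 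By the triangle above, the object attached to $F\in\Coh(\cD)$ is $\widetilde j_*\bigl(\widetilde F\otimes\cO(-E)\bigr)$. Since $\widetilde\cD\cong\cD$, these objects generate the $\widetilde\cD$-supported objects. Localization then identifies the quotient, and hence the pushout, with $\Coh(\cU)$.
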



    

Fix now a collection of vectors \( \{k_i v_i \} \), where \(v_i\) is a primitive vector in \(N=\bZ^2\) and \(k_i\) is a positive integer.  We define a stacky fan
$\mathbf{\Sigma}$ by taking  \(L = \bigoplus_i \bZ e_i\) and \( \beta: L \rightarrow N\) sending \(e_i\) to \(k_i v_i\), and $\widehat{\Sigma}$ to be the fan formed by the rays $e_i$.

Let \(\cX_{\mathbf{\Sigma}}\) be the corresponding stacky toric surface. Its toric boundary divisor  $\partial \cX_{\mathbf{\Sigma}} = \coprod \cD_i $ consists of disjoint components, where \(\cD_i\)  corresponds to the cone \(k_i v_i\) and carries a canonical isomorphism
$
\cD_i \cong \bC^* \times B \mu_{k_i}.
$
Here \( \mu_{k_i}\) is the cyclic group \(\bZ / k_i \bZ\). 
We write $\mathbf{-1} := \coprod \{-1\} \times B \mu_{k_i}$, and 
$$\mathbf{B}_{\mathbf{\Sigma}}:= \mathrm{Bl}_{\mathbf{-1}}\cX \setminus \coprod \widetilde \cD_i,$$
where $\widetilde{\cD}_i$ denotes the strict transform under the blowup. 

On the A-side, take \(\Lambda_{\mathbf{\Sigma}}\) to the  corresponding FLTZ skeleton in \(T^*T^2\). Unwinding the construction in \zcref{def:fltz-skeleton}, the infinity part 
$\Lambda_\mathbf{\Sigma}^\infty := \Lambda_\mathbf{\Sigma} \cap S^*T^2$
is a disjoint union of \(S^1\).  The vector \(k_i v_i\) corresponds to \(k_i\)-many parallel copies of \( v_i^\perp \times (-v_i)\) (viewed as a conical subset of \(M_\bR/M \times N_\bR \cong T^*T^2\)). We illustrate in \zcref{fig: circle at infinity}.
(Note that by \cite{GKS}, translating these \(k_i\) copies along the perpendicular direction does not change the category \(\Sh(T^2, \Lambda_{\mathbf{\Sigma}})\).) 

We write \(\mathbf{A}_\Sigma\) for the Weinstein manifold obtained by attaching \(2\)-handles to \(T^*T^2\) along \(\Lambda_{\mathbf{\Sigma}}^\infty\):
\[
\mathbf{A}_{\mathbf{\Sigma}} \coloneqq T^*T^2 \cup_{\Lambda_{\mathbf{\Sigma}}^\infty } (2\mathrm{-handles}).
\]

\begin{figure}
    \centering
    \begin{tikzpicture}[scale = .8]
    \foreach \x in {-1,...,2}
            \foreach \y in {-1,...,1}
             \node[circle, fill=gray!80, inner sep=0pt, minimum size=3pt] at (\x,\y) {};
    \node[circle, fill=Blue, inner sep=0pt, minimum size=3pt] at (0, 0) {};
    \draw[-Stealth, Blue, line width = 1pt] (0, 0) -- (2, 0);
\end{tikzpicture}
\qquad \qquad
\begin{tikzpicture}[scale = .6]
    \draw[orange, line width = 1pt] (0, 0) -- (4, 0) -- (4, 4) -- (0, 4) -- cycle;
    \draw[Blue, line width = 1pt] (1, 0) -- (1, 4);
    \draw[Blue, line width = 1pt] (3, 0) -- (3, 4); 
    \draw[Blue] (1, 1) -- (.8, 1); 
    \draw[Blue] (1, 3) -- (.8, 3);
    \draw[Blue] (1, 2) -- (.8, 2); 
    \draw[Blue] (3, 2) -- (2.8, 2);
    \draw[Blue] (3, 1) -- (2.8, 1);
    \draw[Blue] (3, 3) -- (2.8, 3); 
\end{tikzpicture}
    \caption{A 1d cone of a stacky fan $\mathbf \Sigma$, and the corresponding component of $\Lambda_{\mathbf \Sigma}^\infty$.}
    \label{fig: circle at infinity}
\end{figure}

\begin{theorem}{\cite[Theorem 4.12]{gammage-le}}
    \label{thm:HMS-for-U-M}
$    \Coh (\mathbf{B}_{\mathbf{\Sigma}}) \cong \mathrm{Fuk}(\mathbf{A}_{\mathbf{\Sigma}})$.
\end{theorem}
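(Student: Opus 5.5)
The plan is to prove both sides of the equivalence as pushouts of the same shape and then match the pieces. The B-side is an iterated application of \zcref{thm:coh-of-blow-up}. The A-side comes from the gluing formalism of microsheaves on skeleta together with \cite{GPS3}. Concretely, we apply \zcref{thm:coh-of-blow-up} once for each component $\cD_i$ of $\partial\cX_{\mathbf{\Sigma}}$, taking $\cH_i = \{-1\}\times B\mu_{k_i}$. Because the $\cD_i$ are disjoint, the blowups at the distinct $\cH_i$ do not interact, and the pushouts assemble into a single one:
\[
\Coh(\mathbf{B}_{\mathbf{\Sigma}}) \simeq \Coh(\cX_{\mathbf{\Sigma}}) \sqcup_{\bigoplus_i \Coh(\cD_i)} \bigoplus_i \Coh(\cH_i).
\]
Here $\Coh(\cD_i)\simeq \Coh(\bC^*\times B\mu_{k_i})$ is $k_i$ copies of $\Coh(\bC^*)$, and $\Coh(\cH_i)$ is $k_i$ copies of $\mathrm{Perf}(\bC)$. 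The map between them is restriction to the point $-1$ in each copy.

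On the A-side we use \cite{GPS3} to identify $\mathrm{Fuk}(\mathbf{A}_{\mathbf{\Sigma}})$ with compact objects of microsheaves on the skeleton $\Lambda_{\mathbf{\Sigma}} \cup (\text{cores of the 2-handles})$. Microsheaves satisfy descent, so this category is a pushout of three pieces:
\begin{itemize}
\item $\Sh(T^2,\Lambda_{\mathbf{\Sigma}})$, which is $\Coh(\cX_{\mathbf{\Sigma}})$ by \zcref{thm:ccc};
\item microsheaves along a collar of the attaching Legendrians $\Lambda^\infty_{\mathbf{\Sigma}}$, a disjoint union of $\sum_i k_i$ circles, giving $\bigoplus \Loc(S^1)\simeq \bigoplus \Coh(\bC^*)$;
\item microsheaves on the disks filling these circles, giving $\bigoplus \mathrm{Perf}(\bC)$.
\end{itemize}
The disk piece maps to the collar piece by restriction of a local system from $D^2$ to $S^1$. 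Equivalently, this is the inclusion of local systems with trivial monodromy, i.e.\ the stalk at $1\in\bC^*$. Phrased with left adjoints, which is the form \cite{GPS3} wants for wrapped categories, it is the Viterbo/stop-removal sequence for attaching a handle.

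The main obstacle is matching the two functors $\Sh(T^2,\Lambda_{\mathbf{\Sigma}}) \to \Loc(\Lambda^\infty_{\mathbf{\Sigma}})$ and $j^*$ (or the adjoint $j_*$), together with the base points. The microlocal restriction to the $k_i$ parallel circles at infinity should correspond, under the coherent-constructible correspondence, to restriction to the boundary divisor $\cD_i\cong \bC^*\times B\mu_{k_i}$. The $k_i$ circles should match the $k_i$ characters of $\mu_{k_i}$. The approach is to check this on generators, namely line bundles and their images, the FLTZ costandard sheaves, as in \cite{gammage-le}. That argument treats the non-stacky case $k_i=1$, so the only new input here is the $B\mu_{k_i}$ factor. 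We would handle it by passing to the cover $X_{\widehat{\Sigma}}$ and tracking $G_\beta$-equivariance; there, restriction to $\cD_i$ splits by characters in the same way the microlocal stalks split over the parallel translates $\chi\in M_\sigma$.

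There is also a sign in the base point. The microlocal Maslov/spin twist means that the handle imposes monodromy $-1$ rather than $+1$ on the microlocal rank-one local system, which is why the blowup centre is $\{-1\}$. We would fix this by direct comparison with the case of a single handle on $T^*T^2$, where both sides can be computed by hand. Once the functors agree, the two pushouts agree, giving $\Coh(\mathbf{B}_{\mathbf{\Sigma}})\simeq\mathrm{Fuk}(\mathbf{A}_{\mathbf{\Sigma}})$. To compare compact objects on each side, we use that Ind-completion commutes with these colimits.
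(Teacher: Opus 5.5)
Your proposal follows the paper's proof: both sides are written as the pushouts \eqref{eq:pushout-on-B-side} and \eqref{eq:pushout-on-A-side}, the latter via \cite{GPS3} and descent for $\mu sh$, and then matched piece by piece, with each handle forcing monodromy $-1$. The paper handles your two ``obstacles'' more directly. It quotes the match of $\mu^l$ with $j_*$ as a special case of \cite[Thm.~7.4.1]{gammage-shende} instead of checking generators. It derives the $-1$ from the fact that the identification $\mu sh(\Lambda^\infty)\cong\Loc(\Lambda^\infty)$ compatible with $\Loc(T^2)\cong\Coh((\bC^*)^2)$ requires the non-bounding spin structure on each circle, whereas the disk induces the bounding one; note that this sign, not the $B\mu_{k_i}$ factor, is what the paper singles out as needing care beyond \cite{gammage-le} (once rays are linearly dependent the torus can no longer move the blowup points anywhere), and a single-handle check detects it only if you track the functors rather than just the categories.
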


\begin{proof}
The authors of \cite{gammage-le} officially restrict attention to fans consisting of 1d cones which are linearly independent.  The proof here is essentially the same as in \cite{gammage-le}, though it is now important to take care and blow up at the particular points $\mathbf{-1}$.  We repeat the proof from \cite{gammage-le} to clarify this point. 
For visual simplicity,
we now drop the subscript \(\mathbf{\Sigma}\) from the notation.

By construction, the skeleton of $\mathbf{A}$ is  $\Lambda \cup_{\Lambda^\infty} (  \coprod D^2 )$. So 
\cite{GPS3} we have $\mathrm{Fuk}(\mathbf{A}) = \mu sh( \Lambda \cup_{\Lambda^\infty} (  \coprod D^2 ))$.  (We refer to \cite{nadler-shende} for the development of $\mu sh$.)  As $\mu sh$ is a cosheaf/sheaf of categories, we have
    \begin{equation}
    \label{eq:pushout-on-A-side}
    \begin{tikzcd}
        \mu sh (\Lambda^\infty) 
        \ar[r, " \iota_*"] \ar[d, "\mu^l"'] 
        &  \mu sh\left(\coprod D^2 \right)\ar[d] 
        \\
        \mu sh(\Lambda) \cong \Sh_\Lambda (T^2) \ar[r]
        &  \mu sh \left( \Lambda \cup_{\Lambda^\infty} (  \coprod D^2 ) \right)
        \arrow[from=1-1, to=2-2, phantom, "\ulcorner", very near end]
    \end{tikzcd}
    \end{equation}
Here, the indicated maps are the left  adjoints to the usual restriction maps of microsheaves.

    We compare the pushout diagrams~\eqref{eq:pushout-on-B-side} and~\eqref{eq:pushout-on-A-side}. As a very special case of \cite[Thm. 7.4.1]{gammage-shende}, the functor \(\mu^l\) is mirror to \(j_*\). 

    A choice of secondary Maslov data on $\Lambda^\infty$ -- which, for $\Z$-coefficients as here, is a grading and a spin structure  \cite{guillermou}\footnote{The translation to the present language can be extracted e.g. from the discussion below \cite[Thm. 4.27]{CKNS-perverse} along with \cite[Lemma A.5]{CKNS-perverse}.} -- allows us to identify
    $\mu sh (\Lambda^\infty) \cong \Loc ( \Lambda^\infty)$.
    If we wish to do this compatibly with the standard identification of $\Loc(T^2)$ with $\Coh((\C^*)^2)$, then we must choose the {\em non-bounding} spin structure on each component of $\Lambda^\infty$.  
    
    Thus, the restriction  $\iota^*$ adjoint to  \(\iota_*\) 
    \[
    \iota^*: \Loc( \coprod D^2 ) \cong \mu sh(\coprod D^2)   \longrightarrow  \mu sh (\Lambda^\infty) \cong \Loc ( \coprod S^1)
    \]
    is not the naive restriction of local systems; instead, it carries the rank one trivial local system on \(D^2\) to the rank one local system on \(S^1\) with monodromy \(-1\). 
    This completes the proof. 
\end{proof}

We give also a non-stacky mirror.  

\begin{lemma}
    $\mathbf{B}_{\mathbf{\Sigma}}$ is non-stacky away from a collection of points, one for each ray $k_i v_i$ of the fan.  The corresponding point is analytically locally $\C^2/\mu_{k_i}$, where \(\mu_{k_i}\) acts as \( \zeta \cdot (x, y) = ( \zeta x, \zeta^{-1} y) \). 
\end{lemma}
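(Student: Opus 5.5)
The plan is to compute everything in explicit local charts: cover \(\cX = \cX_{\mathbf{\Sigma}}\) by the open substacks attached to the cones of \(\widehat{\Sigma}\), identify each one with a simple quotient stack, and then do the blowup by hand in coordinates. Since \(\widehat{\Sigma}\) has only the zero cone and the rays \(e_i\), the toric variety \(X_{\widehat{\Sigma}}\) is covered by the open torus \(T_L\) and the charts \(U_i = \bC_{x_i} \times (\bC^*)^{r-1}\), where \(x_i\) is the coordinate dual to \(e_i\). The torus part contributes \([T_L/G_\beta] \cong T_N\), which is an honest variety. So everything happens near the divisors \(\cD_i\), and the centers of the blowup lie in pairwise disjoint open sets \([U_i/G_\beta]\).

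Fix \(i\) and choose a basis of \(N\) with \(v_i = (1,0)\). The first step is to show
\[
[U_i/G_\beta] \cong [\bC/\mu_{k_i}] \times \bC^*,
\]
where \(\mu_{k_i}\) acts on \(\bC\) by \(\zeta \cdot x = \zeta x\). I would prove this by comparing with the stacky fan that has the single ray \(e_i \mapsto k_i v_i\), completed by an auxiliary generator \(e' \mapsto (0,1)\) spanning no cone. For that fan the stack is visibly \([\bC/\mu_{k_i}] \times \bC^*\). The extra generators \(e_j\) with \(j \neq i\) only contribute free \(\bC^*\)-factors in \(U_i\), and the corresponding part of \(G_\beta\) acts freely and transitively on them, so these factors can be quotiented away. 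This identification is also consistent with \(\cD_i \cong \bC^* \times B\mu_{k_i}\) recorded above, and it identifies \(\{-1\} \times B\mu_{k_i}\) with \([\{(0,-1)\}/\mu_{k_i}]\).

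The second step is the blowup. Blowing up commutes with the flat cover \(\bC \times \bC^* \to [\bC/\mu_{k_i}] \times \bC^*\), so the blown-up stack is \([\mathrm{Bl}_{(0,-1)}(\bC \times \bC^*)/\mu_{k_i}]\). Put \(y = t+1\), where \(t\) is the coordinate on \(\bC^*\); then \(\mu_{k_i}\) acts with weight \(1\) on \(x\) and weight \(0\) on \(y\). The blowup has two charts.
\begin{itemize}
\item In the chart \(x = uy\), the strict transform \(\widetilde{\cD}_i\) is \(\{u = 0\}\), so after deleting it nothing new is left beyond what the other chart covers.
\item In the chart \(y = xu\), the strict transform does not meet the chart, the exceptional divisor is \(\{x = 0\}\), and \(\mu_{k_i}\) acts by \(\zeta \cdot (x,u) = (\zeta x, \zeta^{-1} u)\).
\end{itemize}

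The third step is the stabilizer computation in the second chart. An element \(\zeta \neq 1\) fixes \((x,u)\) if and only if \(\zeta x = x\) and \(\zeta^{-1} u = u\), that is, if and only if \(x = u = 0\), because the weights \(\pm 1\) are primitive. So \(\mathbf{B}_{\mathbf{\Sigma}}\) has trivial stabilizers away from one point on each exceptional curve, and near that point it is \([\bC^2/\mu_{k_i}]\) with exactly the stated action. Its coarse space is the \(A_{k_i-1}\) singularity \(\bC^2/\mu_{k_i}\). Away from this point the action is free, so the stack is an algebraic space there, and in fact the quotient of a smooth variety by a free finite group action, hence a variety.

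The step I expect to need the most care is the first one, the local identification of \([U_i/G_\beta]\). The rays \(k_j v_j\) may be linearly dependent, so \(G_\beta\) can be large and can have a continuous part. One has to check that the part of \(G_\beta\) acting on \((\bC^*)^{r-1}\) acts freely, so that quotienting by it leaves exactly the residual \(\mu_{k_i}\). My approach is to write down the exact sequence \(0 \to G_\beta \to T_L \to T_N\) and restrict it to the stabilizer of the divisor \(\{x_i = 0\}\). That stabilizer is the subgroup of \(G_\beta\) acting trivially on the \(e_j\)-coordinates for \(j \neq i\), and it is identified with \(\ker(\bC^* \xrightarrow{\,z \mapsto z^{k_i}\,} \bC^*) = \mu_{k_i}\), since \(v_i\) is primitive.
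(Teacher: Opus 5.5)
Your proposal is correct and follows the paper's proof. The paper also works in the local model $[\bC^2_{z,t}/\mu_{k_i}]$ with $\mu_{k_i}$ acting only on the normal coordinate, blows up in the same two charts, puts the strict transform $\widetilde{\cD}_i$ in the chart $z=tw$, and reads off the residual action $\zeta\cdot(z,s)=(\zeta z,\zeta^{-1}s)$ at the single surviving stacky point.

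The one addition is your Step 1, which the paper simply asserts, and only the local structure near $\fp_i$ is actually needed there. Also, $G_\beta$ has dimension $r-2$ and so cannot act transitively on $(\bC^*)^{r-1}$, so your phrase ``quotient away the extra factors'' needs one more sentence. To make it precise, rewrite $[U_i/G_\beta]\cong[(U_i\times T_N)/T_L]$ and slice off the factor $\prod_{j\neq i}\bC^*$, on which the corresponding subtorus of $T_L$ does act freely and transitively. This leaves $\bC^*$ acting on $\bC\times T_N$ by $\lambda\cdot(x,n)=(\lambda x,\lambda^{k_iv_i}n)$, hence $[\bC/\mu_{k_i}]\times\bC^*$ as you claimed.
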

\begin{proof}
    Recall that  \(\mathbf{B}_{\mathbf{\Sigma}}:= \mathrm{Bl}_{\mathbf{-1}}\cX \setminus \coprod \widetilde \cD_i\). 
    Denote by \(\fp_i\) the stacky point on \(\cD_i\) with coordinate \(-1\). Then \(\mathbf{-1} := \coprod_i \fp_i \). Note that \(\cX \backslash \coprod \cD_i \cong (\bC^*)^2\), so we only need to look at local coordinates around each \(\fp_i\) and understand what happens when we perform the blowup.

    Take a neighborhood of \( \fp_i\), isomorphic to 
    \[ [\bC^2_{z, t} / \mu_{k_i}] \]
    where \(\mu_{k_i}\) only acts on the first coordinate:
    \(
    \zeta \cdot (z, t) = (\zeta z, t)
    \),
    such that 
    \(
    \cD_i= \{ z = 0 \}\) and \( \fp_i = (0, 0)
    \).
    After blowup there are two local charts
    \[
     \{ (z, s) \mid t = zs\}, \qquad\{ (t, w) \mid z = tw \}, 
    \]
    and \(\widetilde{\cD_i} = \{ w = 0\} \) sits in the second chart. 
    The group \(\mu_{k_i}\) acts as 
    \[
    \zeta \cdot (z, s) = (\zeta z, \zeta^{-1} s) \quad \text{and} \quad \zeta \cdot (t, w) = (t, \zeta w).  
    \]
    After removing the divisor \( \widetilde{\cD_i} \), there only remains one stacky point \( (z, s) = (0, 0)\), which has the prescribed group action.  
\end{proof}

In particular, $\mathbf{B}_{\mathbf{\Sigma}}$ is a noncommutative resolution of its coarse moduli space 
$|\mathbf{B}_{\mathbf{\Sigma}}|$, which has only $A_n$ singularities. We will use the following description of  the minimal resolution of $|\mathbf{B}_{\mathbf{\Sigma}}|$. 

\begin{construction}\label{construction:minimal_resolution}
    Let \( \cX_{\mathbf{\Sigma}}\) be a stacky surface given by a collection of vectors \( \{ k_i v_i\}_i\) as before. Its coarse moduli space \( | \cX_{\mathbf \Sigma} |\) is a toric variety given by the vectors \( \{v_i\}_i \). Denote the corresponding toric boundary divisors by \(D_i\).

    We define the smooth surface \(\cB_{\Sigma}\) to be the variety  obtained from the coarse moduli space $|\mathcal{X}_\mathbf{\Sigma}|$ 
    by, for each $i$, blowing up $-1 \in D_i$, then iteratively blowing up the point where the exceptional divisor meets the strict transform of $D_i$, for a total of $k_i$ blowups, and then finally deleting the strict transform of $D_i$.
\end{construction}

\begin{lemma}\label{lem:BSigma_resolution}
    \(\cB_{\mathbf \Sigma}\) is isomorphic to the minimal resolution of \(|\mathbf B_{ \mathbf \Sigma}|\). 
\end{lemma}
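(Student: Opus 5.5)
The problem is local. Both $\cB_{\mathbf\Sigma}$ and $|\mathbf B_{\mathbf\Sigma}|$ contain the open torus $(\bC^*)^2$. Each also contains, for each $i$, a neighborhood of the image of the point $-1\in D_i$, and these neighborhoods are disjoint. So I will construct the isomorphism chart by chart and glue along $(\bC^*)^2$, together with the parts of the boundary strata that get deleted in both constructions.

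Concretely, fix $i$ and work near $\fp_i$ with the coordinates $(z,t)$ from the proof of the preceding lemma. The coarse space of $[\bC^2_{z,t}/\mu_{k_i}]$ (with $\mu_{k_i}$ acting on $z$) is $\bC^2_{u,t}$ with $u=z^{k_i}$. Here $D_i=\{u=0\}$ and the point $-1$ is $(0,0)$ after a translation in $t$. By the preceding lemma, $|\mathbf B_{\mathbf\Sigma}|$ near this point is the quotient of the chart $\{(z,s)\}$, where $t=zs$, by $\zeta\cdot(z,s)=(\zeta z,\zeta^{-1}s)$. Its invariant ring is generated by $u=z^{k_i}$, $v=s^{k_i}$ and $t=zs$, subject to $uv=t^{k_i}$. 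Over the $(u,t)$-plane, the map $(u,v,t)\mapsto(u,t)$ identifies the coarse space with the affine surface $\{uv=t^{k_i}\}\subset\bC^3$. This surface is the complement of the strict transform of $\{u=0\}$ in a suitable modification of $\bC^2_{u,t}$. Globally, $|\mathbf B_{\mathbf\Sigma}|$ is the union of $(\bC^*)^2$ with these $A_{k_i-1}$ charts, glued over the torus via $(u,t)$ with $u,t\neq 0$.

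Next I compute $\cB_{\mathbf\Sigma}$ in the same chart. Blowing up $(0,0)$ in $\bC^2_{u,t}$ and keeping only the chart away from the strict transform of $\{u=0\}$ gives coordinates $(t,u_1)$ with $u=tu_1$. The exceptional divisor is $\{t=0\}$, and the strict transform of $D_i$ meets it at $u_1=0$. Blowing up $(t,u_1)=(0,0)$ again, and iterating $k_i$ times in total, each time at the point where the newest exceptional curve meets the strict transform of $D_i$, produces a chain of $k_i$ exceptional curves. After deleting the strict transform of $D_i$, the first $k_i-1$ of them are $(-2)$-curves forming an $A_{k_i-1}$ chain. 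The last one is a $(-1)$-curve, but it meets the deleted $\widetilde D_i$, so only an affine line of it survives. I then exhibit a proper birational morphism from this open surface to $\{uv=t^{k_i}\}$, given by $u$, $t$ and $v:=t^{k_i}/u$. I need to check that $v$ is regular on the complement of $\widetilde D_i$. The natural way is by divisor bookkeeping: the divisor of $u$ is $\widetilde D_i$ plus $\sum_j j E_j$ and that of $t$ is $\sum_j E_j$, with multiplicities computed along the chain; equivalently, one works in the toric charts. The morphism contracts exactly the $(-2)$-chain to the singular point and is an isomorphism elsewhere. Since the exceptional curves are $(-2)$-curves, no $(-1)$-curve is contracted, so this is the minimal resolution of the $A_{k_i-1}$ point.

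Finally, the local morphisms agree with the identity on $(\bC^*)^2$ and on the remaining pieces of the open boundary strata, so they glue to a global proper birational morphism $\cB_{\mathbf\Sigma}\to|\mathbf B_{\mathbf\Sigma}|$. This morphism is an isomorphism away from the singular points, and near each singular point it is the minimal resolution. Minimality is local, so the lemma follows.

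The main obstacle is the second step. The iterated blowup is not toric in the coordinates $(u,t)$ unless one uses the right chart, and the multiplicities have to come out exactly right so that $t^{k_i}/u$ is regular after deleting $\widetilde D_i$. I would first try to do this step in toric coordinates centered at $-1$, where every blowup is a star subdivision adding the rays $(1,1),(1,2),\dots,(1,k_i)$. The complement of the strict transform of $\{u=0\}$ is then visibly the toric $A_{k_i-1}$ resolution, with the rays $(1,0),\dots,(1,k_i)$ minus the ray of $D_i$.
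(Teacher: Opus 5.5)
Your proposal is correct and is essentially the paper's proof. Both arguments reduce to the local toric model at $-1\in D_i$, where the $k_i$ iterated blowups are star subdivisions adding the rays $(1,1),\dots,(k_i,1)$ (your $(1,j)$ after swapping coordinates), so that deleting $\widetilde D_i$ leaves exactly the fan of the minimal resolution of the $A_{k_i-1}$ cone; the paper reads that cone off the stacky fan rather than from the invariant ring $\bC[z,s]^{\mu_{k_i}}=\bC[u,v,t]/(uv-t^{k_i})$. One slip, which does not affect the argument: the chart $(t,u_1)$ with $u=tu_1$ is the one \emph{containing} the strict transform $\{u_1=0\}$ of $D_i$, not the one away from it (it is the right chart for the next blowup), and $\mathrm{div}(t)$ also contains the strict transform of $\{t=0\}$.
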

\begin{proof}
    We only need to show that they are isomorphic locally around each blowup. 
    This can be done by a local coordinate computation. Here we instead do it in a different way, using toric geometry.  
    
    First, let us consider the minimal resolution of \( |\mathbf{B}_\mathbf{\Sigma}|\). Again we choose a neighborhood of \( -1 \in {\cD_i}\), isomorphic to 
    \( [\bC/ (\mu_{k_i}) \times \bC] \). This local chart is the same as the toric stack given by the stacky cone generated by \( e_0 = (0, 1)\) and \(e_1 = (k_i, 0) \), and the toric divisor corresponding to \( e_1\) is \(\cD_i\).  
    
    Blowing up at the origin is the same as adding a vector \(e_0 + e_1\):
    
    \begin{tikzpicture}[line width = 1pt, Blue]
    \foreach \x in {0,...,3}
        \foreach \y in {0,...,1}
            \node[circle, fill=gray!80, inner sep=0pt, minimum size=3pt] at (\x,\y) {};
            
            \draw[-Stealth] (0, 0) -- (0, 1);
            \draw[-Stealth] (0, 0) -- (3, 0);
            \draw[-Stealth] (0, 0) -- (3, 1); 

            \node[right] at (3, 0) { \( e_1 \)}; 
            \node[left] at (0, 1) { \(e_0\) }; 
            \node[right] at (3, 1) { \(e_0 + e_1\) };            
    \end{tikzpicture}
    
    Then deleting \( \widetilde{\cD_i}\) is equivalent to deleting the vector \(e_1\). Taking the coarse moduli means we view the cone generated by \(e_0\) and \(e_0 + e_1\) as a normal toric fan, instead of a stacky one. Finally, taking the minimal resolution is just adding vectors \( ( 1, 1), (2, 1), \dots,  (k_i-1, 1)\):
    
    \begin{tikzpicture}[line width = 1pt, Blue]
    \foreach \x in {0,...,3}
        \foreach \y in {0,...,1}
            \node[circle, fill=gray!80, inner sep=0pt, minimum size=3pt] at (\x,\y) {};
            
            \draw[-Stealth] (0, 0) -- (0, 1);
            \draw[-Stealth] (0, 0) -- (3, 1); 
            \draw[-Stealth] (0, 0) -- (1, 1); 
            \draw[-Stealth] (0, 0) -- (2, 1);

            \node[left] at (0, 1) { \(e_0\) }; 
            \node[right] at (3, 1) { \(e_0 + e_1\) };            
    \end{tikzpicture}.

    Now let us consider \(\mathcal{B}_\mathbf{\Sigma}\). Take a neighborhood of \(-1 \in D_i\), which is isomorphic to \(\bC^2\). This local chart is isomorphic to the toric variety given by the cone generated by \(e_0 = (0, 1)\) and \(e'_1 = (1, 0)\), and the toric divisor corresponding to \(e'_1\) is \(D_i\). 
    
    \begin{tikzpicture}[line width = 1pt, Blue]
    \foreach \x in {0,...,2}
        \foreach \y in {0,...,1}
            \node[circle, fill=gray!80, inner sep=0pt, minimum size=3pt] at (\x,\y) {};
            
            \draw[-Stealth] (0, 0) -- (0, 1);
            \draw[-Stealth] (0, 0) -- (1, 0); 

            \node[left] at (0, 1) { \(e_0\) }; 
            \node[right] at (1, 0) { \(e'_1\) };            
    \end{tikzpicture}

    Doing the prescribed \(k_i\) blowups is equivalent to adding rays generated by \( (1, 1), (2, 1), \dots , (k_i, 1) \), 
    
    \begin{tikzpicture}[line width = 1pt, Blue]
    \foreach \x in {0,...,3}
        \foreach \y in {0,...,1}
            \node[circle, fill=gray!80, inner sep=0pt, minimum size=3pt] at (\x,\y) {};
            
            \draw[-Stealth] (0, 0) -- (0, 1);
            \draw[-Stealth] (0, 0) -- (3, 1); 
            \draw[-Stealth] (0, 0) -- (1, 1); 
            \draw[-Stealth] (0, 0) -- (2, 1);
            \draw[-Stealth] (0, 0) -- (1, 0);
 
            \node[left] at (0, 1) { \(e_0\) }; 
            \node[right] at (3, 1) { \(e_0 + k_i e'_1\) };   
            \node[right] at (1, 0) { \(e_1'\)};
    \end{tikzpicture}

    and deleting \(\widetilde{D_i}\) is the same as deleting the vector \(e_1'\). 
    Then we end up with the same toric fan. 
    Thus, \(\mathcal{B}_{\mathbf{\Sigma} } \) is isomorphic to the minimal resolution of \( |\mathbf{B}_\mathbf{\Sigma}| \). 
\end{proof}

\begin{corollary} \label{cor: nonstacky mirror}
    $\Coh (\mathcal{B}_{\mathbf{\Sigma}}) \cong \mathrm{Fuk}(\mathbf{A}_{\mathbf{\Sigma}})$.
\end{corollary}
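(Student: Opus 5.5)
The plan is to compose \zcref{thm:HMS-for-U-M} with a derived McKay-type equivalence relating the stack $\mathbf{B}_{\mathbf{\Sigma}}$ to the minimal resolution of its coarse space, and then to invoke \zcref{lem:BSigma_resolution}. Concretely, \zcref{thm:HMS-for-U-M} already gives $\Coh(\mathbf{B}_{\mathbf{\Sigma}}) \cong \mathrm{Fuk}(\mathbf{A}_{\mathbf{\Sigma}})$. It therefore suffices to produce an equivalence
\[
\Coh(\mathbf{B}_{\mathbf{\Sigma}}) \cong \Coh(\widetilde{|\mathbf{B}_{\mathbf{\Sigma}}|}),
\]
where the tilde denotes the minimal resolution. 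By \zcref{lem:BSigma_resolution}, the right-hand side is $\Coh(\mathcal{B}_{\mathbf{\Sigma}})$.

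For the middle equivalence I would use the description from the preceding lemma. The stack $\mathbf{B}_{\mathbf{\Sigma}}$ is a scheme away from finitely many points, and near each of them it is analytically (in fact étale-locally, via the explicit toric chart $[\C^2/\mu_{k_i}]$ found in the proof) the quotient stack $[\C^2/\mu_{k_i}]$ with the $SL_2$ action $(\zeta x, \zeta^{-1}y)$. For such quotients Kapranov--Vasserot \cite{Kapranov-Vasserot} (and more generally Bridgeland--King--Reid) give an equivalence $\Coh([\C^2/\mu_k]) \cong \Coh(\widetilde{\C^2/\mu_k})$. This equivalence is a Fourier--Mukai transform with kernel the structure sheaf of the universal family over the $\mu_k$-Hilbert scheme, which is the minimal resolution.

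To globalize, I would observe that the BKR construction is intrinsic to the global stack. Since $\mathbf{B}_{\mathbf{\Sigma}}$ is a smooth DM surface stack with trivial generic stabilizers and Gorenstein (symplectic) stabilizer actions, the irreducible component of the Hilbert scheme of points of $\mathbf{B}_{\mathbf{\Sigma}}$ (equivalently, the relevant moduli of "clusters") containing the generic points is the minimal (crepant) resolution $Y$ of $|\mathbf{B}_{\mathbf{\Sigma}}|$. The universal family then induces a Fourier--Mukai functor $\Coh(Y) \to \Coh(\mathbf{B}_{\mathbf{\Sigma}})$. Whether this functor is an equivalence is local on the coarse space. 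Away from the stacky points it is the identity, and near each stacky point it is the Kapranov--Vasserot equivalence for $\C^2/\mu_{k_i}$. A gluing argument, using that both categories are sheaves of categories over $|\mathbf{B}_{\mathbf{\Sigma}}|$ in the Zariski or étale topology, then gives the global equivalence.

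I expect the main obstacle to be this localization step: checking that the globally defined kernel restricts, near each stacky point, to the standard McKay kernel, and that full faithfulness and essential surjectivity can be tested étale-locally on the coarse space. Two points need care here. First, the identification of the stack with $[\C^2/\mu_k]$ is only local, so the kernel must be defined globally rather than patched from local choices. Second, the BKR criterion, $\dim(Y \times_{|B|} Y) \le 3$ together with crepancy, should be verified directly in the global setting; since it only concerns fibres over points of the coarse space, it reduces immediately to the local model. An alternative that avoids the Hilbert scheme is to note that, in our toric local charts, both sides are toric and the equivalence is the toric McKay correspondence. The gluing would then go along the pushout description of \zcref{thm:coh-of-blow-up}, replacing the stacky boundary piece $\Coh(\{-1\}\times B\mu_{k})$ by the corresponding exceptional configuration.
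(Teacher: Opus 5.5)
Your proposal is correct and takes the same route as the paper. The paper's proof is the one-line composite of \zcref{thm:HMS-for-U-M}, the derived McKay equivalence of \cite{Kapranov-Vasserot} giving $\Coh(\mathbf{B}_{\mathbf{\Sigma}}) \cong \Coh(\mathcal{B}_{\mathbf{\Sigma}})$, and the identification of $\mathcal{B}_{\mathbf{\Sigma}}$ with the minimal resolution of $|\mathbf{B}_{\mathbf{\Sigma}}|$ from \zcref{lem:BSigma_resolution}. Your globalization, via a globally defined cluster-Hilbert-scheme kernel with the equivalence checked locally on the coarse space, just makes explicit a local-to-global step that the paper leaves implicit when it cites the local result.
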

\begin{proof}
    By \cite{Kapranov-Vasserot},    $\Coh(\mathbf{B}_{\mathbf{\Sigma}}) \cong \Coh(\mathcal{B}_{\mathbf{\Sigma}})$. 
\end{proof}

%% file: sections/B-side_toric_model.tex

\section{Toric models on the B-side}\label{sec:toric-model-on-B-side}

Recall from \cite{vanderput-saito} that the Painlevé character varieties come in families of complex affine cubic surfaces. For each Painlev\'e type $\tau$, we denote by \(\bM_{\tau}^{\un}\) the Painlevé character variety characterized by the parameter choices of \zcref{table:pain_params}. 
This choice of parameters is equivalent to asking that all topological monodromies around punctures are unipotent, and that the formal monodromies at irregular singularities are either the identity matrix or
{\tiny $
\begin{pmatrix}
0 & -1 \\
1 & 0
\end{pmatrix}
$}. 
This in turn is equivalent to asking that the microlocal monodromy along the Stokes Legendrian is trivial.

It turns out that, in the affine description of the Painlevé cubics, \(\bM_{\tau}^{\un}\) corresponds to the most singular member of the respective family.

Denote by \(\widetilde \bM_{\tau}^\un \) its minimal resolution.  We will also use $\tau$ to denote the stacky fan associated to each type in \zcref{fig:stacky-toric-models}.  Let \(\cB_\tau \) be as in Construction~\ref{construction:minimal_resolution}.  

Here we will prove: 

\begin{theorem} \label{thm: B precise}
    For each Painlev\'e type $\tau$, there is an isomorphism $\mathcal{B}_\tau \cong \widetilde \bM_{\tau}^\un $. 
\end{theorem}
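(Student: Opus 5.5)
The plan is to compare both sides through their rings of global functions and their compact curves. We first show that $\mathcal{B}_\tau$ maps properly and birationally onto the affine cubic $\bM^\un_\tau$, contracting exactly its compact curves. We then use that the minimal resolution of a normal surface with rational double points is unique.

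\emph{Step 1: explicit description of $\mathcal{B}_\tau$.} By Construction~\ref{construction:minimal_resolution}, $\mathcal{B}_\tau$ is obtained from the toric surface $Y_\tau = |\cX_\tau|$, whose fan has rays $v_i$, by blowing up iteratively $k_i$ times over $-1 \in D_i$ and then deleting the strict transforms $\widetilde D_i$. For the fans in \zcref{fig:stacky-toric-models} that are not complete, we first compactify $Y_\tau$ by adding rays so that the added boundary divisors, together with the $\widetilde D_i$, form a boundary $\partial$. Then $\mathcal{B}_\tau = \overline{Y}_\tau' \setminus \partial$, where $\overline{Y}_\tau'$ is the blown-up compactification. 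The compact curves in $\mathcal{B}_\tau$ are the exceptional curves other than the last one in each chain. These are $(k_i-1)$ chains of $(-2)$-curves, which are the red curves in \zcref{fig:B-side toric models}, together with in some cases (e.g.\ $\PII$, $\PIV$, $\PV$, $\PVI$) the strict transforms of curves in $(\C^*)^2$ passing through several blown-up points. We record the resulting configuration of $(-2)$-curves, which is an ADE graph.

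\emph{Step 2: global functions.} Regular functions on $\mathcal{B}_\tau$ are Laurent polynomials $f \in \C[M]$ whose poles along the boundary divisors are allowed, but which, along each $D_i$, satisfy vanishing conditions of order $k_i$ at $-1$ after the blowups. Concretely, writing $f$ near $D_i$ in local coordinates $(z,t)$ with $D_i=\{z=0\}$, we require the polar part of $f$ in $z$ to be divisible by $(t+1)^{\text{appropriate power}}$, so that $f$ is regular on the chart $(z,s)$ with $t+1 = z^{k_i}s$ of the iterated blowup. We produce three generators $x_1,x_2,x_3$ (this is where the ChatGPT-suggested formulas enter; we verify them directly) and check that they satisfy the cubic relation of $\bM^\un_\tau$ from \cite{vanderput-saito} with the parameters of \zcref{table:pain_params}. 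We then show that they generate $\mathcal{O}(\mathcal{B}_\tau)$. For this we filter $\C[M]$ by pole orders along the boundary divisors and compare dimensions graded by the tropical (Newton polygon) degree, i.e.\ we compute $h^0$ of $\mathcal{O}(n\partial)$-type line bundles on $\overline{Y}'_\tau$ by Riemann--Roch and compare with the Hilbert function of the cubic.

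\emph{Step 3: conclude.} The map $\pi=(x_1,x_2,x_3): \mathcal{B}_\tau \to \bM^\un_\tau$ is the affinization morphism. It is proper: $\mathcal{B}_\tau$ is the complement of the support of an ample-ish boundary, as in Looijenga pairs, so affinization is proper and contracts exactly the compact curves. It is birational, since both contain the torus chart, or by a degree check. Since $\mathcal{B}_\tau$ is smooth and contains no $(-1)$-curves (all compact curves are $(-2)$-curves from Step 1), $\pi$ is the minimal resolution, giving $\mathcal{B}_\tau \cong \widetilde\bM^\un_\tau$. As a consistency check, the ADE types of the contracted configurations should match the singularities of the most singular cubic (e.g.\ $A_1$ for $\PII$, and $D_4$ for $\PVI$).

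\emph{Main obstacle.} The hardest part is Step 2, proving that the three functions generate the full ring, case by case for ten types. We would first try the graded dimension comparison, using the cluster-like/Looijenga-pair structure (theta functions of Gross--Hacking--Keel as a basis) to organize the count. If that becomes unwieldy, we would instead verify normality of the cubic together with the fact that $\pi$ is an isomorphism off the compact curves. Normality plus Zariski's main theorem then gives surjectivity onto the coordinate ring.
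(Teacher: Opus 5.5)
Your overall architecture (a morphism to the cubic, properness, absence of $(-1)$-curves, uniqueness of the minimal resolution) is sound. There is nonetheless a genuine gap: the statement is a concrete case-by-case computation, and the proposal never produces the object it intends to verify. You defer the three functions $x_1,x_2,x_3$ to an outside source and give neither formulas nor a method to find them, so Steps 2 and 3 have nothing to act on.

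The paper's proof consists precisely of this missing content. For each type it writes down explicit functions: e.g.\ $x=-(u+v)/w$, $y=-(u+w)/v$, $z=-(v+w)/u$ on the blowup of $\bP^2$ for $\mathrm{PII}$, or $x=-(u_1+u_2+u_3)^2/(u_2u_3)$ and its cyclic permutations for $\mathrm{PVI}$. It then checks in blowup charts that these have no poles along the exceptional curves. Surjectivity, and injectivity off the contracted locus, come from an explicit inverse rational map together with a description of the remaining curves of $\bM^\un_\tau$ as images of specified exceptional curves. Finally it identifies the contracted curves (the strict transform $\widetilde L$ of a line through the centres, and the non-final exceptional curves) as $(-2)$-curves forming the ADE graph of the singularity.

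What you call the main obstacle is also unnecessary. You propose to prove, by a Riemann--Roch or theta-function count, that $x_1,x_2,x_3$ generate $\cO(\mathcal{B}_\tau)$. But take a surjective morphism that is injective away from finitely many compact curves, which it contracts to the singular points. The cubic is normal (a hypersurface with isolated singularities), so Zariski's main and connectedness theorems already give that the map is proper and birational and that $\cO(\mathcal{B}_\tau)=\cO(\bM^\un_\tau)$. That is your fallback, and it is essentially the paper's argument.

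Second, the one explicit tool you do supply is wrong. In Construction~\ref{construction:minimal_resolution} each further blowup is centred where the newest exceptional curve meets the strict transform of $D_i$. So the chart containing the last exceptional curve $E_{i,k_i}$ away from $\widetilde D_i$ is $z=(t+1)^{k_i}c$ with $c$ invertible (compare $v=c(u+1)^2$ in the paper's $\mathrm{PIII}(D_6)$ computation). Equivalently $\nu_{E_{i,j}}(z)=j$ and $\nu_{E_{i,j}}(t+1)=1$, and $f=\sum_n g_n(t)z^n$ has no pole along $E_{i,1},\dots,E_{i,k_i}$ if and only if $(t+1)^{k_i n}$ divides $g_{-n}$ for every $n>0$.

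Your chart $t+1=z^{k_i}s$ instead arises from repeatedly blowing up where the exceptional curve meets the strict transform of the fibre $\{t=-1\}$. On that surface $E_{i,1}$ meets $\widetilde D_i$ and $E_{i,k_i}$ is a compact $(-1)$-curve, contradicting your Step 1. For $k_i=2$ your criterion accepts $(t+1)/z^2$, which in fact has a pole of order $3$ along $E_{i,2}$. A dimension count run with this criterion would compute the ring of the wrong surface.

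Two smaller points. Properness from an ``ample-ish'' boundary requires actually exhibiting a nef and big divisor supported on $\partial$, which you do not do. The absence of $(-1)$-curves should not rest on an unproved enumeration of compact curves. It follows directly from the fact that every blowup is centred at a smooth point of the boundary, so $K_{\overline Y'_\tau}=-\partial$ and $K_{\mathcal{B}_\tau}$ is trivial. Any contracted curve $C$ then has $K\cdot C=0$ and $C^2<0$, hence is a smooth rational $(-2)$-curve.
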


\zcref{thm: mirror} now follows formally by combining the  result of \cite{beimler-olsen} that the Weinstein structures of the generic Painlev\'e character varieties are given by attaching along the Legendrians of \zcref{fig:painleve_handlebodies}; \zcref{thm:torus_skeleta} identifying this presentation with the toric presentation of Figure \ref{fig:torus_skeleta}; the mirror symmetry \zcref{cor: nonstacky mirror}; and finally the recharacterization of the mirrors in \zcref{thm: B precise}.  

\begin{remark}
    In the case of \(\PI, \PIIIDseven, \PIIIDeight \), the fiber $\bM_\tau^\un$ is smooth, hence equal to its resolution, and deformation equivalent to the generic fiber.  (In fact all fibers in the family are smooth.)  Thus our results assert that it is self-mirror. 
\end{remark}













\begin{table}[ht]
\centering

\begin{tabular}{l|c|p{6cm}|c}
\hline
\textbf{Painlevé type} & \textbf{Katz invariants} & \textbf{Parameters} & \textbf{Choice of parameters} \\
\hline
$PI$
& $(-,-,\frac52)$
& none
& \\
\hline

$PII$
& $(-,-,3)$
& $\alpha$ determines the formal monodromy $\gamma_\infty = D_\alpha$.
& $\alpha = 1$\\
\hline

$PII(FN)$
& $(0,-,\frac32)$
& $s =tr(M_0)$
& $s=2$\\
\hline

$PIII(D_6)$
& $(1,-,1)$
& $\alpha,\beta$ determine the formal monodromies $\gamma_0 = D_\alpha$, $\gamma_\infty = D_{\beta}$.  
& $\alpha = \beta = 1$\\
\hline

$PIII(D_7)$
& $(\frac12,-,1)$
& $\alpha$ determines the formal monodromy $\gamma_\infty = D_{\alpha}$.
& $\alpha = 1$\\
\hline

$PIII(D_8)$
& $(\frac12,-,\frac12)$
& none
& \\
\hline

$PIV$
& $(0,-,2)$
& $s_1 = tr(M_0)$, $s_2$ determines the formal monodromy $\gamma_\infty = D_{s_2}$.
& $s_1=2$, $s_2=1$ \\
\hline

$PV(deg)$
& $(0,0,\frac12)$
& $s_0 = tr(M_0),s_1 = tr(M_1)$.
& $s_0 = s_1 = 2$\\
\hline

$PV$
& $(0,0,1)$
& $s_1 = tr(M_0), s_2= tr(M_1)$, $s_3$ determines the formal monodromy $\gamma_\infty = D_{s_3}$.
& $(s_1, s_2, s_3) = (2,2,1)$\\
\hline

$PVI$
& $(0,0,0,0)$
& $a_i = tr(M_i)$ for $i \in \{1, \dots, 4\}$.
& $(a_1, a_2, a_3, a_4) = (2,2,2,2)$\\
\hline
\end{tabular}

\caption{Painlevé monodromy-space parameters and their local monodromy meaning.  Here, following \cite{vanderput-saito}, we denote a Painlevé type by a triple $(a,b,c)$ denoting the Katz invariant of the (possibly irregular) singularity at the points $p = 0, 1, \infty \in \P^1$.  We write $M_p$ for topological monodromy and $\gamma_p$ the formal monodromy. We write $D_\alpha := \mathrm{diag}(\alpha, \alpha^{-1})$.
}
\label{table:pain_params}
\end{table}

The remainder of this section gives the proof of \zcref{thm: B precise}. In each case we explicitly write down formulas for three regular functions \(x, y, z \in \cO(\mathcal{B}_\tau)\) satisfying the given equation for $\bM_{\tau}^\un$, hence defining a map $\mathcal{B}_\tau \to \bM_{\tau}^\un$. 
Once we have the expressions for these functions, 
checking that they are  indeed regular, and also that the resulting map is indeed a minimal resolution, is a direct calculation in coordinates. We record it below.

\subsection{Painlevé I}

We have 
\begin{equation}\label{eq:moduli_PI}
\bM_{\PI} \cong \{ xyz - x - z - 1  = 0\}.
\end{equation}
In this case \(\bM_\PI = \bM_\PI^\mathrm{un}\) is smooth. In fact, it is the \(A_2\)-cluster variety, and our regular functions will be cluster variables. 
The toric model is
\(X_{\PI} = \bA^1_u \times \bA^1_v\). 
Denote \( D_u = \{ u = 0 \} \), \(D_v = \{v =0 \}\). We blow up at \( (0, -1)\in D_u\) and \(  (-1, 0) \in D_v\).  Let \(\widetilde X_\PI\) be the resulting surface, \( \widetilde D_u  \) and \(\widetilde  D_v \) be the strict transforms, and \(E_u \) and \(E_v\) be the corresponding exceptional divisors. Then by our construction 
\[\cB_\PI = \widetilde X_\PI \setminus (\widetilde D_u \cup \widetilde D_v).
\]

Consider the rational functions
\begin{equation*}
    x = \frac{u+1}{v}, \qquad y = \frac{v+1}{u}, \qquad z = u.
\end{equation*}
They satisfy the equation~\eqref{eq:moduli_PI}, hence define a birational map
\begin{equation}\label{eq:birational_PI}
    \cB_\PI \dashrightarrow \bM_\PI. 
\end{equation}

We check that these functions are indeed regular. 
Since we remove \(\widetilde D_u\) and \(\widetilde D_v\) from \(\cB_\PI\), the only possible poles are on the exceptional divisors \(E_u\) and \(E_v\).

Let us compute in local coordinates. 
After blowing up at \( (-1, 0)\), there are two local charts: 
\begin{align*}
    &(a, v) \quad \text{where} \quad u = av - 1, \qquad
     \text{and}\\
    &(b, u) \quad \text{where} \quad v = b(u + 1),
\end{align*}
and locally \( \widetilde D_v \) is given by \( \{ b = 0\}\).  
On the first chart, the functions become
\[
x=a, \qquad y= \frac{v+1}{av-1}, \qquad z=av-1,
\]
and there are no poles around the exceptional divisor \(E_v\), locally given by \(\{v = 0\}\). On the second chart, the functions become
\[
x = \frac{1}{b}, \qquad y = \frac{1 + b + bu}{u}, \qquad z = u.
\]
Since \(u\) is invertible along the exceptional divisor, 
the only possible pole is \( b = 0\), which is on \(\widetilde D_v\), and removed from \( \cB_\PI\). Hence these functions are regular on \(\cB_\PI\) around the blowup at \( (-1, 0)\). 

Similarly, the functions are also regular on \(\cB_\PI\) around the blowup at \( (0, -1)\). 

Now we show that the map is a bijection. In fact we have the inverse map:
\[
u = z, \qquad v = yz  - 1. 
\]
We only need to take care of the loci \( \{u = 0\}\) and \( \{v = 0\}\). 

If \(v = 0\), then \( yz = 1\).  By the equation in~\eqref{eq:moduli_PI}, we get \( y = z = -1\).  By the above local coordinate calculation, the map \eqref{eq:birational_PI} sends the line \(E_v \cap \cB_{\PI}\) to  the line \(\{ y = z =-1 \} \). 

Similarly, if \(u = 0\), then \(z = 0\) and \(x = -1\). The map \eqref{eq:birational_PI} sends the line \(E_u \cap \cB_\PI \) to the line \( \{ x = -1, z = 0 \} \). Hence~\eqref{eq:birational_PI} is an isomorphism.

\subsection{Painlevé II/II(FN)} 
We have 
\begin{align*}
    \bM_\PII &\cong \{  xyz - x - \alpha y - z + \alpha + 1 = 0 \mid \alpha \in \bC^*\} \\
    \bM_\PIIFN
    &\cong \{ xyz - x - y - z+ s = 0 \mid s \in \bC \}. 
\end{align*}

For \(\bM_\PII\), the most singular fiber is over \(\alpha = 1\). For \(\bM_\PIIFN\), the most singular fiber is over \(s = 2\). 
In both cases, 
\begin{equation}\label{eq:moduli_PII_un}
 \bM_{\PII}^\un \cong \bM_{\PIIFN}^\un \cong \{ xyz - x - y - z +2 =0\},
\end{equation}
which has an \(A_1\) singularity at \( (x, y, z) = (1, 1, 1)\). (For \(\PIIFN\) we replace \(x, z \) by \(-x, -z\).)

The toric model is  
\(X_{\PII/\PIIFN} = \bP^2_{u, v, w}\).  Denote the boundary divisors by
\(
D_u=\{u=0\}\), \( D_v=\{v=0\}\), and \( D_w=\{w=0\}\). 
We blow up at the three points
\[
p_u=[0:1:-1],\qquad 
p_v=[1:0:-1],\qquad 
p_w=[1:-1:0].
\]
Let \(\widetilde X_{\PII/\PIIFN}\) be the resulting surface, and denote by
\(\widetilde{D}_u,\widetilde{D}_v,\widetilde{D}_w\) the strict transforms of
the boundary divisors. Then by our construction
\[
\cB_{\PII/\PIIFN}
=
\widetilde{X}_{\PII/\PIIFN}
\setminus
(\widetilde{D}_u\cup \widetilde{D}_v\cup \widetilde{D}_w).
\]

Consider the rational functions
\begin{equation}\label{eq:regular_functions_for_PII}
    x=-\frac{u+v}{w},\qquad
    y=-\frac{u+w}{v},\qquad
    z=-\frac{v+w}{u}.
\end{equation}
They satisfy the equation in~\eqref{eq:moduli_PII_un}, 
hence define a rational map
\begin{equation}\label{eq:resolution_PII}
    \cB_{\PII/\PIIFN}\dashrightarrow \bM_{\PII/\PIIFN}^{\mathrm{un}}.
\end{equation}

We check regularity near \(p_u=[0:1:-1]\); the other two cases are cyclically identical.
Take the affine chart \(v=1\), with coordinates \((u,w)\). Then \(p_u=(0,-1)\), and
\[
x=-\frac{u+1}{w},\qquad
y=-(u+w),\qquad
z=-\frac{1+w}{u}.
\]
After blowing up \(p_u\), consider the chart
\[
w+1=au.
\]
Then
\[
x=-\frac{u+1}{au-1},\qquad
y=1-(a+1)u,\qquad
z=-a,
\]
so there are no poles on the exceptional divisor \( \{ u = 0\}\). On the other chart
\[
u=b(w+1),
\]
we have
\[
x=-\frac{b(w+1)+1}{w},\qquad
y=-(b(w+1)+w),\qquad
z=-\frac{1}{b}.
\]
The only possible pole is along \(b=0\), which is on the strict transform
\(\widetilde{D}_u\), hence removed from \(\cB_{\PII/\PIIFN}\).
Thus the functions are regular near \(p_u\). By cyclic symmetry, they are regular near
\(p_v\) and \(p_w\) as well. Therefore~\eqref{eq:resolution_PII} is a morphism.

Next, consider the line
\[
L=\{u+v+w=0\}\subset \bP^2.
\]
It passes through \(p_u,p_v,p_w\). Along \(L\), we have
\[
x=y=z= 1.
\]
Hence the strict transform of \(L\), which we denote by \(\widetilde L\), is contracted to the
singular point \((1,1,1)\).

Away from the contracted curve, the inverse map is given by solving the above linear
system. For example, wherever the following expression is nonzero, we have
\[
[u:v:w]=[1-xy:x-1:y-1].
\]
Cyclically, one obtains the corresponding expressions on the remaining charts. Thus the
morphism~\eqref{eq:resolution_PII} is an isomorphism away from \(\widetilde L\).  Finally,
\(\widetilde L\) was a line in \(\bP^2\) on which we blew up three points, so $\widetilde L^2 - 1-3= -2$. Since the surface~\eqref{eq:moduli_PII_un} has an
\(A_1\)-singularity at \((1,1,1)\), the morphism~\eqref{eq:resolution_PII}
is the minimal resolution.

\subsection{Painlevé III(D6)/V(deg)}  We have
\begin{align*}
\bM_{\PIIIDsix} &= \{ xyz + x^2 + y^2 + (1 + \alpha \beta) x + (\alpha + \beta)y + \alpha \beta = 0 \mid \alpha ,\beta \in \bC^* \}\\
\bM_{\PVdeg} &= \{ xyz + x^2 + y^2 + s_0 x + s_1 y + 1= 0 \mid s_0, s_1 \in \bC \}
\end{align*}
For \(\PIIIDsix\), the singular fibers lie over \( \alpha = \beta \) and \(\alpha \beta = 1\). 
We consider the fiber over \( \alpha = \beta = 1\). 

For \(\PVdeg\), the singular fibers lie over \(s_0 = \pm 2\) and \(s_1 = \pm 2\). We consider the fiber over \( s_0 = s_1 = 2\). Hence
\begin{equation} \label{eq:eq_MPIIIDsix_un}
    \bM_{\PIIIDsix}^\mathrm{un} \cong \bM_{\PVdeg}^\mathrm{un} \cong \{ xyz + x^2 + y^2 + 2x + 2y + 1 = 0 \}.
\end{equation}
There are two \(A_1\) singularities at \((x, y, z) = (0, -1, 2)\) and \( (x, y, z) = (-1, 0, 2)\). 

The toric model is \( X_{\PIIIDsix / \PVdeg} \cong \bA^1_u  \times \bA^1_v \), and we blow up \( (u, v) = (-1, 0) \) and \((u, v) = (0, -1)\), twice at each point. Denote the resulting surface by \(\widetilde X_{\PIIIDsix/\PVdeg} \), and the strict transform of \(D_u = \{u=0\} \) and \(D_v = \{v= 0 \}\) by \( \widetilde D_u\) and \(\widetilde D_v\). Then 
\[
\cB_{\PIIIDsix/\PVdeg} = \widetilde X_{\PIIIDsix/\PVdeg} \setminus (\widetilde D_u\cup \widetilde D_v). 
\]

The regular functions are \begin{equation*}
    x = u, \qquad y = v, \qquad z = - \frac{u^2 + v^2 + 2u + 2v + 1}{uv}
\end{equation*}
which give rise to a rational map
\begin{equation}\label{eq:resolution_PIIIDsix}
\cB_{\PIIIDsix/\PVdeg} \dashrightarrow \bM_{\PIIIDsix/\PVdeg}^\un. 
\end{equation}

First, let us show the regularity of these functions. We only need to consider the local coordinates around \( (-1, 0) \in D_v\). The case for \( (0, -1)\) is symmetric.  After the first blowup, there are two charts:
\begin{align*}
    u+1 = av \qquad \text{and} \qquad v = b (u+1).
\end{align*}
On the first chart we have
\[
x = av -1 , \qquad y =v, \qquad z = - \frac{(a^2+1)v + 2}{av-1}
\]
so there are no poles around the exceptional divisor \( \{ v = 0 \}\). 
On the second chart we have
\[
x = u, \qquad y = b(u+1), \qquad z = -\frac{ (u+1) + b^2 (u+1) + 2b}{b u},
\]
and the strict transform of \(D_v\) coincides with \( \{b = 0\}\). 
Then we perform the second blowup at \( (u, b) = (-1, 0)\). After the second blowup, there are also two charts:
\[
 u+1 = db, \qquad \text{and} \qquad b = c (u+1). 
\]
On the first chart we have 
\[
x = db -1, \qquad y= b^2 d, \qquad  z = -\frac{b^2 d + d +2}{db-1}
\]
so there are no poles around the exceptional divisor \( \{ b = 0\}\). On the second chart we have
\[
x = u, \qquad y = c(u+1)^2, \qquad z = - \frac{c^2 (u+1)^2 + 2c +1 }{cu}
\]
where the only pole at \( c = 0\) is on \(\widetilde D_v\), which is removed. 

Note that the exceptional divisor of the first blowup is mapped to the singular point \( (-1, 0, 2) \), and the exceptional divisor of the second blowup is mapped to the line \( \{x = -1, y= 0\}\).

Similarly, if we look at the blowups performed at \((0, -1) \in D_u\), the first exceptional divisor is mapped to the singular point \( (0, -1, 2)\), and the second  to the line \( \{ x = 0, y =-1 \}\).

From the defining equation~\eqref{eq:eq_MPIIIDsix_un}, we see that \(\bM_{\PIIIDsix/\PVdeg}^\un\) is stratified by the torus \( (\bC^*)^2_{x, y} \) together with the two lines \( \{x = -1, y= 0\}\) and \( \{ x = 0, y =-1 \}\). 
Hence \eqref{eq:resolution_PIIIDsix} is a bijection everywhere except the singular points. 

The two curves contracted to the two \(A_1\)-singularities both have self-intersection number \(-2\). 
Combining all the arguments above, we have shown that \eqref{eq:resolution_PIIIDsix} is the minimal resolution.

\subsection{Painlevé III(D7)}We have
\[
 \bM_{\PIIIDseven} = \{ xyz + x^2 + y^2 + \alpha x + y = 0 \mid \alpha \in \bC^* \}.
\]
Here all fibers are smooth. We consider the fiber over \(\alpha = 1\). Hence
\begin{equation}\label{eq:moduli_PIIID7}
 \bM_{\PIIIDseven}^\mathrm{un} = \{ xyz + x^2 + y^2 + x + y = 0\}.
\end{equation}

The toric model \(X_{\PIIIDseven}\) is given by the fan  \begin{tikzpicture}[Blue, scale = .5, line width = .5pt]
    \fill[orange, opacity = .3] (0, 0) -- (1, 0) -- (1, 1);
    \fill[Green, opacity = .3] (0, 0) -- (0, 1) -- (1, 1);
    \draw[-Stealth] (0,0) -- (1, 0);
    \draw[-Stealth] (0, 0) -- (0, 1); 
    \draw[-Stealth] (0, 0) -- (1, 1); 
    
\end{tikzpicture}. 
In other words, we take \(\bA_u^1 \times \bA_v^1\) and blow up at \( (u, v) = (0, 0)\). Denote the exceptional divisor by \(D_e\). Then we blow up three points: \( (0, -1)\) on \(D_u = \{ u = 0\}\), \( (-1, 0)\) on \(D_v = \{ v = 0\} \), and \( \frac{u}{v} = -1\) on  \(D_e\). Let \(\widetilde X\) be the resulting surface, and \(\widetilde D_u\), \(\widetilde D_v\), \(\widetilde D_e\) be the strict transforms. Then
\[
\cB_\PIIIDseven = \widetilde X_\PIIIDseven \setminus ( \widetilde D_u \cup \widetilde D_v \cup \widetilde D_e).
\]

Consider the rational functions
\begin{equation*}
    x = u , \qquad y = {v}, \qquad z = -\frac{u}{v} - \frac{v}{u} - \frac{1}{u} - \frac{1}{v}.
\end{equation*}
They define a birational map
\begin{equation}\label{eq:resolution_PIIID7}
\cB_{\PIIIDseven} \dashrightarrow \bM_{\PIIIDseven}^\un. 
\end{equation}

Let us first check the regularity around \( (u, v) = (-1, 0)\). After blowup, first consider the chart 
\[
 u+ 1 = a v,
\]
where 
\[
 x = av -1, \qquad y = v, \qquad z = - \frac{1 - a + v + a^2 v}{av - 1}. 
\]
There are no poles on the exceptional divisor \( \{ v = 0 \} \). On the chart 
\[
 v = b (u+1), 
\]
we have 
\[
x = u, \qquad y = b(u+1), \qquad z = - \frac{1}{b} - \frac{b(u+1)+1}{ u}. 
\]
The only possible pole on the exceptional divisor \( \{u = - 1\}\) is \( b = 0\), which is on \(\widetilde D_v\), hence removed. 

The case for the blowup at \( (u, v) = (0, -1)\) is symmetric. 

Now we consider the blowup on \(D_e\). Take the chart:
\[
u = rv. 
\]
Then
\[
x = rv, \qquad y = v, \qquad z = -r - \frac{1}{r} - \frac{r+1}{rv}. 
\]

The point on which we perform the blowup is \( (r, v) = (-1, 0) \). After the non-toric blowup, first consider the chart
\[
 r+1 = av 
\]
on which
\[
x = (av-1)v, \qquad y = v, \qquad z = 1-av - \frac{a+1}{av-1}. 
\]
There are no poles near the exceptional divisor \( \{v = 0\}\). On the chart
\[
v = b(r+1), 
\]
we have 
\[
x = rb(r+1) , \qquad y = b(r+1), \qquad z = -r - \frac{1}{r} - \frac{1}{rb}. 
\]
The only possible pole on the exceptional divisor \( \{ r = -1  \}\) is \( b = 0\), which is on the strict transform of the divisor \(\widetilde D_e\), hence removed. 

Then we check that \eqref{eq:resolution_PIIID7} is a bijection. From the defining equation~\eqref{eq:moduli_PIIID7}, one observes that \(\bM_{\PIIIDseven}^\un\) has a stratification:
\[
\bM_\PIIIDseven^\un \cong (\bC^*)^2_{x, y} \sqcup \{ x = 0, y = -1\} \sqcup \{ x = -1, y = 0 \} \sqcup \{ x = 0, y = 0\}. 
\]
By the local coordinate computation above, the morphism~\eqref{eq:resolution_PIIID7} sends the three exceptional divisors to the three lines \( \{ x= 0, y= -1\}\), \( \{x = -1, y=0\}\), and \( \{x = 0, y= 0\}\). Hence it is an isomorphism.

\subsection{Painlevé III (D8)}
We have
\[
 \bM_{\PIIIDeight} = \{xyz + x^2 +y^2 + y = 0 \}.
\]
The toric model \(X_{\PIIIDeight}\) is given by the fan
\begin{tikzpicture}[scale = .5]
    \fill[Green, opacity = .3] (1, 0) -- (1, 2) -- (0, 0); 
    \draw[-Stealth] (0,0) -- (1, 0);
    \draw[-Stealth] (0, 0) -- (1, 2); 
    \fill (1, 1) circle (2pt);
\end{tikzpicture},
i.e. it is the quadratic cone \(\{ ac = b^2\}\). Then we blow up the two points \( (a, b, c)  = (-1, 0, 0)\) and \( (a, b, c) = (0, 0, -1)\). Denote the resulting surface by \(\widetilde X_\PIIIDeight\), the toric divisors by \(D_a = \{a = 0\}  \) and \( D_c = \{ c =0\}\), and their strict transform by \(\widetilde D_a \) and \(\widetilde D_c\). Then
\[
\cB_\PIIIDeight = \widetilde X_\PIIIDeight \setminus ( \widetilde D_a \cup \widetilde D_c ). 
\]

The rational functions 
\begin{equation*}
    x = b, \qquad
    y=  a, \qquad
    z = -\frac{a+c+1}{b}
\end{equation*}
define a map
\begin{equation}
    \label{eq:resolution_PIIID8}
    \cB_\PIIIDeight \dashrightarrow \bM_\PIIIDeight. 
\end{equation}

Let us check the regularity around \( (a, b) = (-1, 0) \). On the chart
\[
a + 1 = ub,
\]
we have 
\[
x = b, \qquad y = a, \qquad z = - \frac{ u (ub-1) + b }{ub - 1}.
\]
There are no poles. On the chart
\[
b = v(a+1)
\]
we have
\[
x = v(a+1), \qquad y = a, \qquad z = -\frac{a + v^2(a+1)}{av}.
\]
The only possible pole is \( v = 0\), which is on \(\widetilde D_c\), hence removed. 

The regularity around \( (c, b) = (-1, 0) \) is similar.

Now we show that the map~\eqref{eq:resolution_PIIID8} is a bijection. The space \(\bM_\PIIIDeight\) is stratified as:
\[
\bM_\PIIIDeight \cong (\bC^*)^2_{x, y} \sqcup \{ x =0, y=0\} \sqcup \{ x = 0, y = -1\}. 
\]
The morphism~\eqref{eq:resolution_PIIID8} sends the two exceptional divisors to the two lines \( \{ x = 0 , y = 0\}\) and \(\{ x = 0, y = -1\}\), hence it is bijective.

\subsection{Painlevé IV} We have
\[ 
\bM_{\PIV} = \{ xyz + x^2 - (s_2^2 + s_1 s_2) x - s_2^2 y -s_2^2 z + s_2^2 + s_1 s_2^3 = 0 \mid s_1 \in \bC, s_2 \in \bC^* \} 
\]
The singular fibers lie over \( \{s_1 = \pm 2\}\) and \( \{s_1 = s_2 + s_2^{-1} \}\).  We consider the most singular fiber at \( (s_1 , s_2) = (2, 1)\). Hence
\[
\bM_{\PIV}^\mathrm{un} = \{ xyz + x^2 -3 x - y - z + 3 =0 \}.
\]
There is an \(A_2\) singularity at \( (x, y, z) = (1, 1, 1)\). 

The toric model is
\(\bA^1_u \times \bP^1_v\). We blow up once at  \( (u, v) = (0, -1) \) and \( (u, v) = (-1, 0)\), and twice at \( (-1, \infty)\). Denote \(t = \frac{1}{v}\), the toric divisors by \(D_u = \{ u = 0\}\), \( D_v = \{v = 0\}\), \(D_t = \{t = 0\}\), their strict transforms by \( \widetilde D_u\), \( \widetilde D_v \), \(\widetilde D_t\), the blown up surface by \(\widetilde X_\PIV\), so
\[
\cB_{\PIV} = \widetilde X_\PIV \setminus ( \widetilde D_u \cup \widetilde D_v \cup \widetilde  D_t).
\]
The rational functions 
\begin{equation*}
    x = -u, \qquad 
    y = -\frac{ (u+1)^2 v + 1}{u}, \qquad 
    z = -\frac{u + v +1}{ uv}
\end{equation*}
define a map
\begin{equation}
    \label{eq:resolution_PIV}
    \cB_\PIV \dashrightarrow \bM_\PIV^\un.
\end{equation}

First, let us check the regularity around \( ( u, v) = (0, -1)\). On the chart
\[
v + 1 = au, 
\]
we have 
\[
x = -u, \qquad y = u + 2 - a(u+1)^2, \qquad z = - \frac{a+1}{au - 1}. 
\]
In this chart, the exceptional divisor \( E_u\) coincides with \( \{ u = 0\} \), on which there are no poles. Under~\eqref{eq:resolution_PIV}, \(E_u\) is mapped to the line \( \{ x = 0, y+z = 3\}\). On the other chart 
\[
u = b(v+1)
\]
we have
\[
x = -b(v+1), \qquad y = - \frac{b^2 v^2+b^2v + 2 bv +1}{b}, \qquad z = -\frac{b+1}{bv}.
\]
In this chart, \(E_u\) coincides with \( \{ v = -1\}\), on which the only possible pole is \( (b, v) = (0, -1)\). This point is also on \(\widetilde D_u\), hence removed. 

Next, consider the blowup at \((-1, 0)\). On the chart
\[
u + 1 = av
\]
we have 
\[
x = 1 - av, \qquad y = - \frac{a^2 v^3 + 1}{av - 1}, \qquad z = -\frac{a+1}{av - 1}.
\]
In this chart the exceptional divisor \(E_v\) coincides with \( \{v = 0\}\), on which there are no poles. Under the map~\eqref{eq:resolution_PIV}, \(E_v\) is sent to the line \( \{x = 1, y=1\}\). On the chart 
\[
v = b(u+1)
\]
we have 
\[
x = -u, \qquad y = -\frac{b(u+1)^3 + 1}{u}, \qquad z = -\frac{b+1}{bu}. 
\]
The only possible pole is \( (b, u) = (0, -1)\), which is on \(\widetilde D_v\), hence removed. 

Finally, we consider the blowups at \( (u, t) = (-1, 0)\). We blow up once and consider the chart:
\[
u+1 = at.
\]
We have 
\[
x = 1- at, \qquad y = -\frac{a^2 t +1 }{at-1}, \qquad z = -\frac{at^2+1}{at - 1}. 
\]
The exceptional divisor \(E_{t, 1}\) coincides with \(\{t = 0\}\), on which there are no poles. Under~\eqref{eq:resolution_PIV}, \(E_{t, 1}\) is sent to the point \((1, 1, 1)\). On the other chart
\[
t = b(u+1), 
\]
we have 
\[
x = -u, \qquad y = -\frac{b+u+1}{bu}, \qquad z = -\frac{b(u+1)^2 +1}{ u }.
\]
We perform the second blowup at  \( (b, u) = (0, -1)\) and consider the chart
\[
u+1 = db,
\]
on which
\[
x = 1 -db, \qquad y = -\frac{d+1}{db - 1}, \qquad z = -\frac{d^2 b^3 + 1}{db -1}. 
\]
The exceptional divisor \(E_{t, 2}\) in this chart coincides with \( \{ b = 0\}\). There are no poles on it. Under~\eqref{eq:resolution_PIV}, \(E_{t, 2}\) is mapped to the line \( \{x = 1, z = 1\}\). On the other chart
\[
 b = c(u+1), 
\]
we have 
\[
x= -u, \qquad y = -\frac{c+1}{cu}, \qquad z = -\frac{c(u+1)^3+1}{u}.
\]
The only possible pole is at \( (c, u) = (0, -1)\), which is on \(\widetilde D_t\), hence removed. 

Now we show that the map \eqref{eq:resolution_PIV} is surjective. Indeed, we can write the inverse rational map:
\[
u = -x, \qquad v = \frac{xy - 1}{(1-  x)^2},
\]
which is defined except for the loci \( \{x = 0\} \) and \( \{x = 1\}\). If \(x = 0 \), then \( y+ z= 3\). This line is identified with \( E_u \). If \(x = 1\), then \(  (y-1) (z-1) = 0\). These two lines are covered by \( E_v\) and \(E_{t, 2}\). Hence~\eqref{eq:resolution_PIV} is surjective.

Let \( L = \{ u = -1\}\), and denote its strict transform by \(\widetilde L\). Along \(\widetilde L\), we have \(x = y = z = 1\). Hence the map contracts the curves \(\widetilde L\) and \(E_{t, 1}\). They are both \(-2\)-curves, and intersect transversally at one point. The above local calculations show that~\eqref{eq:resolution_PIV} is injective away from \(\widetilde L\) and \(E_{t, 1}\). Combining everything above, we see that~\eqref{eq:resolution_PIV} is a minimal resolution.

\subsection{Painlevé V}
We have
\[
\bM_{\PV} = \{ xyz + x^2 + y^2 - (s_1 + s_2 s_3) x - (s_2 + s_1 s_3) y - s_3 z + s_3^2 +  s_1 s_2 s_3 +1 = 0 \mid s_1, s_2 \in \bC, s_3 \in \bC^*\}. 
\]

This is singular when \( \{ s_1 = \pm 2 \}\), \( \{s_2 = \pm 2\}\), and \( \big\{(s_1^2+ s_2^2) s_3^2 - s_1 s_2 s_3 (s_3^2 + 1) + (s_3^2 - 1)^2 = 0 \big\}\). 
We consider the most singular fiber at \((2, 2, 1)\):
\[
\bM_{\PV}^\mathrm{un} = \{ xyz + x^2 + y^2 - 4x - 4y - z + 6 = 0 \}.
\]
There is an \(A_3\) singularity at \( (x, y, z) = (1, 1, 2)\). 

We take the toric model \(X_{\PV}=\bA^1_u \times \bP^1_v\), and blow up once at \( (u, v) = (0, -1)\), and twice at each of the points \( (u, v) = (-1, 0)\) and \( (u, v) = (-1, \infty)\). Denote \(t=1/v\), the toric divisors by \(D_u=\{u=0\}\), \(D_v=\{v=0\}\), \(D_t=\{t=0\}\), their strict transforms by \(\widetilde D_u,\widetilde D_v,\widetilde D_t\), and the surface after the blowups by \(\widetilde X_{\PV}\). Then
\[
\cB_{\PV}
=
\widetilde X_{\PV}
\setminus
(\widetilde D_u\cup \widetilde D_v\cup \widetilde D_t).
\]

The rational functions
\[
x=-u,\qquad
y=-\frac{(u+1)^2+v}{uv},\qquad
z=-\frac{(u+1)^2(v^2+1)+2v(2u+1)}{u^2v}
\]
satisfy the equation of \(\bM_{\PV}^{\mathrm{un}}\), hence define a rational map
\begin{equation}\label{eq:resolution_PV}
    \cB_{\PV}\dashrightarrow \bM_{\PV}^{\mathrm{un}}.
\end{equation}

First, let us check regularity around \((u,v)=(0,-1)\). On the chart
\[
v+1=au,
\]
we have
\[
x=-u,\qquad
y=-\frac{a+u+2}{au-1},\qquad
z=-\frac{a^2u^2+2a^2u+a^2-2au+2}{au-1}.
\]
There are no poles around the exceptional divisor \(E_u=\{u=0\}\). Under the map~\eqref{eq:resolution_PV}, \(E_u\) is mapped to the curve
\[
\{x=0,\ z=y^2-4y+6\}.
\]
Indeed, on \(E_u\), we have \((x,y,z)=(0,a+2,a^2+2)\).

On the other chart
\[
u=b(v+1),
\]
we have
\[
x=-b(v+1),\qquad
y=-\frac{b^2v+b^2+2b+1}{bv}, \qquad
z=-\frac{b^2v^2+b^2+2bv+2b+1}{b^2v}.
\]
The only possible pole lies along \(b=0\), which is the strict transform \(\widetilde D_u\), hence removed.

Next, consider the blowups at \((-1,0)\). After the first blowup, on the chart
\[
u+1=av,
\]
we have
\[
x=1-av,\qquad
y=-\frac{a^2v+1}{av-1},\qquad
z=-\frac{a^2v^3+a^2v+4av-2}{(av-1)^2}.
\]
There are no poles around the exceptional divisor \(E_{v,1}=\{v=0\}\). Under the map~\eqref{eq:resolution_PV}, \(E_{v,1}\) is sent to the point \((1,1,2)\).

On the other chart
\[
v=b(u+1),
\]
we have
\[
x=-u,\qquad
y=-\frac{b+u+1}{bu},
\]
and
\[
z=-\frac{b^2u^3+3b^2u^2+3b^2u+b^2+4bu+2b+u+1}{bu^2}.
\]
Then we perform the second blowup at \((b,u)=(0,-1)\). On the chart
\[
u+1=db,
\]
we have
\[
x=1-db,\qquad
y=-\frac{d+1}{bd-1},\qquad
z=-\frac{b^4d^3+4bd+d-2}{(bd-1)^2}.
\]
The exceptional divisor \(E_{v,2}\) is given by \(\{b=0\}\). The map~\eqref{eq:resolution_PV} sends \(E_{v,2}\)  to the line
\[
\{x=1,\ y+z=3\}.
\]

On the other chart
\[
b=c(u+1),
\]
we have
\[
x=-u,\qquad
y=-\frac{c+1}{cu},
\]
and
\[
z=-\frac{c^2u^4+4c^2u^3+6c^2u^2+4c^2u+c^2+4cu+2c+1}{cu^2}.
\]
Since \(u\) is invertible near \(u=-1\), the only possible pole lies along \(c=0\), which is the strict transform \(\widetilde D_v\), hence removed.

The calculation near \((-1,\infty)\) follows from the calculation near \((-1,0)\) by the involution \(v\mapsto v^{-1}\). This involution preserves \(x\) and \(z\), and sends \(y\) to \(4-xz-y\). It exchanges the two centers \((-1,0)\) and \((-1,\infty)\). Hence the first exceptional divisor over \((-1,\infty)\), denoted by \(E_{t,1}\), is also contracted to \((1,1,2)\), while the second exceptional divisor \(E_{t,2}\) maps isomorphically onto the line
\[
\{x=1,\ y=1\}.
\]
Potential poles lie along \(\widetilde D_t\), hence are removed. Therefore~\eqref{eq:resolution_PV} is a morphism.

Now we show that the map~\eqref{eq:resolution_PV} is surjective. Away from the exceptional loci, we have the inverse rational map
\[
u=-x,\qquad
v=\frac{(1-x)^2}{xy-1}.
\]
If \(x=0\), then the defining equation gives \(z=y^2-4y+6\), and this curve is covered by \(E_u\). If \(x=1\), then the defining equation gives
\[
(y-1)(y+z-3)=0.
\]
The line \(\{x=1,\ y+z=3\}\) is covered by \(E_{v,2}\), and the line \(\{x=1,\ y=1\}\) is covered by \(E_{t,2}\). Hence~\eqref{eq:resolution_PV} is surjective.

Let \(L=\{u=-1\}\), and denote its strict transform by \(\widetilde L\). Along \(\widetilde L\), we have \(x=1\), \(y=1\), and \(z=2\). Thus \(\widetilde L\) is contracted to the singular point \((1,1,2)\). The strict transforms of \(E_{v,1}\) and \(E_{t,1}\) are also contracted to the same point. These three curves are \((-2)\)-curves: the curve \(L\) has self-intersection \(0\) before the blowups and passes through the two points \((-1,0)\) and \((-1,\infty)\), while \(E_{v,1}\) and \(E_{t,1}\) are the first exceptional divisors blown up once more. Moreover, they form the \(A_3\)-chain
\[
E_{v,1}\;-\;\widetilde L\;-\;E_{t,1}.
\]
The local calculations above show that the morphism is injective away from these three curves. Therefore~\eqref{eq:resolution_PV} is the minimal resolution of \(\bM_{\PV}^{\mathrm{un}}\).

\subsection{Painlevé VI} We have
\begin{align*}
\bM_{\PVI}
&\cong \Bigl\{
xyz + x^2 + y^2 + z^2
- s_1 x - s_2 y - s_3 z + s_4 = 0 \\
&\qquad \mid\;
s_1 = a_1 a_4 + a_2 a_3,\;
s_2 = a_2 a_4 + a_1 a_3,\;
s_3 = a_3 a_4 + a_1 a_2, \\
&\qquad\qquad
s_4 = a_1 a_2 a_3 a_4
+ a_1^2 + a_2^2 + a_3^2 + a_4^2 - 4,\;
a_i \in \mathbb{C}
\Bigr\}.
\end{align*}
We consider the fiber over \( (a_1, a_2, a_3, a_4 ) = (2, 2, 2, 2)\).
It is the most singular fiber, and also corresponds to unipotent monodromies around each puncture (see \cite[section 9]{Inaba-Iwasaki-Saito}, for example). The defining equation becomes:
\[
\{ x^2 + y^2 + z^2 + xyz - 8x - 8y - 8z + 28 = 0\} 
\]
By replacing \(x, y, z\) by \(x+2, y+2, z+2\), we get
\begin{equation}\label{eq:singular-PVI}
 \bM_{\PVI}^\un \cong \{ (x+y+z)^2 + xyz = 0\}.
\end{equation}
There is a \(D_4\) singularity at the origin. We take the toric model to be \(X_{\PVI} = \bP^2_{u_1, u_2, u_3}\). Here the subscript means that we write the coordinates  as \([u_1: u_2: u_3]\).

The boundary divisors are 
\[
D_i = \{ u_i = 0 \}, \quad i = 1, 2, 3.
\]
We blow up twice at the points
\[
p_1=[0:1:-1]\in D_1,\qquad 
p_2=[1:0:-1]\in D_2,\qquad 
p_3=[1:-1:0]\in D_3.
\]
Note that these three points pass through a line \(L = \{ u_1 + u_2 + u_3 = 0\}\). 

More precisely, 
we first blow up at the points \(\{p_i\}\), 
and denote the exceptional divisors by \(E_i\), and the strict transform of \(D_i\) by \(D_i'\). Then we
further blow up the points \(\{D_i'\cap E_i\}\).
Let \(\widetilde X_\PVI\) be the surface after the blowups, \(\widetilde  D_\PVI  = \cup_i \widetilde D_i \) the strict transform of the toric boundary divisor, then
\[
\cB_\PVI = \widetilde X_\PVI \setminus \widetilde D_\PVI. 
\]

Consider the rational functions on \(\cB_\PVI\):
\begin{equation}\label{eq:regular_functions_for_PVI}
    x = -\frac{(u_1+u_2+u_3)^2}{u_2 u_3}, \qquad y =- \frac{(u_1+u_2+u_3)^2}{u_1 u_3}, \qquad z = -\frac{(u_1+u_2+u_3)^2}{u_1 u_2}. 
\end{equation}
They satisfy the equation in~\eqref{eq:singular-PVI}, hence define a map
\begin{equation}\label{eq:resolution of PVI}
 \cB_\PVI \dashrightarrow \bM_{\PVI}^\mathrm{un}. 
\end{equation}

We check  regularity near \(p_1=[0:1:-1]\); the other two points are cyclically identical.
We write the regular functions in~\eqref{eq:regular_functions_for_PVI}
in coordinates \( (u_1, u_3)\) on the affine chart \(u_2=1\):
\[
x= - \frac{(1+u_1 + u_3)^2}{u_3},\qquad
y= - \frac{(1+u_1 + u_3)^2}{u_1 u_3},\qquad
z= - \frac{(1+u_1 + u_3)^2}{u_1}.
\]
After the first blowup at \( (u_1, u_3) = ( 0, -1)\), take two local charts: 
\begin{align*}
    &(a, u_3), \quad \text{where} \quad u_1 = a (u_3 + 1), \\
    &(b, u_1), \quad \text{where} \quad bu_1 = u_3 + 1.
\end{align*}
On the second chart, we have 
\[
x = -\frac{ u_1^2 (1 + b)^2}{ b u_1  -1  }, \qquad
y = - \frac{u_1 (1 + b)^2}{  b u_1 -1}, \qquad
z = - u_1 (1 + b)^2.
\]
There are no poles near \( (b, u_1) = (0, 0) \). So we only need to consider the chart \(u_1 = a (u_3 + 1)\). The divisor \(D_1'= \{ a = 0\} \), and 
\[
x = -\frac{(a+1)^2 (u_3 + 1)^2}{u_3}, \qquad
y = - \frac{ (a+1)^2 (u_3 +1) }{a u_3}, \qquad
z = - \frac{ (a+1)^2 (u_3 +1) }{a }. 
\]
Then we take the second blowup at the point \( (a, u_3) = (0, -1)\). Similarly, one can check that the functions \(x, y, z\) are regular on the chart \(ad = u_3 + 1\). Hence we only need to consider the chart 
\( a = (u_3 + 1) c\), where
\[
x = -\frac{(c(u_3+1) +1)^2 (u_3 + 1)^2}{u_3}, \qquad
y = -\frac{ ((u_3 + 1) c +1)^2 }{c u_3}, \qquad
z = -\frac{ ((u_3 + 1) c+1)^2 }{c}. 
\]
The only poles lie along \( \widetilde D_1 = \{ c = 0\} \), which are removed from \(\cB_\PVI\).  Hence we have shown that the functions \(x, y, z\) are indeed regular. 

We check \eqref{eq:resolution of PVI} is surjective. Consider \( (x, y, z) \in \bM_{\PVI}^\un \). If \(xyz\neq 0\), then this point lies in the image of the open torus of \(\cB_\PVI\). If \(x=0\) and \(yz\neq 0\), then the defining equation implies \(y+z=0\), and this point lies in the image of the second exceptional divisor over \(p_1\). The cases \(y=0\) and \(z=0\) are similar. Finally, the origin \((0,0,0)\) lies in the image of \(\widetilde L \) and \(E_i\), where \(\widetilde L\) is the stric transform of \(L\).

One can also see from the above local coordinates that the map~\eqref{eq:resolution of PVI} contracts the curves \(\widetilde L\) and \(E_i\), and is injective away from them. 
We also have
\[
\widetilde L^2=1-3=-2,\qquad E_i^2=-2,
\]
and \(\widetilde L\) meets each \(E_i\) transversely once. Hence they form the \(D_4\) Dynkin configuration. Therefore~\eqref{eq:resolution of PVI} is the minimal resolution.

%% file: sections/degenerations.tex

\section{Other Painlev\'e types from degenerations of \(\PVI\)}


\begin{proposition}
    The generic character variety of each Painlev\'e type can be obtained from the generic character variety of \(\PVI\) by removing Weinstein handles. More precisely, we have the following diagram, where each arrow means removing a Weinstein \(2\)-handle. 
    \begin{equation}\label{eq:degeneration}
    \begin{tikzcd}[column sep=1em, row sep=1em]
    && \PVdeg/\PIIIDsix \ar[r] \ar[rdd] & \PIIIDseven \ar[r] \ar[rdd] & \PIIIDeight \\
    \PVI \ar[r] & \PV \ar[ru] \ar[rd]\\
    && \PIV \ar[r] & \PII/\PIIFN \ar[r]& \PI
    \end{tikzcd}
    \end{equation}
    This diagram matches the confluence scheme for the Painlev\'e equations; see e.g. ~\cite{CMR}. 
    
\end{proposition}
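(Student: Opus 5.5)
The plan is to work entirely with the toric presentations of \zcref{fig:torus_skeleta}, which by \zcref{thm:torus_skeleta} present the generic character varieties of each type. There, every Painlev\'e Weinstein manifold is $\mathbf{A}_{\mathbf{\Sigma}}$: it is $T^*T^2$ with one $2$-handle attached along each component of $\Lambda^\infty_{\mathbf{\Sigma}}$. A ray $k_i v_i$ of the stacky fan in \zcref{fig:stacky-toric-models} contributes $k_i$ parallel circles $v_i^\perp\times(-v_i)$. Removing a Weinstein $2$-handle then just means deleting one of these circles, so that the subcollection of handles defines a Weinstein subdomain. This works because in $\mathbf{A}_{\mathbf{\Sigma}}$ all $2$-handles are attached along disjoint Legendrians in the boundary of the same sublevel set $T^*T^2$. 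After a small perturbation of the Morse function we may take the critical values of the omitted handles to be largest, so the sublevel set just below them is $\mathbf{A}_{\mathbf{\Sigma}'}$, where $\mathbf{\Sigma}'$ is obtained by lowering one $k_i$ by one (or deleting the ray when $k_i=1$).

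With this, each arrow in \eqref{eq:degeneration} reduces to a combinatorial check on the fans, up to the action of $GL_2(\bZ)$, which acts on $T^*T^2$ symplectically and carries FLTZ skeleta to FLTZ skeleta.
\begin{itemize}
\item $\PVI\to\PV$: lower $(-2,-2)$ to $(-1,-1)$; after applying $GL_2(\bZ)$ this should become $\{(0,2),(1,0),(0,-2)\}$.
\item $\PV\to\PIV$: drop one of the doubled vertical rays.
\item $\PV\to\PIIIDsix$: delete $(0,-2)$.
\item $\PIV\to\PII$: from $\{(0,2),(1,0),(0,-1)\}$ lower to $\{(0,1),(1,0),(0,-1)\}$. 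This does not immediately match $\{(0,1),(1,0),(-1,-1)\}$, so here I would instead delete $(0,-1)$ to get $\{(0,2),(1,0)\}$, which is $\PIIIDsix$ rather than $\PII$.
\end{itemize}
So a naive deletion may not realize every arrow. For those arrows I will combine deletion with a mutation of the curve configuration, as already used for $\PVI$ via \cite{STW}, or with the degeneration of parallel copies. The pattern to aim for is this: a ray $kv$ together with a ray $w$ is related by mutation to a configuration containing $v+w$, as in passing from $\PIIIDseven$'s $(1,1)$ to $\PII$'s $(-1,-1)$ via $GL_2$. I would enumerate for each arrow an explicit sequence of mutations and $GL_2(\bZ)$ moves followed by one deletion.

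A cross-check, and possibly an alternative route for the hard arrows, is to use the $S^3$ presentations of \zcref{fig:painleve_handlebodies}. There, removing a handle means deleting a link component, and one can compare, for example, the $(2,4)$ torus link for $\PII$ with the trefoil for $\PI$. This comparison is only up to the handle moves, so I would verify each link result by identifying the Legendrian link after deletion using the moves of \zcref{fig:slides_cancellations}. A B-side sanity check is also available: removing a handle should correspond to deleting one blown-up point (an exceptional curve) in \zcref{fig:B-side toric models}. Euler characteristics must drop by one, and the Dynkin types of the resolved singularities must degenerate compatibly ($D_4\to A_3\to A_2, A_1\oplus A_1\to A_1\to\emptyset$).

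The main obstacle is the arrows where the fans are not related by simply lowering a multiplicity, notably $\PIV\to\PII$ and $\PIIIDsix\to\PII$. For these I must find the right mutation, and check that deleting a handle commutes appropriately with it, i.e.\ that the deleted circle is disjoint from the mutating curve. Failing that, I would fall back on explicit Legendrian handle calculus in $S^3$ for these specific arrows.
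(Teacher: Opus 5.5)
\textbf{Verdict: genuine gap.} Your reduction is the paper's: work with the presentations $\mathbf{A}_{\mathbf{\Sigma}}$, realize removal of a $2$-handle as deletion of one circle, and use mutations and $\mathrm{GL}(2,\bZ)$ to match configurations. But the proposition's whole content is the explicit sequence of moves for each arrow, and there the proposal falls short.

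Two of the checks you do carry out are false:
\begin{itemize}
\item For $\PVI\to\PV$, the fan $\{(0,2),(2,0),(-1,-1)\}$ cannot be $\mathrm{GL}(2,\bZ)$-equivalent to $\{(0,2),(1,0),(0,-2)\}$. The multiplicity-two rays are linearly independent in the first and opposite in the second, and $\mathrm{GL}(2,\bZ)$ is linear.
\item For $\PV\to\PIIIDsix$, deleting the ray $(0,-2)$ removes \emph{two} handles: the Euler characteristic, which equals the number of circles, drops from $5$ to $3$. It leaves $\{(0,2),(1,0)\}$, which is not the $\PIIIDsix$ fan. Moreover, the single-circle deletions from the original $\PV$ fan give either $\PIV$ or $\{(0,2),(0,-2)\}$, and neither is $\mathrm{GL}(2,\bZ)$-equivalent to $\{(0,2),(2,0)\}$.
\end{itemize}
The arrows $\PIV\to\PII$, $\PIIIDsix\to\PIIIDseven$, $\PIIIDsix\to\PII$ and $\PIIIDseven\to\PIIIDeight$ also require mutations. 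For these you only say that you would find them.

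The missing step is to mutate the \emph{larger} configuration first, so that each arrow becomes a plain deletion.
\begin{itemize}
\item Mutate $\PV$ along the circle of its multiplicity-one ray $(1,0)$. This shears one of the doubled rays and flips $(1,0)$, giving $\{(2,0),(0,2),(-1,-1)\}$ up to $\mathrm{GL}(2,\bZ)$. This is visibly $\PVI$ minus one circle. Deleting $(-1,-1)$ from it gives $\PIIIDsix$. Deleting a $(0,1)$ circle gives $\{(2,0),(0,1),(-1,-1)\}$, a mutated form of $\PIV$; deleting a $(1,0)$ circle from that gives exactly the $\PII$ fan.
\item Mutate $\PIIIDsix$ along a horizontal circle and then a vertical one. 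Up to $\mathrm{GL}(2,\bZ)$ this gives $\{(-1,0),(0,1),(1,-1),(-1,1)\}$. Deleting $(1,-1)$ gives $\PIIIDseven$, and deleting $(-1,1)$ gives $\PII$.
\item Mutate $\PIIIDseven$ along its $(1,1)$ circle. Up to sign this gives $\{(1,0),(1,2),(-1,-1)\}$, and deleting $(-1,-1)$ gives $\PIIIDeight$.
\item The arrows into $\PI$ are plain deletions.
\end{itemize}
A mutation is a Weinstein deformation equivalence of the whole manifold, and the deletion is taken afterwards, so you need no statement about deletion commuting with mutation. Your proposed criterion, that the deleted circle be disjoint from the mutating curve, is in any case too strong. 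In $\PIV\to\PII$ the deleted circle does meet the mutating one, so that criterion would block a valid step.
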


\begin{proof}
    We use the mutations described in \cite{STW} and \( \mathrm{GL}(2, \bZ)\)-transformations to transform the presentations in Figure~\ref{fig:torus_skeleta}, from which the result is immediate: 
    
    For \(\bM_\PV^\gen\), consider:
    \[
    \begin{tikzpicture}[scale = .4]
    \draw[Orange] (0, 0) -- (6, 0) -- (6, 6) -- (0, 6) -- cycle;
    \draw[line width = 1pt] (1,0) -- (1, 6);
    \draw[line width = 1pt] (2, 0) -- (2, 6); 
    \draw[Purple, line width = 1pt] (0, 3) -- (6, 3);
    \draw[line width =1pt] (4, 0) -- (4, 6);
    \draw[line width = 1pt] (5, 0) -- (5, 6);

    \draw[Purple] (3,3) -- (3, 3.3);
    \draw (1, 1.5) -- (1.3, 1.5);
    \draw (1, 4.5) -- (1.3, 4.5);
    \draw (2, 1.5) -- (2.3, 1.5);
    \draw (2, 4.5) -- (2.3, 4.5);
    \draw (4, 1.5) -- (3.7, 1.5);
    \draw (4, 4.5) -- (3.7, 4.5);

    \draw (5, 1.5) -- (4.7, 1.5);
    \draw (5, 4.5) -- (4.7, 4.5);

    \draw[ -Stealth] (7, 3) -- (11, 3); 
    \node[anchor = south] at (9, 3) {\small \text{mutate along}};
    \node[anchor = north] at (9, 3) {\small \text{ \textcolor{Purple}{purple}}};

    \begin{scope} [shift = {(12, 0)}]
    \draw[Orange] (0, 0) -- (6, 0) -- (6, 6) -- (0, 6) -- cycle;
    \draw[line width = 1pt] (1,0) -- (1, 6);
    \draw[line width = 1pt] (2, 0) -- (2, 6); 
    \draw[Purple, line width = 1pt] (0, 3) -- (6, 3);
    
    \draw[line width = 1pt] (0, 1) -- (5, 6);
    \draw[line width = 1pt] (1, 0) -- (6, 5);
    \draw[line width = 1pt] (0, 5) -- (1, 6);
    \draw[line width = 1pt] (5, 0) -- (6, 1);

    \draw[Purple] (3,3) -- (3, 2.7);
    
    \draw (1, 1.5) -- (1.3, 1.5);
    \draw (1, 4.5) -- (1.3, 4.5);
    \draw (2, 1.5) -- (2.3, 1.5);
    \draw (2, 4.5) -- (2.3, 4.5);
    
    \draw (3.5, 4.5) -- (3.2, 4.8);
    \draw (5, 4) -- (4.7, 4.3);

    \draw[ -Stealth] (7, 3) -- (11, 3); 
    \node[anchor = south] at (9, 3) {\small \(\mathrm{GL}(2, \bZ)\)};
   
    \end{scope}

    \begin{scope} [shift = {(24, 0)}]
    \draw[Orange] (0, 0) -- (6, 0) -- (6, 6) -- (0, 6) -- cycle;
    \draw[line width = 1pt] (1,0) -- (1, 6);
    \draw[line width = 1pt] (2, 0) -- (2, 6); 
    \draw[Purple, line width = 1pt] (6, 0) -- (0, 6);
    
    \draw[line width = 1pt] (0, 1) -- (6, 1);
    \draw[line width = 1pt] (0, 2) -- (6, 2);

    \draw[Purple] (3, 3) -- (2.8, 2.8);

    \draw (1, 4.5) -- (1.3, 4.5);
    \draw (2, 4.5) -- (2.3, 4.5);
    
    \draw (4.5, 1) -- (4.5, 1.3);
    \draw (4.5, 2) -- (4.5, 2.3);

    \end{scope}
    \end{tikzpicture}.
    \]

    For \(\bM_\PIV^\gen\), consider:
    \[
    \begin{tikzpicture}[scale = .4]
    \draw[Orange] (0, 0) -- (6, 0) -- (6, 6) -- (0, 6) -- cycle;
    \draw[line width = 1pt] (1,0) -- (1, 6);
    \draw[line width = 1pt] (2, 0) -- (2, 6); 
    \draw[Purple, line width = 1pt] (0, 3) -- (6, 3);

    \draw[line width = 1pt] (5, 0) -- (5, 6);

    \draw[Purple] (3,3) -- (3, 3.3);
    \draw (1, 1.5) -- (1.3, 1.5);
    \draw (1, 4.5) -- (1.3, 4.5);
    \draw (2, 1.5) -- (2.3, 1.5);
    \draw (2, 4.5) -- (2.3, 4.5);

    \draw (5, 1.5) -- (4.7, 1.5);
    \draw (5, 4.5) -- (4.7, 4.5);

    \draw[ -Stealth] (7, 3) -- (11, 3); 
    \node[anchor = south] at (9, 3) {\small \text{mutate along}};
    \node[anchor = north] at (9, 3) {\small \text{ \textcolor{Purple}{purple}}};

    \begin{scope} [shift = {(12, 0)}]
    \draw[Orange] (0, 0) -- (6, 0) -- (6, 6) -- (0, 6) -- cycle;
    \draw[line width = 1pt] (1,0) -- (1, 6);
    \draw[line width = 1pt] (2, 0) -- (2, 6); 
    \draw[Purple, line width = 1pt] (0, 3) -- (6, 3);

    \draw[line width = 1pt] (1, 0) -- (6, 5);
    \draw[line width = 1pt] (0, 5) -- (1, 6);

    \draw[Purple] (3,3) -- (3, 2.7);
    
    \draw (1, 1.5) -- (1.3, 1.5);
    \draw (1, 4.5) -- (1.3, 4.5);
    \draw (2, 1.5) -- (2.3, 1.5);
    \draw (2, 4.5) -- (2.3, 4.5);

    \draw (5, 4) -- (4.7, 4.3);

     \draw[ -Stealth] (7, 3) -- (11, 3); 
    \node[anchor = south] at (9, 3) {\small \(\mathrm{GL}(2, \bZ)\)};
   
    \end{scope}

    \begin{scope} [shift = {(24, 0)}]
    \draw[Orange] (0, 0) -- (6, 0) -- (6, 6) -- (0, 6) -- cycle;
    \draw[line width = 1pt] (1,0) -- (1, 6);
    \draw[line width = 1pt] (2, 0) -- (2, 6); 
    \draw[Purple, line width = 1pt] (6, 0) -- (0, 6);
    
    \draw[line width = 1pt] (0, 1) -- (6, 1);

    \draw[Purple] (3, 3) -- (2.8, 2.8);

    \draw (1, 4.5) -- (1.3, 4.5);
    \draw (2, 4.5) -- (2.3, 4.5);
    
    \draw (4.5, 1) -- (4.5, 1.3);

    \end{scope}
    \end{tikzpicture}.
    \]

    For \(\PVdeg/\PIIIDsix\), consider
     \[
    \begin{tikzpicture}[scale = .4]
    \draw[Orange] (0, 0) -- (6, 0) -- (6, 6) -- (0, 6) -- cycle;
    
    \draw[line width = 1pt] (1,0) -- (1, 6);
    \draw (1, 4) -- (1.3, 4);

    \draw[line width = 1pt, blue] (2, 0) -- (2, 6);
    \draw[blue] (2, 4) -- (2.3, 4);
    
    \draw[Purple, line width = 1pt] (0, 2) -- (6, 2);
    \draw[Purple] (4, 2) -- (4, 2.3);

    \draw[line width = 1pt] (0, 1) -- (6, 1);
    \draw (4, 1) -- (4, 1.3);

    \draw[ -Stealth] (7, 3) -- (11, 3); 
    \node[anchor = south] at (9, 3) {\small \text{mutate along}};
    \node[anchor = north] at (9, 3) {\small \text{ \textcolor{Purple}{purple}}};

    \begin{scope} [shift = {(12, 0)}]
    \draw[Orange] (0, 0) -- (6, 0) -- (6, 6) -- (0, 6) -- cycle;
    
    \draw[line width = 1pt] (1,0) -- (1, 6);
    \draw (1, 4) -- (1.3, 4);

    \draw[line width = 1pt, blue] (2, 0) -- (2, 6);
    \draw[blue] (2, 4) -- (2.3, 4);
      
    \draw[Purple, line width = 1pt] (0, 2) -- (6, 2);
    \draw[Purple] (4, 2) -- (4, 1.7);

    \draw[line width = 1pt] (0, 1) -- (6, 1);
    \draw (4, 1) -- (4, 1.3);

    \draw[ -Stealth] (7, 3) -- (11, 3); 
    \node[anchor = south] at (9, 3) {\small \text{mutate along}};
    \node[anchor = north] at (9, 3) {\small \text{ \textcolor{blue}{blue}}};

    \end{scope}

     \begin{scope} [shift = {(24, 0)}]
    \draw[Orange] (0, 0) -- (6, 0) -- (6, 6) -- (0, 6) -- cycle;
    
    \draw[line width = 1pt] (1,0) -- (1, 6);
    \draw (1, 3) -- (1.3, 3);

    \draw[line width = 1pt, blue] (2, 0) -- (2, 6);
    \draw[blue] (2, 3) -- (1.7, 3);
      
    \draw[Purple, line width = 1pt] (0, 2) -- (6, 2);
    \draw[Purple] (3, 2) -- (3, 1.7);

    \draw[line width = 1pt] (0, 6) -- (6, 0);
    \draw (3, 3) -- (3.2, 3.2);

    \draw[ -Stealth] (7, 3) -- (11, 3); 
    \node[anchor = south] at (9, 3) {\small \(\mathrm{GL}(2, \bZ)\)};
    \end{scope}

    \begin{scope} [shift = {(36, 0)}]
    \draw[Orange] (0, 0) -- (6, 0) -- (6, 6) -- (0, 6) -- cycle;
    
    \draw[line width = 1pt] (4,0) -- (4, 6);
    \draw (4, 1) -- (4.3, 1);

    \draw[line width = 1pt, blue] (1, 0) -- (6, 5);
    \draw[line width = 1pt, blue] (0, 5) -- (1, 6);
    \draw[blue] (3.5, 2.5) -- (3.3, 2.7);
      
    \draw[Purple, line width = 1pt] (0, 2) -- (6, 2);
    \draw[Purple] (5, 2) -- (5, 1.7);

    \draw[line width = 1pt] (0, 1) -- (5, 6);
    \draw[line width = 1pt] (5, 0) -- (6, 1);     
    \draw (2.5, 3.5) -- (2.7, 3.3);
    \end{scope}

    \end{tikzpicture}.
    \]

    For \(\PIIIDseven\), consider 
    \[
    \begin{tikzpicture}[scale = .4]
    \draw[Orange] (0, 0) -- (6, 0) -- (6, 6) -- (0, 6) -- cycle;
    \draw[line width = 1pt] (1,0) -- (1, 6);
    
    \draw[Purple, line width = 1pt] (6, 0) -- (0, 6);
    
    \draw[line width = 1pt] (0, 1) -- (6, 1);

    \draw[Purple] (3, 3) -- (3.2, 3.2);

    \draw (1, 3) -- (1.3, 3);
    \draw (3, 1) -- (3, 1.3);

    \draw[ -Stealth] (8, 3) -- (12, 3); 
    \node[anchor = south] at (10, 3) {\small \text{mutate along}};
    \node[anchor = north] at (10, 3) {\small \text{ \textcolor{Purple}{purple}}};

    \begin{scope} [shift = {(14, 0)}]
    \draw[Orange] (0, 0) -- (6, 0) -- (6, 6) -- (0, 6) -- cycle;
    \draw[line width = 1pt] (1,0) -- (1, 6);
    \draw (1, 3) -- (1.3, 3);
    
    \draw[Purple, line width = 1pt] (6, 0) -- (0, 6);
    
    \draw[line width = 1pt] (0, 3) -- (6, 0);
    \draw[line width = 1pt] (0, 6) -- (6, 3); 

    \draw[Purple] (3, 3) -- (2.8, 2.8);

    \draw (3, 1.5) -- (3.15, 1.8);
    \draw (3, 4.5) -- (3.15, 4.8);

    \end{scope}
    \end{tikzpicture}.
    \]    
\end{proof}